\newtheorem{thm}{Theorem}[section]
\newtheorem{cor}[thm]{Corollary}
\newtheorem{lem}[thm]{Lemma}
\newtheorem{rmk}[thm]{Remark}
\newcommand{\Rr}{{\mathbb{R}}}
\newcommand{\Z}{{\mathbb{Z}}}
\newcommand{\cD}{{\mathcal D}}
\newcommand{\cX}{{\mathcal X}}
\newcommand{\ve}{\varepsilon}
\newcommand{\3}{\varepsilon}
\newcommand{\4}{\widetilde}
\begin{document}
\title{A new proof for the existence of rotationally symmetric gradient Ricci solitons}
\author{Shu-Yu Hsu\\
Department of Mathematics\\
National Chung Cheng University\\
168 University Road, Min-Hsiung\\
Chia-Yi 621, Taiwan, R.O.C.\\
e-mail: shuyu.sy@gmail.com}
\date{April 6, 2024}
\smallbreak \maketitle
\begin{abstract}
We give a new proof for the existence of  rotationally symmetric steady and expanding gradient Ricci solitons in dimension $n+1$, $2\le n\le 4$,  with metric $g=\frac{da^2}{h(a^2)}+a^2d\,\sigma$ for some function $h$ where $d\sigma$ is the standard metric on the unit sphere $S^n$ in $\mathbb{R}^n$. More precisely for any $\lambda\ge 0$, $2\le n\le 4$ and $\mu_1\in\mathbb{R}$, we prove the existence of unique solution $h\in C^2((0,\infty))\cap C^1([0,\infty))$ for the equation $2r^2h(r)h_{rr}(r)=(n-1)h(r)(h(r)-1)+rh_r(r)(rh_r(r)-\lambda r-(n-1))$, $h(r)>0$, in $(0,\infty)$ satisfying $h(0)=1$, $h_r(0)=\mu_1$.
We also prove the existence of unique analytic solution of the about equation on $[0,\infty)$ for any $\lambda\ge 0$, $n\ge 2$ and $\mu_1\in\mathbb{R}$. Moreover we will  prove the asymptotic behaviour of the solution $h$ for any $n\ge 2$, $\lambda\ge 0$  and $\mu_1\in\mathbb{R}\setminus\{0\}$.
\end{abstract}

\vskip 0.2truein

Keywords: singular elliptic equation, rotationally symmetric, Ricci steady solitons, Ricci expanding solitons

AMS 2020 Mathematics Subject Classification: Primary 35J70, 35J75 Secondary 53C21

\vskip 0.2truein
\setcounter{section}{0}

\section{Introduction}
\setcounter{equation}{0}
\setcounter{thm}{0}

Recently there is a lot of study on Ricci solitons by S.~Brendle,  R.L.~Bryant,  H.D.~Cao,   D.~Zhou,  M.~Feldman, T.~Ilmanen and D.~Knopf, Y.~Li and B.~Wang,  O.~Munteanu and N.~Sesum, P.~Petersen and W.~Wylie, \cite{B1}, \cite{Br},
\cite{C}, \cite{CZ}, \cite{FIK}, \cite{LW}, \cite{MS}, \cite{PW}, etc. Ricci solitons are used extensively in the study of Ricci flow. For example in \cite{B2} S.~Brendle used singular rotationally symmetric steady solitons to construct barrier functions which are used to prove a conjecture of Perelman on $3$-dimensional ancient $\kappa$ solution to the Ricci flow.

A Riemannian metric $g=(g_{ij})$ on a Riemannian manifold $M$  is said to be a gradient Ricci soliton if there exists a smooth function $f$ on $M$ and a constant $\lambda\in\mathbb{R}$ such that the Ricci tensor $R_{ij}$ of the metric $g$ satisfies
\begin{equation}\label{gradient-soliton-eqn}
R_{ij}=\nabla_i\nabla_jf-\lambda g_{ij}\quad\mbox{ on }M.
\end{equation}
The gradient soliton is called an expanding gradient Ricci soliton if $\lambda>0$. It is called a steady gradient  Ricci soliton if $\lambda=0$ and it is called a shrinking gradient  Ricci soliton if $\lambda<0$.

By an argument similar to R.L.~Bryant \cite{Br} for any $n\ge2$, if $(M,g)$ is a $(n+1)$-dimensional rotational symmetric gradient Ricci soliton which satisfies \eqref{gradient-soliton-eqn} for some smooth function $f$ and constant $\lambda\in\mathbb{R}$, then 
 \begin{equation}\label{g-rotational-form}
g=dt^2+a(t)^2d\sigma
\end{equation}
where $d\sigma$ is the standard metric on the unit sphere $S^n$ in $\mathbb{R}^n$, $a(t)$ satisfies
\begin{equation}\label{a-eqn}
a(t)^2a_t(t)a_{ttt}(t)=a(t)^2a_{tt}(t)^2-(n-1)a(t)a_{tt}(t)-\lambda a(t)^3a_{tt}(t)+a(t)a_t(t)^2a_{tt}(t)-(n-1)a_t(t)^2+(n-1)a_t(t)^4,
\end{equation}
and $f$ satisfies
\begin{equation*}
\left\{\begin{aligned}
a(t)f_{tt}(t)=&\lambda a(t)-na_{tt}(t)^2\\
a(t)a_{tt}(t)=&(n-1)(1-a_t(t)^2)-a(t)a_{t}(t)f_t(t)+\lambda a(t)^2.
\end{aligned}\right.
\end{equation*} 
As in \cite{Br}, by \eqref{a-eqn} if $a(0)=0$, one can choose $a_t(0)=1$. If one writes
\begin{equation}\label{g=h-form}
g=\frac{da^2}{h(a^2)}+a^2d\,\sigma
\end{equation}
where $h(r)$, $r=a^2$, and $a(t)$ satisfies
\begin{equation}\label{a-t-relation}
a_t(t)=\sqrt{h(a(t)^2)},
\end{equation}
then by \eqref{a-eqn} $h$ satisfies the singular elliptic equation
\begin{equation}\label{h-eqn}
2r^2h(r)h_{rr}(r)=(n-1)h(r)(h(r)-1)+rh_r(r)(rh_r(r)-\lambda r-(n-1)),\quad h(r)>0.
\end{equation}
Note that the radial sectional curvature and the orbital sectional curvature of $g$ is $-h_r(r)$ and   $(1-h(r))/r$ respectively. Hence the existence of gradient Ricci soliton is equivalent to the existence of solution of \eqref{h-eqn} and the positivity of the sectional curvatures of $g$ is equivalent to the condition that $h_r(r)<0$ and $0<h(r)<1$ holds for any $r>0$. 

In the paper \cite{Br} R.L.~Bryant by using power series method on \eqref{h-eqn} and phase plane analysis on the functions
\begin{equation}
x=a(t)^2,\quad y=a_t(t)^2, \quad z=a(t)a_{tt}(t),
\end{equation}
gave an incomplete proof of the existence of $3$-dimensional rotationally symmetric steady and expanding gradient  Ricci solitons. In this paper we will use fixed point argument to prove the existence of unique solution $h\in C^2((0,\infty))\cap C^1([0,\infty))$ of 
\begin{equation}\label{h-ode-initial-value-problem}
\left\{\begin{aligned}
&2r^2h(r)h_{rr}(r)=(n-1)h(r)(h(r)-1)+rh_r(r)(rh_r(r)-\lambda r-(n-1)),\quad h(r)>0, \quad\forall r>0\\
&h(0)=1,\quad h_r(0)=\mu_1
\end{aligned}\right.
\end{equation}
for any constant $\lambda\ge 0$, $\mu_1\in\mathbb{R}$   and integer $2\le n\le 4$. We will also use a modification of the power series method in \cite{Br} to proof the existence of unique analytic solutions of \eqref{h-ode-initial-value-problem} for any $n\ge 2$, $\lambda\ge 0$ and $\mu_1\in\Rr$. 
 
In \cite{Br} R.L.~Bryant by constructing explicit supersolutions of \eqref{h-eqn} proved that when  $\lambda>0$, $0<\mu_1<3\lambda/7$ and $n=2$, any solution $h$ of \eqref{h-eqn} which  satisfies the initial condition,
\begin{equation}\label{h-initial-condition}
h(0)=1\quad\mbox{ and }\quad h_r(0)=\mu_1,
\end{equation}
satisfies $h_r(r)>0$ for any $r\ge 0$ and $\underset{\substack{{r\to\infty}}}{\lim}\,h(r)\in (1,\infty)$ exists. However it is difficult to extend the method of \cite{Br} to the case $n\ge 2$ and $0<\mu_1<\lambda/n$. In this paper we will extend this result of \cite{Br} and prove the asymptotic behaviour of the solution $h$ of \eqref{h-ode-initial-value-problem} for any $n\ge 2$, $\lambda\ge 0$ and $\mu_1\in\Rr\setminus\{0\}$. The main results we obtain in this paper are the following.

\begin{thm}\label{h-existence-thm}
Let $\lambda\ge 0$, $\mu_1\in\mathbb{R}$ and $2\le n\le 4$. There exists a unique solution $h\in C^2((0,\infty))\cap C^1([0,\infty))$ of \eqref{h-ode-initial-value-problem}.
\end{thm}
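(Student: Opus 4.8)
The plan is to recast the singular initial value problem \eqref{h-ode-initial-value-problem} as a fixed point problem for an integral operator on a small interval $[0,\delta]$ and then continue the solution to $(0,\infty)$ by ODE theory. The difficulty is the factor $2r^2h(r)$ multiplying $h_{rr}$ in \eqref{h-eqn}: dividing by it produces a coefficient $\sim 1/r^2$ near $r=0$, so the point $r=0$ is a (doubly) singular point of the ODE and standard Picard--Lindel\"of does not apply directly. To handle this I would first determine the formal behaviour near $0$. Writing $h(r)=1+\mu_1 r+w(r)$ with $w(r)=o(r)$, substitute into \eqref{h-eqn} and collect terms: the linearization of the right-hand side about the constant solution $h\equiv 1$ should, after dividing by $r^2$, give an equation of Euler type for $w$, say $2w_{rr}=\frac{c}{r^2}w+(\text{lower order}+\text{forcing})$ for an explicit constant $c=c(n)$; the indicial roots of the homogeneous Euler equation determine which range of $n$ allows a $C^1$ solution with the prescribed data, and this is exactly where the restriction $2\le n\le 4$ enters (one root must be $\ge 2$ so that the $C^2$-up-to-higher-order correction is consistent with $h\in C^1([0,\infty))$ and the other branch, which would be too singular, is killed by the boundary condition $h(0)=1$, $h_r(0)=\mu_1$).

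Concretely, I would fix the ansatz $h(r)=1+\mu_1 r + r^2 v(r)$ (or possibly $1+\mu_1 r+r^\alpha v(r)$ with $\alpha$ the relevant indicial root), plug it into \eqref{h-eqn}, and solve algebraically for $h_{rr}$, i.e. for $v$ and its derivatives, obtaining an equation of the schematic form
\begin{equation*}
v(r)=T[v](r):=\frac{1}{\text{something}}\int_0^r\int_0^s F\bigl(\tau,v(\tau),v_\tau(\tau)\bigr)\,d\tau\,ds
\end{equation*}
where $F$ is continuous in all its arguments on $[0,\delta]\times B\times B'$ for small balls $B,B'$, using that $h=1+\mu_1 r+O(r^2)$ stays bounded away from $0$ on $[0,\delta]$ so that division by $2r^2h$ is legitimate after the cancellation of the explicit $r^2$. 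Then I would show $T$ is a contraction on a suitable closed ball of $C^1([0,\delta])$ for $\delta$ small (the $\int_0^r\int_0^s$ structure gives the needed smallness of the Lipschitz constant as $\delta\to 0$), hence has a unique fixed point; unwinding the substitution yields a unique $h\in C^2([0,\delta])$ solving \eqref{h-ode-initial-value-problem} with $h>0$ on $[0,\delta]$, and in particular $h\in C^1([0,\delta])$ with $h(0)=1$, $h_r(0)=\mu_1$.

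For $r\ge\delta$ the equation \eqref{h-eqn}, solved for $h_{rr}$, has locally Lipschitz coefficients wherever $h>0$, so the standard Picard--Lindel\"of theorem gives a unique maximal $C^2$ continuation. It remains to rule out that $h$ either blows up or reaches $0$ in finite "time"; for this I would derive a priori bounds: an upper bound on $h$ by comparing with an explicit supersolution (or a differential inequality for $h$ and $h_r$ showing at most exponential growth), and a lower bound $h>0$ by showing that $h$ cannot touch zero — if $h(r_0)=0$ with $h>0$ on $[\delta,r_0)$, evaluate \eqref{h-eqn} near $r_0$: the left side is $2r_0^2h h_{rr}\to 0$ while the right side tends to $r_0 h_r(r_0)(r_0h_r(r_0)-\lambda r_0-(n-1))$, forcing $h_r(r_0)\in\{0,\lambda+(n-1)/r_0\}$, and a short ODE-comparison/uniqueness argument near $r_0$ then gives a contradiction (a solution with $h(r_0)=0$, together with the equation, cannot have been positive just before). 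Combining the local existence near $0$, the continuation, and these two a priori bounds gives a solution on all of $(0,\infty)$ in $C^2((0,\infty))\cap C^1([0,\infty))$; uniqueness on $(0,\infty)$ follows from uniqueness of the fixed point near $0$ plus uniqueness of the Picard continuation. The main obstacle is the analysis at $r=0$: identifying the correct substitution that absorbs the $r^2$-singularity while respecting the two initial conditions, and verifying that this is possible precisely for $2\le n\le 4$.
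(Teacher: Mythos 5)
Your overall strategy (a fixed-point argument on a small interval $[0,\delta]$ to handle the singular point $r=0$, followed by Picard continuation supported by a priori bounds on $h$ and $h_r$) is the same as the paper's. But the local step, which is the heart of the matter, has a concrete problem. The restriction $2\le n\le 4$ does \emph{not} come from indicial roots: linearizing \eqref{h-eqn} about $h\equiv 1$ gives $2r^2\eta_{rr}+(n-1)r\eta_r-(n-1)\eta=0$, whose indicial polynomial factors as $(m-1)(2m+n-1)$, so the roots are $1$ and $-(n-1)/2$ for every $n\ge 2$; the negative root is killed by the $C^1$ condition at $0$ and no restriction on $n$ appears (consistently, Theorem \ref{h-existence-analytic-soln-thm} produces solutions for all $n\ge 2$). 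In the paper the restriction is an artifact of the contraction estimate: the operator is set up on the pair $(h,w)$ in the Lipschitz space $C^{0,1}$, the term $\frac{n-1}{s^2}\int_0^s(w(\rho)-w(s))\,d\rho$ contributes $\frac{n-1}{2}\|w_1-w_2\|_{1,[0,\3]}$ to the Lipschitz constant \emph{uniformly in} $\3$, and the resulting factor $\tfrac{25(n-1)}{99}$ is below $1$ only for $n\le 4$. Relatedly, your operator $T[v]=\frac{1}{\text{something}}\int_0^r\int_0^s F$ is not what the substitution actually produces: with $h=1+\mu_1r+r^2v$ the equation for $v$ still carries the non-negligible singular terms of order $v_r/r$ and $v/r^2$ with $O(1)$ coefficients (the Euler part $2r^2v_{rr}+8rv_r+(n+3)v=\dots$), so a plain iterated integral neither converges as written nor has a Lipschitz constant that shrinks with $\delta$; you would need to invert the full Euler operator (variation of parameters with its Green's kernel, using that its indicial roots have real part $-3/2$), or else control $h_r$ in a Lipschitz-type norm as the paper does. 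As sketched, the key smallness claim is unjustified.

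The continuation step also needs more than you give it. Your argument that $h$ cannot reach $0$ at some $r_0$ assumes the left side $2r_0^2hh_{rr}$ tends to $0$, but $h_{rr}$ may blow up as $h\to 0$, so no conclusion about $h_r(r_0)$ follows; the paper instead integrates the identity $(h^{-1/2}h_r)_r=(n-1)\frac{h-1}{2r^2h^{1/2}}-\frac{n-1+\lambda r}{2r}\frac{h_r}{h^{3/2}}$ to get the lower bound \eqref{h-derivative-integral-formula}, from which $\liminf h_r\ge\frac{n-1}{L}+\lambda>0$ near a putative zero, and then derives a contradiction via the monotonicity Lemma \ref{h-monotone-lem}. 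Likewise the upper bounds on $h$ and the two-sided bounds on $h_r$ (Lemmas \ref{h-upper-bd-lem}--\ref{h-derivative-lower-bd-lem}), all of which are needed to keep $(h,h_r)$ in a compact subset of the domain where the reduced ODE is Lipschitz, require genuine arguments (e.g.\ the differential inequality for $h_r/h$ and the nonlinear comparison $h_{rr}>h_r^2/(4h)$), not just the remark that one should ``compare with a supersolution.'' So the proposal identifies the right architecture but leaves the two decisive estimates --- the contraction near $r=0$ and the no-degeneracy bounds at a finite endpoint --- unproved, and misattributes the role of the hypothesis $2\le n\le 4$.
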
 

\begin{thm}\label{h-steady-soln-property-thm1}
Let $\lambda\ge 0$, $\mu_1\in\mathbb{R}$, $n\ge 2$ and $h\in C^2((0,\infty))\cap C^1([0,\infty))$ be  a solution  of
\eqref{h-ode-initial-value-problem}. Then 
\begin{equation}\label{hr-sign}
\left\{\begin{aligned}
&h(r)>1\quad\mbox{ and }\quad h_r(r)>0\quad\forall r>0\qquad\quad\mbox{ if }\mu_1>0\\
&0<h(r)<1\quad\mbox{ and }\quad h_r(r)<0\quad\forall r>0\quad\,\,\mbox{ if }\mu_1<0
\end{aligned}\right.
\end{equation}
\end{thm}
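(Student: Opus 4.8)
The plan is to argue by connectedness on $(0,\infty)$, running the two cases $\mu_1>0$ and $\mu_1<0$ in parallel; I describe $\mu_1>0$, the other being entirely symmetric. Recall $h$ is defined on all of $(0,\infty)$ by hypothesis, so there is no issue of finite‑time breakdown. Since $h(0)=1$ and $h_r$ is continuous on $[0,\infty)$ with $h_r(0)=\mu_1>0$, we have $h_r>0$ on some $[0,\delta)$, hence $h>1$ on $(0,\delta)$. Set
\[
R=\sup\{\rho>0:\ h(r)>1\ \text{and}\ h_r(r)>0\ \text{for all}\ r\in(0,\rho)\}\in(0,\infty],
\]
so $h>1$ and $h_r>0$ on $(0,R)$, and the goal is $R=\infty$. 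Assume $R<\infty$. Then by continuity $h(R)\ge 1$ and $h_r(R)\ge 0$, while maximality of $R$ forbids having both $h(R)>1$ and $h_r(R)>0$. Hence either (i) $h(R)=1$, or (ii) $h(R)>1$ and $h_r(R)=0$.

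Alternative (ii) is disposed of by substituting $r=R$, $h_r(R)=0$ into \eqref{h-ode-initial-value-problem}: the factor $rh_r(\cdots)$ drops out and one is left with $2R^2h(R)h_{rr}(R)=(n-1)h(R)(h(R)-1)>0$, so $h_{rr}(R)>0$. By continuity $h_{rr}>0$ on a left‑neighbourhood of $R$, so $h_r$ is strictly increasing there; since $h_r(R)=0$, this forces $h_r<0$ just to the left of $R$, contradicting $h_r>0$ on $(0,R)$.

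For alternative (i), the value $h(R)=1$ is a left‑limit of values $>1$, which forces $h_r(R)\le 0$; together with $h_r(R)\ge 0$ this gives $h_r(R)=0$. Now the constant function $1$ solves the differential equation in \eqref{h-ode-initial-value-problem} and has the same Cauchy data $(1,0)$ at $r=R$. Since $R>0$ and $h(R)=1>0$, near $R$ the equation in normal form $h_{rr}=\big[(n-1)h(h-1)+rh_r(rh_r-\lambda r-(n-1))\big]/(2r^2h)$ has right‑hand side smooth — in particular locally Lipschitz — in $(h,h_r)$, so the Picard--Lindel\"of theorem gives $h\equiv 1$ on a neighbourhood of $R$. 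Consequently the set $\{r>0:h(r)=1,\ h_r(r)=0\}$ is open (by this local uniqueness), closed (by continuity) and nonempty (it contains $R$), hence equals the connected set $(0,\infty)$; thus $h\equiv 1$, and since $h\in C^1([0,\infty))$, letting $r\to 0^+$ gives $\mu_1=h_r(0)=0$, a contradiction. Therefore $R=\infty$.

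The case $\mu_1<0$ is handled identically with $R=\sup\{\rho>0:h(r)<1\ \text{and}\ h_r(r)<0\ \text{on}\ (0,\rho)\}$ (here $h>0$ holds automatically, being built into the hypothesis that $h$ solves \eqref{h-ode-initial-value-problem}); in alternative (ii) one now has $0<h(R)<1$, so $(n-1)h(R)(h(R)-1)<0$ gives $h_{rr}(R)<0$ and the mirror‑image contradiction. In contrast to the explicit‑supersolution method, this needs no barriers and uses neither $n\le 4$ nor the sign of $\lambda$ — the $\lambda$‑term always appears multiplied by $h_r$, which vanishes at the critical point $R$ — consistent with the stated range $n\ge 2$, $\lambda\ge 0$. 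The one delicate point, where a naive approach fails, is alternative (i): there the equation degenerates so badly at $R$ (one checks $h_{rr}(R)=0$, and in fact every derivative of $h$ vanishes at $R$) that no sign of a derivative of $h$ produces a contradiction, and one must instead invoke uniqueness for the regular interior Cauchy problem and compare with the trivial solution.
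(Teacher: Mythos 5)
Your proof is correct and is essentially the paper's argument made self-contained: the paper deduces the theorem from Lemma 2.2 (the Bryant-type monotonicity lemma, quoted from \cite{Br} without proof), whose content is exactly your alternative (ii) — at a first zero of $h_r$ the equation forces $h_{rr}(R)$ to have the sign incompatible with $h_r$ vanishing there from above (resp.\ below). Note that your alternative (i) is actually vacuous, since $h_r>0$ on $(0,R)$ gives $h(R)=1+\int_0^R h_r(\rho)\,d\rho>1$ directly, so the Picard--Lindel\"of comparison with the constant solution $h\equiv 1$ is never needed (though it is harmless and correctly executed).
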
 

\begin{thm}\label{h-lambda-neg-existence-thm}
Let $\lambda<0$, $2\le n\le 4$ and $\mu_1\in\mathbb{R}$. Then there exists a constant $L\ge -(n-1)/\lambda$ and a unique solution $h\in C^2((0,L))\cap C^1([0,L))$ of \eqref{h-eqn} in $(0,L)$ which satisfies \eqref{h-initial-condition}.
\end{thm}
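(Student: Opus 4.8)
The plan is to rerun the construction behind Theorem~\ref{h-existence-thm}, noting that the only feature of $\lambda$ used there which fails when $\lambda<0$ is the negativity of the coefficient $-\lambda r-(n-1)$ appearing in \eqref{h-eqn}, and that this negativity does persist on the interval $(0,-(n-1)/\lambda)$. First I would treat local existence near $r=0$. For $r>0$, $h>0$ equation \eqref{h-eqn} is equivalent to the regular second order equation
\[
h_{rr}=F(r,h,h_r):=\frac{(n-1)h(h-1)+rh_r\bigl(rh_r-\lambda r-(n-1)\bigr)}{2r^2h},
\]
whose behaviour at the singular endpoint $r=0$ under the data \eqref{h-initial-condition} is exactly the one analysed for \eqref{h-ode-initial-value-problem}, and there the sign of $\lambda$ plays no role; hence the fixed point argument proving Theorem~\ref{h-existence-thm} produces, for some $\delta>0$, a unique $h\in C^2((0,\delta))\cap C^1([0,\delta))$ solving \eqref{h-eqn} on $(0,\delta)$ with $h>0$ and \eqref{h-initial-condition}. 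Since $F$ is real analytic on $\{r>0,\ h>0\}$, standard ODE theory continues this to a maximal interval $(0,L)$ with $h\in C^\infty((0,L))$ and $L\in(\delta,\infty]$, and (using monotonicity of $h$, established next) if $L<\infty$ then at $r=L$ either $h(r)\to0$ or $|h(r)|+|h_r(r)|\to\infty$. Uniqueness of $h$ on $[0,L)$ follows from uniqueness near $0$ together with uniqueness for $h_{rr}=F(r,h,h_r)$ on $\{r>0,\ h>0\}$, which also shows $L$ is well defined.

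Next I would fix the sign of $h_r$ on $(0,L)$, which is precisely the argument proving Theorem~\ref{h-steady-soln-property-thm1} and never uses $\lambda\ge0$. If $\mu_1=0$ then $h\equiv1$ solves the problem and $L=\infty$. If $\mu_1\ne0$ and $r_0\in(0,L)$ were the first zero of $h_r$, then $h$ is strictly monotone on $(0,r_0)$, so $h(r_0)<1$ when $\mu_1<0$ (resp.\ $h(r_0)>1$ when $\mu_1>0$); evaluating \eqref{h-eqn} at $r_0$ gives $2r_0^2h(r_0)h_{rr}(r_0)=(n-1)h(r_0)(h(r_0)-1)$, hence $h_{rr}(r_0)<0$ (resp.\ $>0$), which contradicts $h_r$ attaining its first zero at $r_0$ from below (resp.\ from above). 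Thus on $(0,L)$ one has $0<h<1$ and $h_r<0$ if $\mu_1<0$, and $h>1$ and $h_r>0$ if $\mu_1>0$.

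It remains to prove $L\ge-(n-1)/\lambda$, and this is the heart of the matter. Suppose for contradiction that $L<-(n-1)/\lambda$, so $c(r):=-\lambda r-(n-1)<0$ for all $r\in(0,L]$. With $q:=rh_r$, $q(0)=0$, equation \eqref{h-eqn} becomes $q_r=\tfrac{1}{2rh}\bigl(q^2+q(2h+c)+(n-1)h(h-1)\bigr)$, and here the hypotheses $2\le n\le4$ and $c<0$ enter exactly as in the a priori estimates for Theorem~\ref{h-existence-thm}: for $\mu_1<0$ one checks that once $q\le-C_0(n)$ the bracket is positive so $q_r>0$, yielding a uniform bound $-C_0(n)\le q<0$ on $(0,L)$; for $\mu_1>0$ the same quadratic bracket forces $h$ and $q$ to grow at most polynomially, excluding blow-up on the finite interval $(0,L)$. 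In either case $|h_r|=|q|/r$ is bounded on $[\delta,L]$ and $h$ is bounded on $[0,L]$, so by the dichotomy of the first paragraph the only remaining way $L$ can be finite is $h(r)\to0$ as $r\to L^-$, which can occur only when $\mu_1<0$. To rule this out I would analyse \eqref{h-eqn} as $r\to L^-$: since $q$ stays bounded while $h\to0$, the leading order balance forces $q(r)\to0$, after which a short expansion of \eqref{h-eqn} near $r=L$ shows the surviving contribution $q\,c(r)$ must vanish to leading order, i.e.\ $c(L)=0$, contradicting $L<-(n-1)/\lambda$. Hence $\inf_{[0,L]}h>0$, every continuation failure at $L$ is excluded, and therefore $L\ge-(n-1)/\lambda$. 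The main obstacle is precisely this last step: securing a uniform bound on $q=rh_r$ compatible with $n\le4$ and, above all, excluding the collapse $h\to0$ before $r$ reaches $-(n-1)/\lambda$; both hinge on the sign $c(r)<0$ on $(0,-(n-1)/\lambda)$ and are where the argument genuinely departs from the treatment of \eqref{h-ode-initial-value-problem}.
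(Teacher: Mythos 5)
Your overall skeleton matches the paper's: local existence at $r=0$ by the fixed point argument of Lemma \ref{h-local-existence-lem} (which is indeed stated for all $\lambda\in\mathbb{R}$), the sign of $h_r$ from Lemma \ref{h-monotone-lem}, and continuation up to $-(n-1)/\lambda$ via a priori bounds on $h$ and $h_r$ on $[L/2,L]$ for any $L<-(n-1)/\lambda$. The gap is in the a priori bounds themselves, which you only sketch. The step you yourself flag as the heart of the matter --- excluding $h\to 0$ at some finite $L<-(n-1)/\lambda$ --- is carried out in the paper (Lemma \ref{h-lower-bd-lem}) by rewriting \eqref{h-eqn} as $(h^{-1/2}h_r)_r=(n-1)\frac{h-1}{2r^2h^{1/2}}-\frac{n-1+\lambda r}{2r}\cdot\frac{h_r}{h^{3/2}}$, integrating and integrating by parts to obtain the exact identity \eqref{h-derivative-integral-formula0}; from it, $h(r_i)\to 0$ forces $\liminf_{i\to\infty} h_r(r_i)\ge\frac{n-1}{L}+\lambda>0$, which contradicts $h_r<0$ (the case $\mu_1<0$) or, in general, the three-case analysis of that lemma. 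Your substitute --- ``the leading order balance forces $q\to 0$'' followed by ``a short expansion shows $c(L)=0$'' --- is not a proof: a formal matching argument only excludes the particular ans\"atze you test, and nothing in your sketch controls the rate at which $h$ approaches $0$ relative to $q=rh_r$, which is precisely what the integral identity supplies. (Your uniform bound $q\ge -C_0(n)$ for $\mu_1<0$ is correct, but it only gives $h_r\ge -C_0/r$ and so does not by itself prevent $h$ from reaching $0$ at finite $L$.)

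Second, for $\mu_1>0$ the assertion that ``the same quadratic bracket forces $h$ and $q$ to grow at most polynomially'' is unsupported, and this is where the real danger lies: \eqref{h-eqn2} contains the Riccati-type term $h_r^2/(2h)$, so a priori $h_r$ could blow up in finite time even while $h$ stays bounded. The paper needs two separate lemmas here (Lemma \ref{h-upper-bd-lem} for $h$, and Lemma \ref{h-derivative-upper-bd-lem} for $h_r$, the latter via a bootstrap showing that $h_{rr}>h_r^2/(4h)$ propagates once $h_r$ exceeds an explicit threshold, and then comparing $h_r/h$ to reach a contradiction). A minor point of attribution: the restriction $2\le n\le 4$ does not enter the a priori estimates at all (Lemmas \ref{h-lower-bd-lem}--\ref{h-derivative-lower-bd-lem} hold for every $n\ge 2$); it is inherited from the local existence Lemma \ref{h-local-existence-lem}. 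So your route is the right one, but the two decisive estimates --- the positive lower bound on $h$ and the upper bound on $h_r$ when $\mu_1>0$ --- are asserted rather than proved, and the tool the paper uses for the first of these (the integral representation of $h_r$) is absent from your argument.
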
 

\begin{thm}\label{h-existence-analytic-soln-thm}
Let $\lambda\ge 0$, $\mu_1\in\mathbb{R}$ and $n\ge 2$. Then there exists a unique analytic solution $h$ of \eqref{h-ode-initial-value-problem} on $[0,\infty)$. 
\end{thm}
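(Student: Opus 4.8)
The plan is to construct $h$ as a power series about $r=0$ with prescribed $h(0)=1$, $h_r(0)=\mu_1$, prove it converges near $0$, and then continue it analytically over all of $[0,\infty)$. Writing $h(r)=\sum_{k\ge0}a_kr^k$ with $a_0=1$, $a_1=\mu_1$, substituting into the equation of \eqref{h-ode-initial-value-problem} and equating coefficients of $r^k$, the convolution identities give, for every $k\ge2$, a relation of the form
\[
(k-1)\,(2k+n-1)\,a_k \;=\; P_k\bigl(a_1,\dots,a_{k-1}\bigr),
\]
where $P_k$ is an explicit polynomial whose monomials are products $a_ia_j$ with $i+j=k$, $1\le i,j\le k-1$, carrying coefficients of size $O(k^2)$ (the largest arising from the term $2r^2hh_{rr}$), plus the single term $(k-1)(a_1-\lambda)a_{k-1}$ which carries the only occurrence of $\lambda$; the contributions containing $a_0=1$ are precisely those that produced the factor $(k-1)(2k+n-1)$ on the left, so $a_0$ itself does not appear in $P_k$. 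Since $(k-1)(2k+n-1)>0$ for all $k\ge2$ and all $n\ge2$, each $a_k$ with $k\ge2$ is determined uniquely by $a_1,\dots,a_{k-1}$, while the orders $k=0,1$ impose nothing beyond $a_0=1$, $a_1=\mu_1$. In particular an analytic solution, if it exists, is unique, its Taylor coefficients at $0$ being forced.

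The technical heart is to show the formal series has positive radius of convergence. I would prove by induction on $k\ge1$ an estimate $|a_k|\le MR^k(k+1)^{-2}$ for suitable constants $M,R>0$ depending only on $n,\lambda,\mu_1$: dividing by $(k-1)(2k+n-1)\sim2k^2$ absorbs the $O(k^2)$ weights in $P_k$, the convolution $\sum_{j=1}^{k-1}(j+1)^{-2}(k-j+1)^{-2}$ is again $O((k+1)^{-2})$, and, crucially, because $a_0$ is absent from $P_k$ the induction feeds only on the indices $1,\dots,k-1$; choosing $M$ small enough and then $R$ large enough closes the recursion, the base case $k=1$ being merely $|\mu_1|\le MR/4$. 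This yields $r_0>0$ and an analytic solution $h$ of \eqref{h-ode-initial-value-problem} on $[0,r_0)$, with $h>0$ there since $h(0)=1$.

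For the continuation: when $r>0$ the equation reads $h_{rr}=\Phi(r,h,h_r)$ with $\Phi$ real-analytic on $\{r>0,\ h>0\}$, so the local solution extends uniquely and analytically to a maximal interval $[0,R)$, and if $R<\infty$ then, as $r\uparrow R$, either $h\to0$ or $|h|+|h_r|\to\infty$. To exclude both I would rerun on $[0,R)$ the differential-inequality argument behind Theorem~\ref{h-steady-soln-property-thm1} (which presupposes no global existence), obtaining $h\equiv1$ if $\mu_1=0$, $0<h<1$ and $h_r<0$ if $\mu_1<0$, and $h>1$ and $h_r>0$ if $\mu_1>0$; together with a Gr\"onwall-type bound read off from the equation on $[r_0,R]$ (where $r$ stays bounded away from $0$), this rules out a zero of $h$ and blow-up of $(h,h_r)$ at a finite $R$, hence $R=\infty$. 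Uniqueness on $[0,\infty)$ then follows from the first paragraph together with uniqueness for the regular ODE on $r>0$; and when $2\le n\le4$ this argument also shows that the solution of Theorem~\ref{h-existence-thm} is analytic.

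I expect the convergence estimate to be the main obstacle: pinning down a summable majorant and checking that the induction genuinely absorbs the $O(k^2)$ weights against the $O(k^2)$ gain from the indicial factor, while carrying along the dependence on $\lambda$ and $\mu_1$. The continuation step should be comparatively routine once the sign and monotonicity statements of Theorem~\ref{h-steady-soln-property-thm1} are available on the maximal interval.
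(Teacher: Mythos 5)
Your local construction is essentially the paper's (Lemma \ref{h-local-existence-analytic-soln-lem}): the same power-series recurrence \eqref{ci-recurrence-relation} with indicial factor $(k-1)(2k+n-1)$, the same observation that the quadratic weights are $O(k^2)$ and hence comparable to that factor, and the same formal-uniqueness remark. The only real difference is the convergence device: you close a weighted induction with the ansatz $|a_k|\le MR^k(k+1)^{-2}$, while the paper dominates the coefficients by the sequence $c_k'=|\lambda|c_{k-1}'+\tfrac52\sum_{j=1}^{k-1}c_j'c_{k-j}'$, whose generating function solves an explicit quadratic and is given in closed form by \eqref{b-explicit-form}. Both devices work; yours avoids solving for the majorant explicitly at the price of tracking the convolution constant.

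The gap is in the continuation step. A ``Gr\"onwall-type bound read off from the equation'' on $[r_0,R]$ does not by itself exclude the two failure modes, because the right-hand side of $h_{rr}=\Phi(r,h,h_r)$ is quadratic in $h_r$ (the term $h_r^2/2h$) and singular as $h\to 0$; a generic Gr\"onwall estimate for the first-order system can neither rule out finite-$r$ blow-up of $h_r$ nor rule out $h$ decreasing to $0$ at $R$ with bounded derivative. The paper devotes four lemmas to precisely this point (Lemmas \ref{h-lower-bd-lem}--\ref{h-derivative-lower-bd-lem}), and the indispensable tool is the exact integral identity \eqref{h-derivative-integral-formula0}, obtained by integrating $(h^{-1/2}h_r)_r$, which gives
$h_r(r)>\frac{n-1}{r}+\lambda+\sqrt{h(r)/h(r_1)}\,\bigl(h_r(r_1)-\frac{n-1}{r_1}-\lambda\bigr)$:
this forces $h_r>0$ wherever $h$ is small, and hence (against $h_r<0$ when $\mu_1<0$) keeps $h$ bounded away from $0$ on compact subintervals; the derivative bounds then exploit the Riccati structure (once $h_r$ is large the inequality $h_{rr}>h_r^2/4h$ persists, and $(h_r/h)_r\le (n-1)(h-1)/2r^2h$ controls $\log h$). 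The sign and monotonicity information you import from Theorem \ref{h-steady-soln-property-thm1} shortens these arguments but does not replace them; to make your continuation step rigorous you need to invoke \eqref{h-derivative-integral-formula0} (equivalently, Lemmas \ref{h-lower-bd-lem}--\ref{h-derivative-lower-bd-lem}) explicitly rather than a generic Gr\"onwall bound. Everything else in the proposal is sound.
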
 

\begin{thm}\label{steady-soln-asymptotic-behaviour-thm}
Let $\mu_1<0$ and $n\ge 2$.  Suppose $h\in C^2((0,\infty))\cap C^1([0,\infty))$ is  a solution  of
\begin{equation}\label{h-steady-soliton-eqn}
\left\{\begin{aligned}
&2r^2h(r)h_{rr}(r)=(n-1)h(r)(h(r)-1)+rh_r(r)(rh_r(r)-(n-1)),\quad h(r)>0, \quad\forall r>0\\
&h(0)=1,\quad h_r(0)=\mu_1.
\end{aligned}\right.
\end{equation} 
Then 
\begin{equation}\label{h-infty=0}
\lim_{r\to\infty}h(r)=0.
\end{equation}
\end{thm}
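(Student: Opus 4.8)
Since $\mu_1<0$, Theorem \ref{h-steady-soln-property-thm1} (with $\lambda=0$) gives $0<h(r)<1$ and $h_r(r)<0$ for all $r>0$, so $h$ is strictly decreasing and $\ell:=\lim_{r\to\infty}h(r)$ exists with $0\le\ell<1$; it remains to rule out $\ell>0$. The plan is to produce a first integral of \eqref{h-steady-soliton-eqn}, read off its (positive) value from the initial data, and use it together with the sign information above to force $-rh_r(r)$ to grow like $\sqrt r$, which is incompatible with $h>0$.

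The first step is the identity
\begin{equation}\label{plan-first-integral}
F(r):=-2nh_r(r)+\frac{n(n-1)(1-h(r))}{r}+\frac{\left((n-1)(1-h(r))-rh_r(r)\right)^2}{rh(r)}\equiv C_0,\qquad r>0 .
\end{equation}
This is nothing but $R_g+|\nabla f|^2\equiv\mathrm{const}$ along the steady soliton: for the metric \eqref{g=h-form} one has $R_g=-2nh_r+n(n-1)(1-h)/r$ from the sectional curvatures recorded in the Introduction, while the $f$--equations of the Introduction with $\lambda=0$ and $aa_{tt}=rh_r$ give $\sqrt{rh}\,f_t=(n-1)(1-h)-rh_r$, hence $|\nabla f|^2=f_t^2=\left((n-1)(1-h)-rh_r\right)^2/(rh)$; alternatively \eqref{plan-first-integral} is checked directly by differentiating $F$ and using \eqref{h-steady-soliton-eqn}. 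Letting $r\to0^+$ and using $h(0)=1$, $h_r(0)=\mu_1$ (so $(1-h(r))/r\to-\mu_1$ and the last term of $F$ is $O(r)$) identifies $C_0=-2n\mu_1-n(n-1)\mu_1=-n(n+1)\mu_1>0$.

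Now suppose, for contradiction, $\ell>0$, and put $\phi(r):=-rh_r(r)>0$; note that each summand of $F$ in \eqref{plan-first-integral} is nonnegative and that they add up to the positive constant $C_0$. I claim $\liminf_{r\to\infty}\phi(r)/\sqrt r>0$. If not, there is a sequence $r_k\to\infty$ with $\phi(r_k)/\sqrt{r_k}\to0$; since $0<(n-1)(1-h)<n-1$ is bounded and $h(r_k)\ge\ell$, the last term of $F(r_k)$ equals $\left(O(1)+o(\sqrt{r_k})\right)^2/(r_kh(r_k))=o(1)$ and the middle term is $O(1/r_k)=o(1)$, so \eqref{plan-first-integral} forces $-2nh_r(r_k)\to C_0$, i.e. $\phi(r_k)/r_k=-h_r(r_k)\to C_0/(2n)>0$, whence $\phi(r_k)/\sqrt{r_k}\to\infty$ --- a contradiction. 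Hence there are $c>0$ and $R_0>0$ with $\phi(r)\ge c\sqrt r$, i.e. $-h_r(r)\ge c\,r^{-1/2}$, for $r\ge R_0$; integrating from $R_0$ to $r$ gives $h(r)\le h(R_0)-2c(\sqrt r-\sqrt{R_0})\to-\infty$, contradicting $h(r)>0$. Therefore $\ell=0$, i.e. \eqref{h-infty=0} holds.

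The crux is step \eqref{plan-first-integral}: one must spot this first integral and pin down its value $C_0=-n(n+1)\mu_1$, either via the soliton potential $f$ and the classical identity $R+|\nabla f|^2=\mathrm{const}$ for steady gradient Ricci solitons, or by a direct --- somewhat tedious --- differentiation using \eqref{h-steady-soliton-eqn}; a minor accompanying point is justifying that $F(r)$ has the asserted limit as $r\to0^+$, which uses only $h\in C^1([0,\infty))$ and the prescribed initial data. Once \eqref{plan-first-integral} is in hand the rest is the elementary dichotomy above, the positivity $C_0>0$ (equivalently $\mu_1<0$) being exactly what drives the contradiction in the first alternative.
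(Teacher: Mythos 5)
Your proof is correct, but it follows a genuinely different route from the paper's. The paper works with $q(r)=rh_r(r)/h(r)$, derives the Riccati-type equation $q_r=-H(q(r),r)/(2r)$, and, assuming $c_0=\lim_{r\to\infty}h(r)>0$, runs a trapping argument to show $q(r)\le -C$ for large $r$; integrating $rh_r/h\le -C$ then gives $h(r)\lesssim r^{-C}\to 0$, a contradiction. You instead exhibit the first integral \eqref{plan-first-integral} --- the ODE incarnation of Hamilton's identity $R+|\nabla f|^2=\mathrm{const}$ for steady solitons --- pin down its value $C_0=-n(n+1)\mu_1>0$ from the initial data, and derive the contradiction from the forced $\sqrt r$-growth of $-rh_r$. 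I checked the crux directly: differentiating $F$ and substituting $h_{rr}$ from \eqref{h-steady-soliton-eqn}, the expression $r^2h(r)^2F_r(r)$ collapses to zero after writing $A=(n-1)(h-1)$, $s=rh_r$ (the three groups of terms each vanish identically), so \eqref{plan-first-integral} is a genuine conserved quantity and the limit computation at $r\to 0^+$ giving $C_0$ is justified by $h\in C^1([0,\infty))$. One caution: if you lean on the geometric derivation ($R_g+|\nabla f|^2=\mathrm{const}$) rather than the direct differentiation, you implicitly need the reconstructed potential $f$ to satisfy \emph{both} $f$-equations of the Introduction, an equivalence the paper asserts but does not spell out; the direct computation avoids this and should be the one recorded. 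Your approach buys a global structural identity that immediately yields quantitative bounds (e.g. $-2nh_r\le C_0$ and $(rh_r)^2\le C_0\,rh(r)+\dots$) and a shorter contradiction; the paper's approach is more pedestrian but its function $q$ and equation \eqref{qr-eqn10} are reused in the proofs of Theorems \ref{h-steady-soln-asymptotic-behaviour2-thm} and \ref{steady-soln-asymptotic-hr-behaviour-thm}, so it integrates better with the rest of Section 4.
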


\begin{thm}\label{h-steady-soln-asymptotic-behaviour2-thm}
Let $\mu_1<0$ and $n\ge 2$. Suppose $h\in C^2((0,\infty))\cap C^1([0,\infty))$ is  a solution  of \eqref{h-steady-soliton-eqn}.  Then 
\begin{equation}\label{q-limit=-1}
\lim_{r\to\infty}\frac{rh_r(r)}{h(r)}=-1.
\end{equation}
\end{thm}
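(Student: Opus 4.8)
The plan is to study the logarithmic derivative $q(r):=rh_r(r)/h(r)$, so that the conclusion \eqref{q-limit=-1} is precisely $q(r)\to -1$ as $r\to\infty$. By Theorem~\ref{h-steady-soln-property-thm1} (applied with $\mu_1<0$) one has $0<h<1$ and $h_r<0$ on $(0,\infty)$, hence $q<0$, so that $u:=1+q<1$ on $(0,\infty)$; and by Theorem~\ref{steady-soln-asymptotic-behaviour-thm}, $h(r)\to 0$ as $r\to\infty$. Differentiating $q$ and using \eqref{h-steady-soliton-eqn} (after writing $F:=rh_r$, $r^2h_{rr}=rF_r-F$), one obtains the singular ODE
\[
rq_r=q-\tfrac{1}{2}q^2+\tfrac{n-1}{2}-\tfrac{n-1}{2}\cdot\frac{1+q}{h},\qquad\text{i.e.}\qquad ru_r=2u-\tfrac{1}{2}u^2+\tfrac{n-4}{2}-\tfrac{n-1}{2}\cdot\frac{u}{h}.
\]
Because $h\to 0$, the term $-\tfrac{n-1}{2}\,u/h$ acts as a strong restoring force driving $u$ to $0$; the task is to turn this into a proof without assuming any a priori lower bound on $q$.

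The key extra ingredient is the auxiliary quantity $G:=rh_r+h=(rh)_r=uh$, which (again straight from \eqref{h-steady-soliton-eqn}, substituting $F=rh_r$ and then $G=F+h$) satisfies
\[
2rhG_r=G^2+2hG+(n-4)h^2-(n-1)G .
\]
Two consequences will be used: whenever $G>0$, the bound $u<1$ forces $0<G<h$, so $G^2+2hG+(n-4)h^2=O(h^2)$; and whenever $G<0$ the equation reads $2rhG_r=|G|^2+(n-1-2h)|G|+(n-4)h^2$, which for $h$ small is bounded below by a positive multiple of $|G|$.

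With these two identities I would prove $\limsup_{r\to\infty}q\le -1$ and $\liminf_{r\to\infty}q\ge -1$ separately; the two arguments are mirror images, so I only sketch the first. Suppose $\limsup_{r\to\infty}u>0$ and fix $\delta\in(0,\tfrac{1}{2})$ with $\delta<\tfrac{1}{2}\limsup u$. If $u(r)>\delta$ for all large $r$, then $0<G=uh<h$ and, once $h$ is small, the $G$-equation gives $2rhG_r\le (3+|n-4|)h^2-(n-1)\delta h<-\tfrac{1}{2}(n-1)\delta h$, so $G_r\le -\tfrac{(n-1)\delta}{4r}$ and $G(r)\to-\infty$, contradicting $G>0$. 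Otherwise $u$ drops back to $\delta$ at arbitrarily large radii, producing arbitrarily large $r_*$ with $u(r_*)=\delta$ and $u_r(r_*)\ge 0$; but the $u$-equation gives $r_*u_r(r_*)=2\delta-\tfrac{1}{2}\delta^2+\tfrac{n-4}{2}-\tfrac{n-1}{2}\,\delta/h(r_*)$, which is negative once $h(r_*)$ is small (hence for $r_*$ large), a contradiction. For $\liminf u\ge 0$ the symmetric argument uses, in the ``$u<-\delta$ for all large $r$'' case, that $G=uh<-\delta h<0$ and hence $2rhG_r\ge \tfrac{1}{2}(n-1)\delta h>0$ for $h$ small (only $|G|>\delta h$ is needed, so no upper bound on $|u|$ is required), forcing $G(r)\to+\infty$ against $G<0$; the other case again contradicts the sign of $u_r$ dictated by the singular term. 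Combining the two conclusions gives $u(r)\to 0$, i.e. $rh_r/h\to -1$.

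The main obstacle — and the reason the companion equation for $G=(rh)_r$ is introduced — is ruling out that $q$ stays bounded away from $-1$ from below (equivalently $u$ bounded away from $0$ from below); in that regime $u$ might a priori be unbounded, so the singular $u$-equation does not obviously give $u_r>0$, whereas the $G$-equation does force $G_r>0$ using only $G<-\delta h$. A minor point is that Theorems~\ref{h-steady-soln-property-thm1} and~\ref{steady-soln-asymptotic-behaviour-thm} are used only qualitatively (through $0<h<1$, $h_r<0$, $h\to 0$), so no quantitative decay rate for $h$ is needed.
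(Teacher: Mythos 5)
Your proposal is correct, and it reaches \eqref{q-limit=-1} by a genuinely different route than the paper, even though it is built from the same two identities: your equation for $q$ is \eqref{qr-eqn10} rewritten, and your equation for $G=(rh)_r$ is exactly \eqref{u-eqn}. You close the argument with a direct two-sided first-crossing (barrier) argument on $u=1+q$: the alternative in which $u$ stays on one side of the barrier $\pm\delta$ is killed by the sign of $G_r$ forced by \eqref{u-eqn} together with $0<G<h$ (resp.\ $G<-\delta h$), and the alternative in which $u$ keeps returning to the barrier is killed by the sign of $u_r$ at the last crossing, where the singular term $-\tfrac{n-1}{2}\,u/h$ dominates because $h\to 0$ by Theorem \ref{steady-soln-asymptotic-behaviour-thm}. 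The paper instead first establishes a uniform bound $-C<q<0$ through the chain Lemma \ref{ur-positive-negative-lem}, Lemma \ref{q-bded-lem}, Lemma \ref{ur-bded-lem} and Lemma \ref{q-bded-lem2} (the last of which is itself a crossing argument, at the level $q=-32$), and then extracts the limit from the integral representation \eqref{q-integral-eqn} of $q$ with the integrating factor $F$ of \eqref{F-defn}, applying l'Hospital's rule along arbitrary subsequences. Your version is shorter and more self-contained for this particular theorem; in particular, your observation that in the regime $G<-\delta h$ the identity $2rhG_r=|G|^2+(n-1-2h)|G|+(n-4)h^2$ requires no upper bound on $|u|$ neatly sidesteps the a priori possibility $q\to-\infty$, which is precisely what the paper's Lemma \ref{q-bded-lem2} is designed to exclude. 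What the paper's longer route buys in exchange is that its intermediate facts (the monotonicity alternative for $(rh)_r$ and the decay $(rh)_r\to 0$) are reused afterwards in the proofs of Theorem \ref{steady-soln-asymptotic-hr-behaviour-thm} and Lemma \ref{w1-limit-lem}, so they are not wasted work in the paper's overall scheme.
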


\begin{thm}\label{steady-soln-asymptotic-hr-behaviour-thm}
Let $\mu_1<0$ and $n\ge 2$.  Suppose $h\in C^2((0,\infty))\cap C^1([0,\infty))$ is  a solution  of \eqref{h-steady-soliton-eqn}.  Then
\begin{equation}\label{r-h-infty-limit}
b_1:=\lim_{r\to\infty}rh(r)\in [0,\infty)\quad\mbox{ exists}
\end{equation}
with $b_1>0$ when $n\ge 4$. Moreover
\begin{equation}\label{h-2nd-order-limit} 
\lim_{r\to\infty}r\left(\frac{rh_r(r)}{h(r)}+1\right)=\left(\frac{n-4}{n-1}\right)b_1.
\end{equation}
\end{thm}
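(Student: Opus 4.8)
The plan is to pass to the logarithmic derivative $q(r):=rh_r(r)/h(r)$, its shift $p(r):=1+q(r)$, and the normalized ratio $v(r):=p(r)/h(r)-(n-4)/(n-1)$. By Theorems~\ref{steady-soln-asymptotic-behaviour-thm} and \ref{h-steady-soln-asymptotic-behaviour2-thm} we already know $h(r)\to 0$ and $q(r)\to -1$ as $r\to\infty$, and by Theorem~\ref{h-steady-soln-property-thm1} (applied with $\lambda=0$) the hypothesis $\mu_1<0$ gives $0<h(r)<1$ for all $r>0$. Substituting $h_r=qh/r$ into \eqref{h-steady-soliton-eqn} produces the first order equation
\begin{equation*}
2rh(r)q_r(r)=(n-1)\bigl(h(r)-1-q(r)\bigr)+h(r)q(r)\bigl(2-q(r)\bigr),
\end{equation*}
equivalently, in the variable $p$,
\begin{equation*}
2rh(r)p_r(r)=(n-4)h(r)-(n-1)p(r)+h(r)p(r)\bigl(4-p(r)\bigr).
\end{equation*}
Dividing the $p$-equation by $h$ and solving for $p/h$ gives $v=\tfrac{1}{n-1}\bigl(-2rp_r+p(4-p)\bigr)$; inserting this back, together with $v_r=p_r/h-pq/(rh)$ and $4-p=3-q$, rewrites the equation as the linear first order ODE
\begin{equation*}
rv_r(r)=-A(r)v(r)+B(r),\qquad A(r)=\frac{n-1}{2h(r)}-\frac{3\bigl(1-q(r)\bigr)}{2},\quad B(r)=\frac{3\bigl(1-q(r)\bigr)(n-4)}{2(n-1)}.
\end{equation*}

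The key step is to prove $v(r)\to 0$. Since $h(r)\to 0^+$ and $q(r)\to -1$, the damping coefficient satisfies $A(r)\to +\infty$, while $M:=\sup_{r\ge r_0}|B(r)|<\infty$. Choosing $r_0$ so that $A>0$ on $[r_0,\infty)$ and using the integrating factor $\Phi(r):=\exp\int_{r_0}^r A(s)s^{-1}\,ds$ — which tends to $\infty$ because $A(s)\to\infty$ forces $\int^\infty A(s)s^{-1}\,ds=\infty$ — one obtains $v(r)=\Phi(r)^{-1}\bigl(v(r_0)+\int_{r_0}^r\Phi(s)B(s)s^{-1}\,ds\bigr)$. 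Writing $\Phi(s)/s=\Phi'(s)/A(s)$ and splitting the last integral at a point beyond which $A\ge 1/\delta$ then yields $\limsup_{r\to\infty}|v(r)|\le M\delta$ for every $\delta>0$, hence $v(r)\to 0$. In particular $\lim_{r\to\infty}\bigl(rh_r(r)/h(r)+1\bigr)/h(r)=(n-4)/(n-1)$.

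Now set $u(r):=rh(r)$, so $u_r=h(1+q)=hp=\bigl(\tfrac{n-4}{n-1}+v(r)\bigr)u^2/r^2$; thus $u_r/u^2=\bigl(\tfrac{n-4}{n-1}+v(r)\bigr)r^{-2}$ is absolutely integrable on $[r_0,\infty)$, and integrating shows that $1/u(r)$ converges to the finite, nonnegative limit $\ell:=u(r_0)^{-1}-\int_{r_0}^\infty\bigl(\tfrac{n-4}{n-1}+v(s)\bigr)s^{-2}\,ds$. To see that $\ell>0$ — which gives $b_1=\lim_{r\to\infty}u(r)=1/\ell\in(0,\infty)$, hence \eqref{r-h-infty-limit} with $b_1>0$ — fix $\varepsilon\in(0,\tfrac{3}{n-1})$ and then $r_0$ with $|v(s)|<\varepsilon$ on $[r_0,\infty)$; since $\int_{r_0}^\infty\bigl(\tfrac{n-4}{n-1}+v(s)\bigr)s^{-2}\,ds\le\bigl(\tfrac{n-4}{n-1}+\varepsilon\bigr)r_0^{-1}$ and $u(r_0)^{-1}=r_0^{-1}h(r_0)^{-1}$,
\begin{equation*}
\ell\ \ge\ r_0^{-1}\Bigl(h(r_0)^{-1}-\tfrac{n-4}{n-1}-\varepsilon\Bigr)\ >\ r_0^{-1}\Bigl(1-\tfrac{n-4}{n-1}-\varepsilon\Bigr)\ =\ r_0^{-1}\Bigl(\tfrac{3}{n-1}-\varepsilon\Bigr)\ >\ 0,
\end{equation*}
where the middle inequality uses $h(r_0)<1$ and the last uses $\varepsilon<\tfrac{3}{n-1}$. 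Finally \eqref{h-2nd-order-limit} follows from the two limits already established, since $r\bigl(rh_r(r)/h(r)+1\bigr)=rp(r)=u(r)\bigl(\tfrac{n-4}{n-1}+v(r)\bigr)\to b_1\cdot\tfrac{n-4}{n-1}$.

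The main obstacle is the second step: one has to spot that, expressed through $v=p/h-(n-4)/(n-1)$, the equation becomes a linear first order ODE whose damping coefficient $A$ blows up like $1/h$ — this is exactly what forces $v\to 0$ despite the non-vanishing inhomogeneity $B$. A secondary delicate point is excluding $b_1=\infty$; there the a priori bound $0<h<1$ from Theorem~\ref{h-steady-soln-property-thm1}, together with $\frac{n-4}{n-1}<1$, is precisely what makes the estimate $\ell>0$ close.
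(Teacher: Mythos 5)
Your proof is correct, and while its analytic core coincides with the paper's, the route to \eqref{r-h-infty-limit} is genuinely different and in fact yields a stronger conclusion. Your linear equation $rv_r=-Av+B$ is, after the substitution $u_r=h(1+q)$, exactly the equation the paper derives for $w=u_r/h^2$ in Lemma \ref{w1-limit-lem} (the coefficient $\frac{n-1-2h+4qh-u_r}{2h}$ there equals your $A=\frac{n-1}{2h}-\frac{3(1-q)}{2}$), and your integrating-factor estimate showing $v\to 0$ plays the role of that lemma; your version is cleaner in that the term the paper writes as $-u_r/(2h)$ becomes $-(1+q)/2$, so you need only Theorems \ref{steady-soln-asymptotic-behaviour-thm} and \ref{h-steady-soln-asymptotic-behaviour2-thm} as inputs and can dispense with the separate fact $u_r\to 0$ supplied by Lemmas \ref{ur-positive-negative-lem} and \ref{ur-bded-lem}. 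The real divergence is in deducing \eqref{r-h-infty-limit}: the paper splits into cases according to the sign of $u_r$ (Lemma \ref{ur-positive-negative-lem}) and treats $2\le n\le 4$ and $n\ge 4$ by separate arguments (Lemmas \ref{rh-limit-lem0} and \ref{rh-limit-lem}), concluding only $b_1\ge 0$ when $n<4$; you instead integrate $(1/u)_r=-u_r/u^2=-\left(\frac{n-4}{n-1}+v\right)r^{-2}$, whose absolute integrability immediately gives convergence of $1/u$ to some $\ell\ge 0$, and the bound $\ell\ge r_0^{-1}\left(h(r_0)^{-1}-\frac{n-4}{n-1}-\ve\right)>0$ (using $0<h<1$ from Theorem \ref{h-steady-soln-property-thm1} and $1-\frac{n-4}{n-1}=\frac{3}{n-1}>0$) shows $b_1=1/\ell\in(0,\infty)$ for \emph{every} $n\ge 2$, not just $n\ge 4$ --- a strengthening consistent with, but not claimed by, the theorem. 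The final limit \eqref{h-2nd-order-limit} is then obtained exactly as in the paper's ``alternate proof'', via $r(q+1)=u\cdot(u_r/h^2)$. I checked your first-order equations for $q$ and $p$ and the formulas for $A$ and $B$ against \eqref{h-steady-soliton-eqn}; they are correct.
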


\begin{thm}\label{expanding-soln-asymptotic-hr-behaviour-thm}
Let $\lambda>0$, $\mu_1\in\mathbb{R}\setminus\{0\}$ and $n\ge 2$.  Suppose $h\in C^2((0,\infty))\cap C^1([0,\infty))$ is  a solution  of \eqref{h-ode-initial-value-problem}. Then the following holds.

\begin{enumerate}[(i)] 

\item If $\mu_1=\lambda/n$, then 
\begin{equation}\label{h-uniform-upper-lower-bd31}
h(r)=1+\frac{\lambda}{n}r\quad\forall r\ge 0.
\end{equation}
is the explicit analytic solution of \eqref{h-ode-initial-value-problem}.

\item If $0<\mu_1<\lambda/n$, then  $\underset{\substack{{r\to\infty}}}{\lim}\,h_r(r)=0$ and $\underset{\substack{{r\to\infty}}}{\lim}\,h(r)\in (1,\infty)$ exists and
\begin{equation}\label{h-uniform-upper-lower-bd32}
1<h(r)\le 1+\mu_1r\quad\forall r>0.
\end{equation}

\item If $\mu_1>\lambda/n$,  then 
\begin{equation}\label{h-hr-limit-at-infty}
\lim_{r\to\infty}h_r(r)=\lim_{r\to\infty}h(r)=\infty
\end{equation}
 and
\begin{equation}\label{h-uniform-upper-lower-bd33}
h(r)\ge 1+\mu_1r\quad\forall r>0.
\end{equation}

\item If $\mu_1<0$, then $\underset{\substack{{r\to\infty}}}{\lim}\,h_r(r)=0$ and $\underset{\substack{{r\to\infty}}}{\lim}\,h(r)\in (0,1)$.
\end{enumerate}

\end{thm}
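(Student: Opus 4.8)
\textbf{Proof proposal for Theorem \ref{expanding-soln-asymptotic-hr-behaviour-thm}.}
Part (i) is immediate: substituting $h(r)=1+\frac{\lambda}{n}r$ into \eqref{h-eqn} gives both sides equal to $\mu_1(n\mu_1-\lambda)r^2=0$, and $h(0)=1$, $h_r(0)=\lambda/n$; since $h$ is analytic, Theorem \ref{h-existence-analytic-soln-thm} forces any solution of \eqref{h-ode-initial-value-problem} with $\mu_1=\lambda/n$ to be this one. For the rest I would first record two facts. (a) A standard bootstrapping for this singular ODE (the power-series argument behind Theorem \ref{h-existence-analytic-soln-thm}) shows $h$ is $C^2$ at $0$ and, matching $r^2$-coefficients in \eqref{h-eqn}, $h_{rr}(0)=2\mu_2$ with $\mu_2=\frac{\mu_1(n\mu_1-\lambda)}{n+3}$; in particular in cases (ii)--(iv) one has $\mu_2\neq0$ and $\operatorname{sign}h_{rr}(0)=\operatorname{sign}\big(\mu_1(n\mu_1-\lambda)\big)$. (b) By Theorem \ref{h-steady-soln-property-thm1}, $h>1,\ h_r>0$ on $(0,\infty)$ when $\mu_1>0$, and $0<h<1,\ h_r<0$ on $(0,\infty)$ when $\mu_1<0$.

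The core step is to show that in cases (ii)--(iv) $h_{rr}$ never vanishes on $(0,\infty)$, hence keeps the constant sign $\operatorname{sign}(\mu_2)$. Suppose, for contradiction, that $r_0>0$ is the smallest zero of $h_{rr}$. Differentiating \eqref{h-eqn} and evaluating at $r_0$ (all terms carrying the factor $h_{rr}(r_0)$ drop out), after collecting terms one obtains the identity
\begin{equation*}
2r_0^2h(r_0)\,h_{rrr}(r_0)=2h_r(r_0)\Big[(n-1)\big(h(r_0)-1\big)-r_0\big(\lambda-h_r(r_0)\big)\Big].
\end{equation*}
On $(0,r_0)$, $h_{rr}$ has the constant sign $\operatorname{sign}(\mu_2)$, so $h_r$ is strictly monotone there (starting from $\mu_1$) and $h$ lies strictly between $1$ and $1+\mu_1r$. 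Feeding these one-sided bounds and the sign data of (b) into the identity, and using $(n-1)\mu_1-(\lambda-\mu_1)=n\mu_1-\lambda$ together with the hypothesis on $\mu_1$ relative to $\lambda/n$, the bracket is forced to have a definite sign making $h_{rrr}(r_0)<0$ when $\mu_2<0$ and $h_{rrr}(r_0)>0$ when $\mu_2>0$; this contradicts the sign of $h_{rrr}(r_0)$ dictated by $r_0$ being the first zero. Hence $h_{rr}<0$ on $(0,\infty)$ in case (ii) and $h_{rr}>0$ on $(0,\infty)$ in cases (iii) and (iv).

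Now the conclusions follow. Since $h_r$ is strictly monotone, $\ell:=\lim_{r\to\infty}h_r(r)$ exists. In case (iii), $h_r>\mu_1$, so $h>1+\mu_1r\to\infty$, giving \eqref{h-uniform-upper-lower-bd33}; if $\ell<\infty$ then $h\sim\ell r$ and \eqref{h-eqn} forces $h_{rr}(r)\sim\frac{n\ell-\lambda}{2r}$, which is $>0$ since $\ell>\mu_1>\lambda/n$, hence $h_r\to\infty$, a contradiction; so $\ell=\infty$, proving \eqref{h-hr-limit-at-infty}. In case (ii), $h_r<\mu_1$ gives $1<h<1+\mu_1r$, i.e.\ \eqref{h-uniform-upper-lower-bd32}; moreover $\ell=0$, since $\ell>0$ would give $h\sim\ell r$ with $0<\ell<\lambda/n$, whence $h_{rr}(r)\sim\frac{n\ell-\lambda}{2r}<0$ and $h_r\to-\infty$, contradicting $h_r>0$. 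In case (iv), $\ell=0$, since $\ell<0$ would give $h(r)<1+\ell r\to-\infty$, contradicting $h>0$. Thus in cases (ii) and (iv), $h$ converges monotonically to a limit $h_\infty$, lying in $[1,\infty]$ and $[0,1)$ respectively.

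It remains to locate $h_\infty$ away from the endpoints. For case (iv) ($h_\infty\in[0,1)$): rewrite \eqref{h-eqn} as $h_{rr}=\frac{(n-1)(h-1)}{2r^2}+\frac{h_r^2}{2h}-\frac{\lambda h_r}{2h}-\frac{(n-1)h_r}{2rh}$. If $h_\infty=0$, pick $r_*$ large, set $\varepsilon:=2h(r_*)$, so $0<h<\varepsilon$ on $[r_*,\infty)$; since $h_r<0$ and $\lambda>0$, the above gives $h_{rr}>-\frac{n-1}{2r^2}+\frac{\lambda|h_r|}{2\varepsilon}$ on $[r_*,\infty)$. Integrating over $[r_*,\infty)$ and using $\int_{r_*}^\infty h_{rr}=|h_r(r_*)|$ (from $h_r\to0$) and $\int_{r_*}^\infty|h_r|=h(r_*)$ (from $h_\infty=0$) yields $|h_r(r_*)|\geq\frac{\lambda}{4}-\frac{n-1}{2r_*}$, which contradicts $h_r\to0$ once $r_*$ is large; hence $h_\infty\in(0,1)$. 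For case (ii) ($h_\infty\in(1,\infty]$): the bound $h\leq1+\mu_1r$ with $h_r\to0$ forces $h=o(r)$; finiteness of $h_\infty$ should then come either from a bounded supersolution of \eqref{h-eqn} with the same $1$-jet at $0$ (extending Bryant's construction) or from the equation $q_r=\frac{n-(q-1)^2}{2r}-\frac{(n-1)(1+q)}{2rh}-\frac{\lambda q}{2h}$ for $q:=rh_r/h\in(0,1+\sqrt n\,]$, which shows $q\to0$ and hence that $h\to\infty$ is incompatible with $h_r\to0$; this gives $h_\infty\in(1,\infty)$. I expect the main obstacle to be exactly this last confinement of $h_\infty$ in cases (ii) and (iv): it is where the expanding term $-\lambda r^2h_r$ must genuinely be exploited, and while the case (iv) estimate above is short, case (ii) appears to need either a carefully chosen supersolution together with a comparison principle for the second-order equation \eqref{h-eqn}, or a somewhat delicate analysis of $q=rh_r/h$. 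The differentiation-at-the-first-zero identity is also a point requiring care, since the simplification of $h_{rrr}(r_0)$ uses the vanishing of $h_{rr}(r_0)$.
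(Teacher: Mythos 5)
Your core device --- differentiating \eqref{h-eqn} at a putative first zero $r_0$ of $h_{rr}$ to get $2r_0^2h(r_0)h_{rrr}(r_0)=2h_r(r_0)\bigl[(n-1)(h(r_0)-1)-r_0(\lambda-h_r(r_0))\bigr]$ and reading off a sign contradiction --- is correct in all three cases (ii)--(iv) and is genuinely different from the paper, which never touches third derivatives. The paper instead runs a first-order barrier argument (if $h_r$ first reaches a level $\delta$ strictly between $\mu_1$ and $\lambda/n$ at $R_0$, then $h_{rr}(R_0)$ has the wrong sign directly from \eqref{h-eqn}), which yields the same bounds \eqref{h-uniform-upper-lower-bd32}, \eqref{h-uniform-upper-lower-bd33} without ever needing the sign of $h_{rr}$ near $r=0$. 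That last point is a real cost of your route: the theorem only assumes $h\in C^2((0,\infty))\cap C^1([0,\infty))$, so the statement $h_{rr}(0^+)=2\mu_1(n\mu_1-\lambda)/(n+3)$ needs an actual argument (an integrating-factor analysis of the numerator $N(r)=2r^2hh_{rr}$ near $0$, or an appeal to uniqueness against the analytic solution, which the paper only has for $2\le n\le 4$); "standard bootstrapping" is not enough as written. Your treatments of (iii) (if $h_r\to\ell<\infty$ then $rh_{rr}\to(n\ell-\lambda)/2>0$, non-integrable) and of (iv) (the integration of $h_{rr}>-\tfrac{n-1}{2r^2}+\tfrac{\lambda|h_r|}{2\varepsilon}$ over $[r_*,\infty)$) are correct and in places simpler than the paper, which instead derives everything from the integral identity \eqref{h-derivative-integral-formula0}, $h_r(r)=\tfrac{n-1}{r}+\lambda+\sqrt{h(r)/h(r_1)}\,H_1(r_1,r)$.

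The genuine gap is the finiteness of $h_\infty=\lim_{r\to\infty}h(r)$ in case (ii), which is part of the assertion of the theorem and which you explicitly leave open. Neither of your two sketches closes it: $q=rh_r/h\to0$ is compatible with $h\to\infty$ (e.g.\ logarithmic growth), and no supersolution is actually constructed. This is exactly where the paper's identity \eqref{h-derivative-integral-formula0} earns its keep: assuming $h_\infty=\infty$, one writes $h_r(r)\le\tfrac{n-1}{r}+\lambda+\sqrt{h(r)/h(r_1)}\,H_2(r_1)$ with $H_2(r_1)=h_r(r_1)-\lambda+\tfrac{(n-1)\sqrt{h(r_1)}}{2}\int_{r_1}^\infty\tfrac{h+1}{\rho^2\sqrt h}\,d\rho$, and shows (splitting into the subcases $h_r\to0$ and $h_r\ge\delta_1$ along a sequence, using $h\le 1+\mu_1r$ to control the integral) that $H_2(r_1)<0$ for suitable $r_1$; since $\sqrt{h(r)}\to\infty$ this forces $h_r\to-\infty$, contradicting $h_r>0$. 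You would need to import this identity (it is the conserved-quantity computation $(h^{-1/2}h_r)_r=(n-1)\tfrac{h-1}{2r^2h^{1/2}}-\tfrac{n-1+\lambda r}{2r}\tfrac{h_r}{h^{3/2}}$ from Lemma \ref{h-lower-bd-lem}) or an equivalent quantitative mechanism; without it, part (ii) of your proof is incomplete.
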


\begin{thm}\label{steady-soln-asymptotic-behaviour-for-hr-positive-thm}
Let $n\ge 2$ and $\mu_1>0$.  Suppose $h\in C^2((0,\infty))\cap C^1([0,\infty))$ is  a solution  of \eqref{h-steady-soliton-eqn}. Then $h$ satisfies \eqref{h-hr-limit-at-infty} and \eqref{h-uniform-upper-lower-bd33}.
\end{thm}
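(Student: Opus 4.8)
Since this is the $\lambda=0$ analogue of part~(iii) of Theorem~\ref{expanding-soln-asymptotic-hr-behaviour-thm}, I would prove it along the same lines: first upgrade \eqref{h-uniform-upper-lower-bd33} to the pointwise derivative bound $h_r(r)>\mu_1$ for all $r>0$, and then feed this back into the equation to force $h$ and $h_r$ to diverge. Throughout I use Theorem~\ref{h-steady-soln-property-thm1} (with $\lambda=0$, $\mu_1>0$), which gives $h>1$ and $h_r>0$ on $(0,\infty)$, and I rewrite \eqref{h-steady-soliton-eqn} as
\begin{equation*}
h_{rr}(r)=\frac{(n-1)(h(r)-1)}{2r^2}+\frac{h_r(r)\bigl(rh_r(r)-(n-1)\bigr)}{2rh(r)},\qquad r>0 .
\end{equation*}
The starting input concerns the behaviour of $h$ near the origin: once one knows (as established in the near-$r=0$ analysis used for the existence theorems) that $\lim_{r\to0^+}h_{rr}(r)$ exists, applying L'H\^opital's rule to the quotient $\tfrac{(h-1)h-rh_r}{r^2}$ in the above formula yields $\lim_{r\to0^+}h_{rr}(r)=\tfrac{2n\mu_1^2}{n+3}$, equivalently $h(r)=1+\mu_1 r+\tfrac{n\mu_1^2}{n+3}r^2+o(r^2)$ as $r\to0^+$. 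Since $\mu_1\ne0$, this gives $h_r(r)>\mu_1$ on some interval $(0,\delta)$.

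The key step is to show $h_r(r)>\mu_1$ for \emph{every} $r>0$. If this fails, set $r_0=\inf\{r>0:h_r(r)\le\mu_1\}$; then $r_0\ge\delta>0$, $h_r>\mu_1$ on $(0,r_0)$, and by continuity $h_r(r_0)=\mu_1$, so $h_{rr}(r_0)\le0$ (as $h_r$ descends to $\mu_1$ from above as $r\uparrow r_0$). On the other hand $h(r_0)-1=\int_0^{r_0}h_r(s)\,ds>\mu_1 r_0$ and $h(r_0)>1$, so evaluating the formula for $h_{rr}$ at $r_0$ (with $h_r(r_0)=\mu_1$) gives
\begin{equation*}
h_{rr}(r_0)>\frac{(n-1)\mu_1}{2r_0}+\frac{\mu_1^2}{2h(r_0)}-\frac{(n-1)\mu_1}{2r_0}=\frac{\mu_1^2}{2h(r_0)}>0 ,
\end{equation*}
a contradiction. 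Hence $h_r>\mu_1$ on $(0,\infty)$, and integrating from $0$ gives $h(r)>1+\mu_1 r$ for all $r>0$, i.e.\ \eqref{h-uniform-upper-lower-bd33}; in particular $\lim_{r\to\infty}h(r)=\infty$.

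Finally, I would deduce $\lim_{r\to\infty}h_r(r)=\infty$. For $r>(n-1)/\mu_1$ we have $rh_r>r\mu_1>n-1$, so both terms on the right of the formula for $h_{rr}$ are positive and $h_{rr}(r)>0$; thus $h_r$ is increasing on $((n-1)/\mu_1,\infty)$ and $\ell:=\lim_{r\to\infty}h_r(r)\in(\mu_1,\infty]$ exists. If $\ell<\infty$ then $h_r$ is bounded, say $h_r\le M$ on $(0,\infty)$; inserting $h-1>\mu_1 r$ and $h_r\le M$ into the formula for $h_{rr}$ and discarding the nonnegative term $h_r^2/(2h)$ gives $h_{rr}(r)>\tfrac{(n-1)\mu_1}{2r}-\tfrac{(n-1)M}{2\mu_1 r^2}\ge\tfrac{(n-1)\mu_1}{4r}$ for all large $r$. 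Integrating this forces $h_r(r)\to\infty$, contradicting $h_r\le M$; hence $\ell=\infty$, which together with $\lim_{r\to\infty}h(r)=\infty$ is \eqref{h-hr-limit-at-infty}.

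The main obstacle is the global bound $h_r>\mu_1$: one needs the precise super-linear behaviour of $h$ near $r=0$ just to get it started (this is where the hypothesis $\mu_1\neq0$ is first used), and then the sign computation at the hypothetical first touching point $r_0$, where the strict inequality $h(r_0)-1>\mu_1 r_0$ and the bound $h(r_0)>1$ must be combined in exactly the right way for $h_{rr}(r_0)$ to come out strictly positive. Once that bound is available, the divergence of $h_r$ is a routine differential-inequality argument.
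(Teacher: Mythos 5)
Your overall architecture is the right one and matches the paper's (the paper proves this theorem by running the proof of part~(iii) of Theorem~\ref{expanding-soln-asymptotic-hr-behaviour-thm} with $\lambda=0$): a barrier/first-touching-point argument for the lower bound, then a differential or integral inequality to force $h_r\to\infty$. Your computation at the touching point $r_0$ is correct, and your second half is actually a genuinely different and more elementary route than the paper's: the paper deduces $h_r\to\infty$ from the integral representation \eqref{h-derivative-integral-formula0} by showing the quantity $H_1(r_1,\infty)$ in \eqref{H1-defn} is eventually positive, whereas your pointwise inequality $h_{rr}(r)\ge\tfrac{(n-1)\mu_1}{2r}-\tfrac{(n-1)M}{2\mu_1 r^2}$ plus integration is shorter and avoids the $H_1$ machinery. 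That part is fine.

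The genuine gap is at the very first step. To run your touching-point argument at the level $\mu_1$ you need the \emph{strict} inequality $h_r>\mu_1$ on some initial interval $(0,\delta)$, and you obtain it from $\lim_{r\to0^+}h_{rr}(r)=\tfrac{2n\mu_1^2}{n+3}>0$. But the existence of $\lim_{r\to0^+}h_{rr}(r)$ is not established for a solution that is merely $C^2((0,\infty))\cap C^1([0,\infty))$: the fixed-point construction in Lemma~\ref{h-local-existence-lem} only gives $h_r$ Lipschitz near $0$, which bounds the singular term $\tfrac{n-1}{r^2}\int_0^r(h_r(\rho)-h_r(r))\,d\rho$ in \eqref{h-equivalent-eqn} but does not show it converges; your L'H\^opital computation determines the value of the limit \emph{assuming} it exists, so it cannot be used to establish existence. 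You also cannot simply identify $h$ with the analytic solution of Theorem~\ref{h-existence-analytic-soln-thm}, since uniqueness in the $C^2\cap C^1$ class is only proved for $2\le n\le 4$ while the theorem is stated for all $n\ge2$. The fix is exactly the paper's device in the proof of (iii): run the barrier at a level $\delta\in(0,\mu_1)$, where $h_r(0)=\mu_1>\delta$ and continuity of $h_r$ at $0$ already give $h_r>\delta$ near $0$; the touching-point computation then yields $2R_0^2h(R_0)h_{rr}(R_0)>n\delta^2R_0^2>0$ against $h_{rr}(R_0)\le0$, so $h_r>\delta$ and $h>1+\delta r$ everywhere, and letting $\delta\nearrow\mu_1$ gives $h_r\ge\mu_1$ and $h\ge1+\mu_1r$ --- which is precisely the non-strict bound \eqref{h-uniform-upper-lower-bd33} and is all your second half needs.
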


The plan of the paper is as follows. In section two we will use fixed point technique to prove the existence of unique solution of \eqref{h-ode-initial-value-problem}. In section three we will use a modification of the power series method of \cite{Br} to prove Theorem \ref{h-existence-analytic-soln-thm}. In section four we will use integral representation of  $h$ and construction of  appropriate auxilliary functions to prove the asymptotic behaviour of the solution of \eqref{h-steady-soliton-eqn}. In section five we will prove Theorem \ref{expanding-soln-asymptotic-hr-behaviour-thm} and Theorem \ref{steady-soln-asymptotic-behaviour-for-hr-positive-thm}. 

\section{Existence of rotationally symmetric gradient Ricci soliton}
\setcounter{equation}{0}
\setcounter{thm}{0}

In this section we will use fixed point argument to prove  the existence of solution of \eqref{h-ode-initial-value-problem}. We first start with the local existence of solution of \eqref{h-eqn}, \eqref{h-initial-condition}, near the origin. 

\begin{lem}\label{h-local-existence-lem}
Let $\lambda, \mu_1\in\mathbb{R}$ and $2\le n\le 4$. Then there exists a constant $\3>0$ such that 
\eqref{h-eqn} has a unique solution $h\in C^2((0,\3))\cap C^1([0,\3))$ in $(0,\3)$ which satisfies
\eqref{h-initial-condition}.
\end{lem}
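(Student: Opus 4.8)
The plan is to work locally near $r=0$: peel off the linear part of the Taylor expansion of $h$ at the origin, rewrite \eqref{h-eqn} as a fixed point equation for the remainder, and solve it by the contraction mapping principle on a short interval $[0,\varepsilon)$, inverting \emph{exactly} the Euler operator that controls the behaviour at $r=0$ rather than treating it perturbatively.

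First I would substitute $h(r)=1+\mu_1r+\psi(r)$ into \eqref{h-eqn} and look for $\psi$ with $\psi(0)=\psi_r(0)=0$ and $\psi(r)=O(r^2)$. A direct computation shows that the terms of order $r$ cancel, and, after dividing by $h$, the equation becomes
\[
\mathcal L\psi:=2r^2\psi_{rr}+(n-1)r\psi_r-(n-1)\psi=\mathcal H(r,\psi,\psi_r),
\]
where $\mathcal H$ is assembled from $(n-1)(\mu_1r+\psi)^2$, $r^2(\mu_1+\psi_r)(\mu_1+\psi_r-\lambda)$, and a correction $(h^{-1}-1)[\,\cdots\,]$ produced by writing $2r^2h=2r^2+2r^2(h-1)$. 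Two structural facts will do the work: $\mathcal H(r,\psi,\psi_r)=O(r^2)$ when $\psi$ is small in the weighted norm below, and, more importantly, $\mathcal H(r,\psi_{1},\psi_{1,r})-\mathcal H(r,\psi_{2},\psi_{2,r})=O\!\big(r^3\,\|\psi_1-\psi_2\|\big)$, since every term of $\mathcal H$ is quadratic (or higher) in the unknown or carries an explicit factor $r$ or $(h^{-1}-1)=O(r)$.

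Next I would invert $\mathcal L$. Its indicial polynomial $2\alpha^2+(n-3)\alpha-(n-1)$ has roots $\alpha=1$ and $\alpha=-(n-1)/2$, so $r$ and $r^{-(n-1)/2}$ span the homogeneous solutions, and variation of parameters with both integrations based at $0$ produces
\[
(\mathcal L^{-1}g)(r)=\frac{r}{n+1}\int_0^r\frac{g(s)}{s^2}\,ds-\frac{r^{-(n-1)/2}}{n+1}\int_0^r s^{(n-3)/2}g(s)\,ds,
\]
which, for $g$ continuous on $(0,\varepsilon]$ with $g=O(r^2)$, is the unique solution of $\mathcal L\psi=g$ in $C^1([0,\varepsilon))$ vanishing together with its first derivative at $0$; the key bound is that $|g(s)|\le Ms^2$ gives $|\mathcal L^{-1}g(r)|+r|(\mathcal L^{-1}g)_r(r)|\le C_nMr^2$, while $|g(s)|\le Ms^3$ gives the same with an extra factor $\varepsilon$ — so $\mathcal L^{-1}$ gains a power of $\varepsilon$ on cubic sources. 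On the Banach space $X_\varepsilon=\{\psi\in C^1([0,\varepsilon)):\psi(0)=\psi_r(0)=0,\ \|\psi\|:=\sup_{0<r<\varepsilon}(r^{-2}|\psi(r)|+r^{-1}|\psi_r(r)|)<\infty\}$ I then set $T\psi:=\mathcal L^{-1}\big(\mathcal H(\cdot,\psi,\psi_r)\big)$; by the two facts above, $T$ maps a suitable closed ball into itself (here $\|T0\|$ is a fixed constant comparable to $|\mu_1(n\mu_1-\lambda)|/(n+3)$, i.e. to $|h_{rr}(0)|$) and is a contraction once $\varepsilon$ is small. The fixed point gives $h=1+\mu_1r+\psi$ solving \eqref{h-eqn}; shrinking $\varepsilon$ keeps $h>0$ since $h\to1$, and since \eqref{h-eqn} is a regular ODE away from $0$ one bootstraps $h\in C^2((0,\varepsilon))$, while $\psi\in X_\varepsilon$ gives $h\in C^1([0,\varepsilon))$ with $h_r(0)=\mu_1$. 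Uniqueness follows because any solution in $C^2((0,\varepsilon'))\cap C^1([0,\varepsilon'))$ of \eqref{h-eqn}, \eqref{h-initial-condition}, after subtracting $1+\mu_1r$, satisfies $\mathcal L\psi=g$ with $g=O(r^2)$ and $\psi$ bounded, hence — the mode $r^{-(n-1)/2}$ being killed by boundedness and the mode $r$ by $\psi_r(0)=0$ — equals $\mathcal L^{-1}g$, so it lies in $X_{\varepsilon'}$ and coincides with the fixed point of $T$ on a small enough interval.

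I expect the main obstacle to be precisely the clash between the $1/r^2$ singularity and the $h$-dependent leading coefficient $2r^2h$ of $h_{rr}$: one cannot just integrate \eqref{h-eqn} twice, because the genuinely linear terms $(n-1)\psi$ and $(n-1)r\psi_r$ do not become small in the weighted norm as $\varepsilon\to0$; the remedy is to invert the exact Euler operator $\mathcal L$, so the technical heart is establishing the representation of $\mathcal L^{-1}$ and its $\varepsilon$-gain on higher-order sources. The restriction $2\le n\le4$ will be used to keep the exponent $-(n-1)/2$ and the constants $n+1$, $n+3$ appearing in these estimates under control.
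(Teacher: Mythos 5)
Your proposal is correct in substance, but it takes a genuinely different route from the paper. The paper's proof also uses a contraction argument, but directly on the pair $(h,w)=(h,h_r)$ in the Lipschitz space $C^{0,1}([0,\3])\times C^{0,1}([0,\3])$: the singular terms $(n-1)(h-1)^2/r^2$ and $(n-1)(h-1-rh_r)/r^2$ are controlled by writing $h-1=\int_0^r h_r$ and using the Lipschitz seminorm of $w$, and the whole right-hand side is then integrated once to define the map $\Phi_2$ in \eqref{contraction-map-defn}. Crucially, in the paper's Lipschitz estimate \eqref{e-w12-compare} the term $\tfrac{n-1}{2}\|w_1-w_2\|_{1,[0,\3]}$ does \emph{not} shrink as $\3\to0$, so the contraction factor is $\tfrac{25(n-1)}{99}+O(\3)$, and this is precisely where the hypothesis $2\le n\le 4$ is used (it makes $\tfrac{25(n-1)}{99}\le\tfrac{25}{33}<1$). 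Your scheme avoids this by absorbing the genuinely linear terms $(n-1)\psi-(n-1)r\psi_r$ into the Euler operator $\mathcal L$ and inverting it exactly; since the residual source $\mathcal H$ is quadratic in $\psi$ or carries an extra power of $r$, your Lipschitz constant is $O(\ve)$ and the argument imposes no restriction on $n$. This is a real gain: it proves the lemma for all $n\ge 2$, matching what the paper only recovers later by the power-series method of Lemma \ref{h-local-existence-analytic-soln-lem}, and in a less regular class than analyticity. Your closing remark that $2\le n\le 4$ is needed to control the exponent $-(n-1)/2$ and the constants $n+1$, $n+3$ is not accurate — those constants only improve as $n$ grows and the integrals $\int_0^r s^{(n-3)/2}g(s)\,ds$ converge for all $n\ge2$ when $g=O(s^2)$ — but this is a harmless overstatement of hypotheses, not a gap. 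The one step that deserves care in a full write-up is the uniqueness bootstrap: an arbitrary $C^1([0,\ve'))$ solution only gives $\psi=o(r)$, $\psi_r=o(1)$ a priori, so you must first check (as you indicate) that $\mathcal H(\cdot,\psi,\psi_r)=O(r^2)$ already under these weaker bounds before concluding $\psi=\mathcal L^{-1}g\in X_{\ve'}$; this does work out, since every term of $\mathcal H$ is either quadratic in $(\mu_1 r+\psi,\,\mu_1+\psi_r)$ with an explicit $r^2$ or $(h^{-1}-1)=O(r)$ factor.
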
 
\begin{proof}
We first observe that if $h\in C^2((0,\3))\cap C^1([0,\3))$ is a solution of \eqref{h-eqn} in $(0,\3)$ for some constant $\3>0$ which satisfies \eqref{h-initial-condition}, then 
\begin{align}
&2r^2h(r)h_{rr}(r)=(n-1)(h(r)-1)^2+r^2\left\{h_r(r)^2-\lambda h_r(r)+\frac{n-1}{r}\left(\frac{h(r)-1}{r}-h_r(r)\right)\right\}\notag\\
\Leftrightarrow\quad&h_{rr}(r)=\frac{1}{2h(r)}\left\{\frac{n-1}{r^2}\left(\int_0^rh_r(\rho)\,d\rho\right)^2+h_r(r)^2-\lambda h_r(r)+\frac{n-1}{r^2}\int_0^r(h_r(\rho)-h_r(r))\,d\rho\right\}\label{h-equivalent-eqn}\\
\Leftrightarrow\quad&h_r(r)=\int_0^r\frac{1}{2h(s)}\left\{\frac{n-1}{s^2}\left(\int_0^sh_r(\rho)\,d\rho\right)^2+\frac{n-1}{s^2}\int_0^s(h_r(\rho)-h_r(s))\,d\rho +h_r(s)^2-\lambda h_r(s)\right\}\,ds\notag\\
&\qquad\quad +\mu_1\notag
\end{align}
for any $0<r<\3$. This suggests one to use fixed point argument to prove the local existence of solution of \eqref{h-ode-initial-value-problem}.

For any $\3>0$ we now define the Banach space 
$$
\cX_\3:=\left\{(h,w): h\in  C^{0,1}\left( [0,\3]; \mathbb{R}\right),w\in C^{0,1}\left( [0,\3]; \mathbb{R}\right)\right\}
$$ 
with a norm given by
$$||(h,w)||_{\cX_\3}=\max\left(\|h\|_{C^{0,1}\left([0, \3]\right)} ,\|w\|_{C^{0,1}\left([0, \3]\right)} \right)$$  
where
\begin{equation*}
\|w\|_{C^{0,1}\left([0, \3]\right)}=\max(\|w\|_{L^{\infty}\left([0, \3]\right)},\|w\|_{1,[0, \3]})
\quad\mbox{ and }\quad  \|w\|_{1,[0, \3]}=\sup_{s,s'\in[0,\ve],\, s\not=s'} \frac{|w(s)-w(s')|}{|s-s'|}.
\end{equation*}
For any $(h,w)\in \cX_\3,$ we define  
$$\Phi(h,w):=\left(\Phi_1(h,w),\Phi_2(h,w)\right),$$ 
where 
\begin{equation}\label{contraction-map-defn}
\left\{\begin{aligned}
\Phi_1(h,w)(r)=&1+\int_0^r w(\rho)\,d\rho,\\
\Phi_2(h,w)(r)=&\int_0^r\frac{1}{2h(s)}\left\{(n-1)\left(\frac{\left(\int_0^sw(\rho)\,d\rho\right)^2}{s^2}+\frac{\int_0^s(w(\rho)-w(s))\,d\rho}{s^2}\right)+w(s)^2-\lambda w(s)\right\}\,ds\\
&\quad +\mu_1\end{aligned}\right.
\end{equation}
for any $0<r\leq\3$. Let $\cD_{\3}$ be the family of all $(h,w)\in \cX_\3$ satisfying
\begin{equation}\label{closed-set-defn}
\left\{\begin{aligned}
&\|h-1\|_{L^\infty([0, \3])}\leq (|\mu_1|+1)\3\\
&\|h\|_{1,[0, \3]}\le |\mu_1|+1\\
&\|w-\mu_1\|_{L^\infty([0, \3])}\leq 3\left(n(|\mu_1|+1)^2+|\lambda|(|\mu_1|+1)\right)\3\\
&\|w\|_{1,[0, \3]}\le 3\left(n(|\mu_1|+1)^2+|\lambda|(|\mu_1|+1)\right)\\
&h(0)=1,\quad w(0)=\mu_1.
\end{aligned}\right.
\end{equation}
Note that $\cD_{\3}$  is a closed subspace of $\cX_\3$. Since $(1,\mu_1)\in \cD_{\3}$, $\cD_{\3}\ne\phi$. Let
\begin{equation*}
\3_1=\frac{1}{100\left(n(|\mu_1|+1)^2+|\lambda|(|\mu_1|+1)\right)}
\end{equation*}
We will assume that $\3\in(0,\3_1)$ for the rest of the proof.
We will show that if $\3\in(0,\3_1)$ is sufficiently small, the map $(h,w)\mapsto\Phi(h,w)$ will have a unique fixed point in $\cD_{\3}.$
We first  prove that $\Phi(\cD_{\3})\subset \cD_{\3}$ for any $\3\in(0,\3_1)$. In fact for any $\3\in(0,\3_1)$ and $(h,w)\in \cD_{\3},$ by \eqref{closed-set-defn} we have
\begin{equation}\label{h-w-upper-lower-bds}
\frac{99}{100}\le h(r)\le\frac{101}{100}\quad\forall 0\le r\le\3\quad\mbox{ and }\quad  \|w\|_{L^{\infty}\left([0, \3]\right)}\le |\mu_1|+\frac{3}{100}\le |\mu_1|+1.
\end{equation}
Hence by \eqref{contraction-map-defn}, \eqref{closed-set-defn} and \eqref{h-w-upper-lower-bds}, we have
\begin{align}\label{phi-1-lip}
&|\Phi_1(h,w)(r_2)-\Phi_1(h,w)(r_1)|\le \|w\|_{L^{\infty}([0, \3])}|r_2-r_1|\quad\forall 0\le r_1<r_2\le\3\notag\\
\Rightarrow\quad&\|\Phi_1(h,w)\|_{1,[0, \3]}\le \|w\|_{L^{\infty}([0, \3])}\le |\mu_1|+1
\end{align}
and
\begin{equation}\label{c1-condition}
\|\Phi_1(h,w)-1\|_{L^{\infty}\left([0, \3]\right)}\le\|w\|_{L^{\infty}\left([0, \3)\right)}\3
\le(|\mu_1|+1)\3
\end{equation}
and
\begin{align*}
&|\Phi_2(h,w)(r_2)-\Phi_2(h,w)(r_1)|\notag\\
\le&\frac{100}{2\cdot 99}\int_{r_1}^{r_2}
\left(n\|w\|^2_{L^{\infty}([0, \3])}+|\lambda|\|w\|_{L^{\infty}([0, \3])}+\frac{(n-1)\|w\|_{1,[0, \3]}}{s^2}\int_0^s(s-\rho)\,d\rho\right)\,ds\notag\\
\le&\frac{50}{99}
\left(n(|\mu_1|+1)^2+|\lambda|(|\mu_1|+1)+\frac{3(n-1)}{2}\left(n(|\mu_1|+1)^2+|\lambda|(|\mu_1|+1)\right)\right)|r_2-r_1|\notag\\
\le&\frac{25}{9}\left(n(|\mu_1|+1)^2+|\lambda|(|\mu_1|+1)\right)|r_2-r_1|\notag\\
\le&3\left(n(|\mu_1|+1)^2+|\lambda|(|\mu_1|+1)\right)|r_2-r_1|\quad\forall 0\le r_1<r_2\le\3
\end{align*}
Thus
\begin{equation}\label{c3-condition}
\|\Phi_2(h,w)\|_{1,[0, \3]}\le 3\left(n(|\mu_1|+1)^2+|\lambda|(|\mu_1|+1)\right)
\end{equation}
and
\begin{equation}\label{c2-condition}
|\Phi_2(h,w)(r)-\mu_1|\le 3\left(n(|\mu_1|+1)^2+|\lambda|(|\mu_1|+1)\right)\3\qquad\forall 0\le r\le\3.
\end{equation}
By \eqref{phi-1-lip}, \eqref{c1-condition}, \eqref{c3-condition} and \eqref{c2-condition} we get that $\Phi(\cD_{\3})\subset \cD_{\3}$ for any $\3\in(0,\3_1)$.

Let $(h_1,w_1), (h_2,w_2)\in \cD_{\3}$, $0<\3<\3_1$, $\delta_1=\|(h_1,w_1)-(h_2,w_2)\|_{\cX_\3}$ and
\begin{equation}
E(w,s)=\frac{n-1}{s^2}\left(\int_0^sw(\rho)\,d\rho\right)^2+\frac{n-1}{s^2}\int_0^s(w(\rho)-w(s))\,d\rho+w(s)^2-\lambda w(s)
\end{equation}
for any  $w\in C^{0,1}\left( [0,\3]; \mathbb{R}\right)$, $0<s<\3$. Then by \eqref{closed-set-defn},
\begin{equation}\label{h12-w-upper-lower-bds}
\frac{99}{100}\le h_i(r)\le \frac{101}{100}\quad\quad\mbox{ and }\quad  \|w_i\|_{L^{\infty}\left([0, \3]\right)}\le |\mu_1|+1\quad\forall 0\le r\le\3,i=1,2.
\end{equation}
and since $w_1(0)=w_2(0)=\mu_1$,
\begin{align}\label{e-w12-compare}
&|E(w_1,s)-E(w_2,s)|\notag\\
\le&\frac{n-1}{s^2}\left(\int_0^s|w_1(\rho)-w_2(\rho)|\,d\rho\right)
\left(\int_0^s|w_1(\rho)+w_2(\rho)|\,d\rho\right)+|w_1(s)-w_2(s)||w_1(s)+w_2(s)|\notag\\
&\qquad +\frac{n-1}{s^2}\int_0^s|(w_1-w_2)(\rho)-(w_1-w_2)(s)|\,d\rho+|\lambda||w_1(s)-w_2(s)|\notag\\
\le&\left\{\left(\frac{n-1}{s}\int_0^s\rho\,d\rho+s\right)
(\|w_1\|_{L^{\infty}([0,\3])}+\|w_2\|_{L^{\infty}([0,\3])})
+\frac{n-1}{s^2}\int_0^s(s-\rho)\,d\rho+|\lambda|s\right\}\|w_1-w_2\|_{1,[0,\3]}\notag\\
\le&\left(((n+1)(|\mu_1|+1)+|\lambda|)s+\frac{n-1}{2}\right)\|w_1-w_2\|_{1,[0,\3]}\quad\forall 0<s\le\3.
\end{align}
Similarly,
\begin{align}\label{e-expression-bd}
|E(w_2,s)|\le&n\|w_2\|_{L^{\infty}([0,\3])}^2+|\lambda|\|w_2\|_{L^{\infty}([0,\3])}+\frac{n-1}{2}\|w_2\|_{1,[0,\3]}\notag\\
\le&n(|\mu_1|+1)^2+|\lambda|(|\mu_1|+1)+\frac{3(n-1)}{2}\left(n(|\mu_1|+1)^2+|\lambda|(|\mu_1|+1)\right)\notag\\
\le&\frac{11}{2}\left(n(|\mu_1|+1)^2+|\lambda|(|\mu_1|+1)\right)=:C_1\quad (\mbox{say})\quad\forall 0<s\le\3.
\end{align}
Hence by \eqref{contraction-map-defn}, \eqref{closed-set-defn}, \eqref{h12-w-upper-lower-bds}, \eqref{e-w12-compare} and \eqref{e-expression-bd},
\begin{align*}
&|(\Phi_2(h_1,w_1)-\Phi_2(h_2,w_2))(r_2)-(\Phi_2(h_1,w_1)-\Phi_2(h_2,w_2))(r_1)|\notag\\
=&\frac{1}{2}\left|\int_{r_1}^{r_2}\frac{E(w_1,s)}{h_1(s)}\,ds-\int_{r_1}^{r_2}\frac{E(w_2,s)}{h_2(s)}\,ds\right|\notag\\
\le&\int_{r_1}^{r_2}\frac{|E(w_1,s)-E(w_2,s)|}{2h_1(s)}\,ds+\int_{r_1}^{r_2}\frac{|E(w_2,s)||h_2(s)-h_1(s)|}{2h_1(s)h_2(s)}\,ds\notag\\
\le&\frac{100}{2\cdot 99}\left(\int_{r_1}^{r_2}\left(((n+1)(|\mu_1|+1)+|\lambda|)s+\frac{n-1}{2}\right)\,ds\right)\|w_1-w_2\|_{1,[0,\3]}\notag\\
&\qquad+\frac{1}{2}\left(\frac{100}{99}\right)^2C_1\left(\int_{r_1}^{r_2}s\,ds\right)\|h_1-h_2\|_{1,[0,\3]}\notag\\
\le&\left\{\frac{25}{99}\left[((n+1)(|\mu_1|+1)+|\lambda|)(r_2+r_1)+n-1\right]+\frac{2500C_1}{99^2}(r_2+r_1)\right\}|r_2-r_1|\delta_1\notag\\
\le&\left(C_2\3+\frac{25}{33}\right)|r_2-r_1|\delta_1\quad\forall 0\le r_1<r_2\le\3
\end{align*}
where
\begin{equation*}
C_2=(n+1)(|\mu_1|+1)+|\lambda|+\frac{5000}{99^2}C_1.
\end{equation*}
Thus
\begin{equation}\label{phi2-1-estimate}
\|(\Phi_2(h_1,w_1)-\Phi_2(h_2,w_2))\|_{1,[0,\3]}\le\left(C_2\3+\frac{25}{33}\right)\delta_1.
\end{equation}
Similarly,
\begin{equation}\label{phi2-0-estimate}
\|(\Phi_2(h_1,w_1)-\Phi_2(h_2,w_2))\|_{L^{\infty}([0,\3])}\le\left(C_2\3+\frac{25}{33}\right)\3\delta_1.
\end{equation}
Let $\3_2=\min \left(\3_1,\frac{1}{33C_2}\right)$ and $0<\3<\3_2$ for the rest of the proof. Then by \eqref{phi2-1-estimate} and \eqref{phi2-0-estimate},
\begin{equation}\label{phi2-c01-estimate}
\|(\Phi_2(h_1,w_1)-\Phi_2(h_2,w_2))\|_{C^{0,1}([0,\3])}\le\left(C_2\3+\frac{25}{33}\right)\delta_1\le\frac{26}{33}\delta_1.
\end{equation}
On the other hand,
\begin{align*}
&|(\Phi_1(h_1,w_1)-\Phi_1(h_2,w_2))(r_2)-(\Phi_2(h_1,w_1)-\Phi_2(h_2,w_2))(r_1)|\\
\le&\int_{r_1}^{r_2}|w_1(s)-w_2(s)|\,ds\notag\\
\le&\|w_1-w_2\|_{1,[0,\3]}\int_{r_1}^{r_2}s\,ds\notag\\
\le&\3\delta_1|r_2-r_1|\quad\forall 0\le r_1<r_2\le\3.
\end{align*}
Hence
\begin{equation}\label{phi1-lip-estimate}
\|\Phi_1(h_1,w_1)-\Phi_1(h_2,w_2)\|_{1,[0,\3]}\le\3\delta_1.
\end{equation}
Similarly,
\begin{align}\label{phi1-c0-estimate}
&|\Phi_1(h_1,w_1)(r)-\Phi_1(h_2,w_2)(r)|\le\3\delta_1 r\le\3^2\delta_1\quad\forall0\le r\le\3\notag\\
\Rightarrow\quad&\|\Phi_1(h_1,w_1)-\Phi_1(h_2,w_2)\|_{L^{\infty}([0,\3])}\le\3\delta_1.
\end{align}
By \eqref{phi1-lip-estimate} and \eqref{phi1-c0-estimate},
\begin{equation}\label{phi1-c01-estimate}
\|\Phi_1(h_1,w_1)-\Phi_1(h_2,w_2)\|_{C^{0,1}([0,\3])}\le\3\delta_1.
\end{equation}
Hence for any $0<\3<\3_2$, by \eqref{phi2-c01-estimate} and \eqref{phi1-c01-estimate},
\begin{equation*}
\|(\Phi(h_1,w_1)-\Phi(h_2,w_2))\|_{\cX_\3}\le\frac{26}{33}\delta_1.
\end{equation*}
Thus $\Phi$ is a contraction map on  $\cD_{\3}$. Hence by the contraction map theorem there exists a 
unique fixed point $(h,w)=\Phi(h,w)$ in $\cD_{\3}$. Thus
\begin{equation}\label{fix-pt-eqn}
\left\{\begin{aligned}
h(r)=&1+\int_0^r w(\rho)\,d\rho,\\
w(r)=&\int_0^r\frac{1}{2h(s)}\left\{\frac{n-1}{s^2}\left(\int_0^sw(\rho)\,d\rho\right)^2+\frac{n-1}{s^2}\int_0^s(w(\rho)-w(s))\,d\rho+w(s)^2-\lambda w(s)\right\}\,ds\\
&\quad+\mu_1
\end{aligned}\right.
\end{equation}
for any $0<r\leq\3$ with $h(0)=1$, $w(0)=\mu_1$, and both $h$ and $w$ are differentiable on $[0,\3]$ with
\begin{equation}\label{hr-w-relation}
h_r(r)=w(r)
\end{equation}
and
\begin{equation}\label{w-integral-eqn}
w_r(r)=\frac{1}{2h(r)}\left\{\frac{n-1}{r^2}\left(\int_0^rw(\rho)\,d\rho\right)^2+\frac{n-1}{r^2}\int_0^r(w(\rho)-w(r))\,d\rho+w(r)^2-\lambda w(s)\right\}.
\end{equation}
Putting \eqref{hr-w-relation} in \eqref{w-integral-eqn} we get \eqref{h-equivalent-eqn}. Hence for any $0<\3<\3_2$,  \eqref{h-eqn} has a unique solution $h\in C^2((0,\3))\cap C^1([0,\3))$  in $(0,\3)$ which satisfies \eqref{h-initial-condition} and the lemma follows.
\end{proof}

We next observe that by a similar argument as the proof of Lemma 1 of \cite{Br} we have the following result.

\begin{lem}(cf. Lemma 1 of \cite{Br})\label{h-monotone-lem}
Let $\lambda\in\mathbb{R}$, $0\le a<b$ and $n\ge 2$. Let $h\in C^2((a,b))$ be a solution of \eqref{h-eqn} in $(a,b)$. Then the following holds.

\begin{enumerate}

\item[(i)] Suppose there exists $x_0\in (a,b)$ such that $h_r(x_0)\ge 0$ and $h(x_0)>1$. Then $h_r(x)>0$ on $(x_0,b)$. 

\item[(ii)] Suppose there exists $x_0\in (a,b)$ such that $h_r(x_0)\le 0$ and $h(x_0)<1$. Then $h_r(x)<0$ on $(x_0,b)$. 

\end{enumerate}
\end{lem}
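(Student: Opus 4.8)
The plan is to run the standard ODE continuation / first-critical-point argument, reading off all the sign information we need from the right-hand side of \eqref{h-eqn}. I will prove (i); part (ii) is entirely symmetric, obtained by replacing ``$h>1$'' by ``$h<1$'', ``$h_r>0$'' by ``$h_r<0$'', and reversing the inequalities throughout. Note first that $0\le a<x_0<b$ forces $x_0>0$, so every power of $r$ occurring in \eqref{h-eqn} is positive on the interval we work on, and $h>0$ by hypothesis; hence at any point the sign of $h_{rr}(r)$ equals the sign of the right-hand side of \eqref{h-eqn}, since it is that quantity divided by the positive number $2r^2h(r)$.

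The first step is to show $h_r>0$ on some interval $(x_0,x_0+\delta)$. If $h_r(x_0)>0$ this is immediate by continuity. If $h_r(x_0)=0$, then evaluating \eqref{h-eqn} at $x_0$ kills the $h_r$-terms and leaves $2x_0^2h(x_0)h_{rr}(x_0)=(n-1)h(x_0)(h(x_0)-1)>0$, so $h_{rr}(x_0)>0$ and therefore $h_r(x)=h_{rr}(x_0)(x-x_0)+o(x-x_0)>0$ for $x$ slightly larger than $x_0$. Next, suppose toward a contradiction that $h_r$ vanishes somewhere in $(x_0,b)$, and set $x_1:=\inf\{x\in(x_0,b):h_r(x)=0\}$. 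By the previous step $x_1>x_0$, by continuity $h_r(x_1)=0$, and by minimality of $x_1$ together with the positivity of $h_r$ just to the right of $x_0$ we get $h_r>0$ on $(x_0,x_1)$. Consequently $h$ is strictly increasing on $[x_0,x_1]$, so $h(x_1)>h(x_0)>1$.

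The final step is to evaluate \eqref{h-eqn} at $x_1$: since $h_r(x_1)=0$ the right-hand side equals $(n-1)h(x_1)(h(x_1)-1)>0$, forcing $h_{rr}(x_1)>0$; but then $h_r(x)=h_{rr}(x_1)(x-x_1)+o(x-x_1)<0$ for $x$ slightly less than $x_1$, contradicting $h_r>0$ on $(x_0,x_1)$. Hence $h_r$ has no zero in $(x_0,b)$, and since it is continuous, nowhere zero, and positive immediately to the right of $x_0$, it is positive on all of $(x_0,b)$, proving (i). The only points needing a little care are the boundary case $h_r(x_0)=0$ (handled above by reading the sign of $h_{rr}(x_0)$ off the equation) and the bookkeeping that the strict inequality $h>1$, resp. $h<1$, survives up to the candidate first critical point $x_1$ — which is automatic because $h_r$ keeps one sign on $(x_0,x_1)$. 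No quantitative estimates enter; the whole argument is a sign analysis of \eqref{h-eqn} at critical points of $h$, exactly parallel to Lemma~1 of \cite{Br}.
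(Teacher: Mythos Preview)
Your proof is correct. The paper does not supply its own proof of this lemma; it simply states the result with the remark ``by a similar argument as the proof of Lemma~1 of \cite{Br}''. Your first-critical-point argument --- read off the sign of $h_{rr}$ from \eqref{h-eqn} at any zero of $h_r$, and use this both to launch the positivity of $h_r$ to the right of $x_0$ and to rule out a first zero $x_1$ --- is exactly the standard sign analysis that Bryant's Lemma~1 uses, so your approach matches what the paper is invoking.
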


\begin{lem}\label{h-lower-bd-lem}
Let $\lambda\in\mathbb{R}$ and $n\ge 2$. Suppose $h\in C^2((0,L))$ satisfies \eqref{h-eqn}
in $(0,L)$ for some constant $L\in (0,\infty)$  such that $L<-\frac{(n-1)}{\lambda}$ if $\lambda<0$. Then there exists a constant $c_1>0$ such that
\begin{equation}\label{h-uniform-lower-bd}
h(r)\ge c_1\quad\forall L/2\le r\le L.
\end{equation}  
\end{lem}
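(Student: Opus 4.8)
The plan is to argue by contradiction, exploiting the structure of the equation \eqref{h-eqn} near a hypothetical first zero of $h$. Suppose no such lower bound $c_1>0$ exists on $[L/2,L]$. Since $h$ is continuous and positive on $(0,L)$, failure of \eqref{h-uniform-lower-bd} means there is a point $r_0\in (L/2,L]$ (or a sequence approaching such a point) where $h$ becomes arbitrarily small; more precisely I would first reduce to the case that $h$ attains a positive minimum on $[L/2,L]$ that we are trying to bound below, and examine the behaviour of $h$ to the left of where it gets small. The key observation is that $h>0$ on all of $(0,L)$ with $h(0)=1$ built into the hypotheses of the earlier lemmas is \emph{not} available here — only $h\in C^2((0,L))$ solving \eqref{h-eqn} — so I must work purely from the ODE. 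Rewrite \eqref{h-eqn} as
\begin{equation*}
2r^2 h\,h_{rr}=(n-1)h(h-1)+rh_r(rh_r-\lambda r-(n-1)).
\end{equation*}
The first step is to extract a differential inequality for $\log h$ or for $h$ itself that prevents $h$ from decaying to $0$ in finite "time" $r$, given the sign restriction $L<-(n-1)/\lambda$ when $\lambda<0$ (equivalently $\lambda r+(n-1)>0$ on $(0,L)$ when $\lambda<0$; and this quantity is automatically positive when $\lambda\ge 0$).

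Concretely, I would introduce $q(r)=rh_r(r)/h(r)$, the logarithmic derivative scaled by $r$, since the orbital and radial curvature quantities $h_r$ and $(1-h)/r$ are the natural variables (as noted after \eqref{g=h-form}). Dividing \eqref{h-eqn} by $h$ and expressing $h_{rr}/h$ in terms of $q$ and $q_r$, one gets a first-order ODE for $q$ of Riccati type: schematically $2r q_r = (\text{quadratic in }q) + (n-1)(1-1/h) - \lambda r q \cdot(\text{stuff})$, where the only place $h$ enters undifferentiated is through the bounded-below-by-$0$, bounded-above term $(n-1)(h-1)/h = (n-1)(1-1/h)$. If $h\to 0$ somewhere in $[L/2,L]$, then $1/h\to\infty$ there, which forces $q$ (and hence $h_r/h$) to blow up; tracking the sign shows $q\to -\infty$, i.e. $h$ is decreasing sharply. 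The contradiction should come from integrating $h_r/h = q/r$ backwards from the near-zero of $h$ toward $r=L/2$: a blow-up of $-q$ near $r_0$ would force $\log h$ to have a non-integrable negative singularity, i.e. $h$ would already have vanished at some $r_1<r_0$ with $r_1\ge L/2$ bounded away from $0$, contradicting $h>0$ on $(0,L)$; alternatively one runs the comparison to get $h$ bounded below on the whole interval by a constant depending only on $n,\lambda,L$ and on $h(L/2)$ — but since we only know $h(L/2)>0$ and not a quantitative bound, the cleaner route is the pure contradiction argument.

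The main obstacle I anticipate is handling the sign of the $rh_r(rh_r-\lambda r-(n-1))$ term carefully: when $h_r$ is large and negative (which is precisely the regime where $h$ is small and dropping), $rh_r - \lambda r - (n-1)$ is also large and negative, so their product is a large \emph{positive} contribution to $2r^2 h h_{rr}$, forcing $h_{rr}>0$, i.e. $h$ convex — and a positive convex function cannot reach $0$ in finite $r$ while staying positive just before. This convexity-near-a-would-be-zero argument is the heart of it, and the restriction $L<-(n-1)/\lambda$ for $\lambda<0$ is exactly what guarantees $\lambda r+(n-1)>0$ so that $rh_r-\lambda r-(n-1)$ and $h_r$ have the same (negative) sign when $h_r<0$, making the product positive. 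So the plan is: (1) assume $\inf_{[L/2,L]} h = 0$, pick $r_0$ realizing a value of $h$ smaller than $h(L/2)$ and smaller than some threshold, and look at the last point $r_1\in[L/2,r_0)$ where $h(r_1)=h(L/2)$; (2) on $(r_1,r_0)$ we have $h<h(L/2)<1$ and $h$ decreasing, so $h_r\le 0$; (3) from \eqref{h-eqn} and $\lambda r+(n-1)>0$, deduce $h_{rr}\ge 0$ there, hence $h$ is convex and decreasing on $(r_1,r_0)$; (4) a convex decreasing function on an interval of length $\ge (r_0-r_1)$ either stays bounded below by its endpoint value or its graph crosses $0$ — quantify using $h(r_0)\le h(r_1)+h_r(r_1)(r_0-r_1)$ and convexity to force a positive lower bound on $[L/2,L]$ depending only on $n,\lambda,L$, the desired $c_1$; the case $h_r(r_1)\ge 0$ is excluded since then by Lemma \ref{h-monotone-lem}(ii) (using $h(r_1)<1$) we would need $h_r(r_1)<0$ or handle it directly. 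Finishing requires only elementary estimates once the convexity is in hand.
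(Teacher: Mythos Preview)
Your plan has a genuine gap at step (3). From \eqref{h-eqn} with $h<1$, $h_r\le 0$, and $\lambda r+(n-1)>0$, the term $rh_r(rh_r-\lambda r-(n-1))$ is indeed nonnegative, but $(n-1)h(h-1)$ is strictly negative and nothing prevents it from dominating: at any point where $h_r=0$ one gets $2r^2 h\,h_{rr}=(n-1)h(h-1)<0$, hence $h_{rr}<0$. Convexity on all of $(r_1,r_0)$ is therefore not available, and even where $h_{rr}>0$ does hold, convexity alone does not furnish a positive lower bound on $h$ (your tangent-line inequality in step (4) has the wrong direction for a convex function).

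The deeper issue is that your guiding intuition --- that near a would-be zero of $h$ the derivative $h_r$ is large and negative, so $q\to-\infty$ --- has the wrong sign. The paper rewrites \eqref{h-eqn} as an equation for $(h^{-1/2}h_r)_r$ and integrates to obtain
\[
h_r(r)>\frac{n-1}{r}+\lambda+\sqrt{\frac{h(r)}{h(r_1)}}\Bigl(h_r(r_1)-\frac{n-1}{r_1}-\lambda\Bigr)\qquad\forall\, r>r_1>0.
\]
Along any sequence $r_i\to L$ with $h(r_i)\to 0$ this forces $\liminf_i h_r(r_i)\ge (n-1)/L+\lambda>0$; the hypothesis $L<-(n-1)/\lambda$ when $\lambda<0$ enters exactly here. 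So $h_r$ becomes \emph{positive}, not large and negative, where $h$ is small. Combined with points where $h_r<0$ (from the mean value theorem, since one may arrange $h(r_{i+1})<h(r_i)$), one locates a critical point $b$ with $h_r(b)=0$, and then the trichotomy on whether $h(b)$ equals, exceeds, or is below $1$, together with Lemma~\ref{h-monotone-lem}, yields the contradiction. The integral representation for $h_r$, not convexity, is what drives the proof.
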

\begin{proof}
Suppose \eqref{h-uniform-lower-bd} does not hold. Then there exists a sequence $\{r_i\}_{i=1}^{\infty}\subset (L/2,L)$, $r_i<r_{i+1}$ for all $i\in\Z^+$,  $r_i\to L$ as $i\to\infty$, such that
\begin{equation}\label{h-infty-limit=0}
h(r_i)\to 0\quad\mbox{ as }i\to\infty.
\end{equation}
Without loss of generality we may assume that
\begin{equation}\label{h-monotone-decreasing}
0<h(r_{i+1})<h(r_i)<1\quad\forall i\in\Z^+.
\end{equation}
By \eqref{h-eqn} or Lemma 3.1 of \cite{H},
\begin{align*}
(h^{-1/2}h_r)_r=&(n-1)\frac{h-1}{2r^2h^{1/2}}-\frac{n-1+\lambda r}{2r}\cdot\frac{h_r}{h^{3/2}}
\quad\forall 0<r<L\notag\\
\Rightarrow\qquad h_r(r)=&\sqrt{h(r)}\left\{\frac{h_r(r_1)}{\sqrt{h(r_1)}}+(n-1)\int_{r_1}^r\frac{h(\rho)-1}{2\rho^2\sqrt{h(\rho)}}\,d\rho-\int_{r_1}^r\frac{((n-1)\rho^{-1}+\lambda)h_r(\rho)}{2h(\rho)^{3/2}}\,d\rho\right\}\notag\\
=&\sqrt{h(r)}\left\{\frac{h_r(r_1)}{\sqrt{h(r_1)}}+(n-1)\int_{r_1}^r\frac{h(\rho)-1}{2\rho^2\sqrt{h(\rho)}}\,d\rho+\left(\frac{n-1}{r}+\lambda\right)\frac{1}{\sqrt{h(r)}}\right.\notag\\
&\qquad\left.-\left(\frac{n-1}{r_1}+\lambda\right)\frac{1}{\sqrt{h(r_1)}}+(n-1)\int_{r_1}^r\frac{d\rho}{\rho^2\sqrt{h(\rho)}}\right\}\quad\forall r_1<r<L.\notag
\end{align*}
Hence
\begin{align}
h_r(r)=&\frac{n-1}{r}+\lambda+\sqrt{\frac{h(r)}{h(r_1)}}\left(h_r(r_1)-\frac{n-1}{r_1}-\lambda\right)+(n-1)\frac{\sqrt{h(r)}}{2}\int_{r_1}^r\frac{h(\rho)+1}{\rho^2\sqrt{h(\rho)}}\,d\rho\label{h-derivative-integral-formula0}\\
>&\frac{n-1}{r}+\lambda+\sqrt{\frac{h(r)}{h(r_1)}}\left(h_r(r_1)-\frac{n-1}{r_1}-\lambda\right)\quad\forall r_1<r<L.\label{h-derivative-integral-formula}
\end{align}
Putting $r=r_i$ in \eqref{h-derivative-integral-formula} and letting $i\to\infty$,
\begin{align}\label{h-derivative-ri-positive}
&\liminf_{i\to\infty}h_r(r_i)\ge\frac{n-1}{L}+\lambda>0\notag\\
\Rightarrow\quad&\exists i_0\in\Z^+\,\,\mbox{ such that }\,\, h_r(r_i)>0\quad\forall i\ge i_0.
\end{align}
By \eqref{h-monotone-decreasing} and the mean value thoerem for any $i\in\Z^+$ there exists $\xi_i\in (r_i,r_{I+1})$ such that 
\begin{equation*}
h_r(\xi_i)=\frac{h(r_{i+1})-h(r_i)}{r_{i+1}-r_i}<0.
\end{equation*}
This together with \eqref{h-derivative-ri-positive} implies that for any $i\ge i_0$, there exists a maximal interval $(r_i,b_i)$, $b_i\in (r_i,\xi_i)$, such that 
\begin{equation}\label{hr-0}
h_r(r)>0\quad\forall r_i\le r<b_i\quad\mbox{ and }\quad h_r(b_i)=0.
\end{equation}
We now divide the proof into three cases.

\noindent $\underline{\text{\bf Case 1}}$: $h(b_{i_0})=1$.

\noindent By \eqref{hr-0} and uniqueness solution of ODE $h(r)\equiv 1$ on $(0,L)$. This contradicts \eqref{h-infty-limit=0}. Hence case 1 does not hold.

\noindent $\underline{\text{\bf Case 2}}$: $h(b_{i_0})>1$.

\noindent Then by \eqref{hr-0} and Lemma \ref{h-monotone-lem},
\begin{equation*}
h_r(r)>0\quad\forall b_{i_0}<r<L\quad
\Rightarrow\quad h(r)>h(b_{i_0})\quad \forall b_{i_0}<r<L
\end{equation*}
which contradicts \eqref{h-infty-limit=0}. Hence case 2 does not hold.

\noindent $\underline{\text{\bf Case 3}}$: $h(b_{i_0})<1$.

\noindent Then by \eqref{hr-0} and Lemma \ref{h-monotone-lem},
\begin{equation*}
h_r(r)<0\quad\forall b_{i_0}\le r<L.
\end{equation*}
This contradicts \eqref{h-derivative-ri-positive}. Hence case 3 does not hold. By case 1, case 2 and case 3
we get a contradiction. Hence no such sequence $\{r_i\}_{i=1}^{\infty}$ exists. Thus there exists a constant $c_1>0$ such that \eqref{h-uniform-lower-bd} holds and the lemma follows.
\end{proof}

\begin{lem}\label{h-upper-bd-lem}
Let $\lambda\in\mathbb{R}$ and $n\ge 2$. Suppose $h\in C^2((0,L))$ satisfies \eqref{h-eqn}
in $(0,L)$ for some constant $L\in (0,\infty)$  such that $L<-(n-1)/\lambda$ if $\lambda<0$. Then there exists a constant $c_2>0$ such that
\begin{equation}\label{h-uniform-upper-bd}
h(r)\le c_2\quad\forall L/2\le r\le L.
\end{equation} 
\end{lem}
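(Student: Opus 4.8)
The plan is to convert the integral representation \eqref{h-derivative-integral-formula0} of $h_r$, together with the lower bound furnished by Lemma \ref{h-lower-bd-lem}, into a linear Gronwall inequality for $g:=\sqrt h$.

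First I would take $L/2$ as base point; since $L/2\in(0,L)$, the quantities $h(L/2)>0$ and $h_r(L/2)$ are fixed finite numbers. The hypothesis that $L<-(n-1)/\lambda$ when $\lambda<0$ is exactly what permits applying Lemma \ref{h-lower-bd-lem}, which provides a constant $c_1>0$ with $h(r)\ge c_1$ for all $r\in[L/2,L)$. The computation leading to \eqref{h-derivative-integral-formula0} is valid with any base point in $(0,L)$, so with base point $L/2$ it gives, for $L/2<r<L$,
\[
h_r(r)=\frac{n-1}{r}+\lambda+\sqrt{\frac{h(r)}{h(L/2)}}\left(h_r(L/2)-\frac{2(n-1)}{L}-\lambda\right)+(n-1)\frac{\sqrt{h(r)}}{2}\int_{L/2}^r\frac{h(\rho)+1}{\rho^2\sqrt{h(\rho)}}\,d\rho .
\]

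Next I would estimate the right-hand side using $r,\rho\ge L/2$ (so $1/\rho^2\le 4/L^2$ and $(n-1)/r\le 2(n-1)/L$), the identity $\frac{h(\rho)+1}{\sqrt{h(\rho)}}=\sqrt{h(\rho)}+\frac1{\sqrt{h(\rho)}}\le\sqrt{h(\rho)}+c_1^{-1/2}$, and $\sqrt{h(r)}\ge\sqrt{c_1}$ to absorb the bounded term $\frac{n-1}{r}+\lambda$ into a multiple of $\sqrt{h(r)}$. This produces a constant $C=C(n,\lambda,L,c_1,h(L/2),h_r(L/2))>0$ with
\[
|h_r(r)|\le C\sqrt{h(r)}\left(1+\int_{L/2}^r\sqrt{h(\rho)}\,d\rho\right),\qquad L/2\le r<L .
\]
Since $h\in C^2((0,L))$ and $h>0$, the function $g:=\sqrt h$ is $C^2$ on $(0,L)$ with $g'=h_r/(2\sqrt h)$, so dividing the previous inequality by $2\sqrt{h(r)}$ turns it into $|g'(r)|\le\frac C2\bigl(1+\int_{L/2}^r g(\rho)\,d\rho\bigr)$ on $[L/2,L)$. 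Integrating from $L/2$ to $r$ and using that $\rho\mapsto\int_{L/2}^\rho g$ is nondecreasing, I obtain $g(r)\le A+B\int_{L/2}^r g(\rho)\,d\rho$ for constants $A,B>0$ depending only on $n,\lambda,L,c_1,h(L/2),h_r(L/2)$; Gronwall's inequality then gives $g(r)\le Ae^{B(r-L/2)}\le Ae^{BL}$ for $L/2\le r<L$. Hence $h=g^2\le c_2:=A^2e^{2BL}$ on $[L/2,L)$, which yields \eqref{h-uniform-upper-bd} (the value at $r=L$, if $h$ extends there, being included by continuity).

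The only step that is not mere bookkeeping is the treatment of the nonlinear integral term $\sqrt{h(r)}\int_{L/2}^r h(\rho)/\sqrt{h(\rho)}\,d\rho$ in \eqref{h-derivative-integral-formula0}; the substitution $g=\sqrt h$, which is legitimate precisely because Lemma \ref{h-lower-bd-lem} keeps $h$ bounded away from $0$, linearizes it into an affine functional of $g$, after which the desired bound on the finite interval $[L/2,L)$ drops out of Gronwall's inequality. All the other terms on the right of \eqref{h-derivative-integral-formula0} are lower order and their estimation is entirely routine.
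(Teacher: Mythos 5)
Your proof is correct, and it takes a genuinely different route from the paper's. The paper argues by contradiction: assuming a sequence $r_i\to L$ with $h(r_i)\to\infty$, it uses the mean value theorem and \eqref{h-derivative-integral-formula} to show $h_r(r)>\frac{n-1}{r}+\lambda>0$ eventually, and then bounds the logarithmic derivative directly from the ODE via $(h_r/h)_r\le (n-1)\frac{h-1}{2r^2h}\le\frac{2(n-1)}{L^2}$ (discarding the nonpositive terms), which upon integration forces $h$ to stay bounded and yields a contradiction; notably this does not invoke Lemma \ref{h-lower-bd-lem} at all. Your argument is direct: you feed the lower bound $h\ge c_1$ of Lemma \ref{h-lower-bd-lem} into the same integral identity \eqref{h-derivative-integral-formula0} to linearize the problem in $g=\sqrt h$ and close with Gronwall. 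The dependence on Lemma \ref{h-lower-bd-lem} is legitimate (it is proved beforehand and independently of the upper bound), and all your estimates check out: the base point $L/2$ makes $1/\rho^2$ and $(n-1)/r$ harmless, $h\ge c_1$ controls $1/\sqrt{h(\rho)}$ and lets you absorb the constant terms into $\sqrt{h(r)}$, and the double integral in the Gronwall step is correctly dominated by $\tfrac L2\int_{L/2}^r g$. What your approach buys is a clean, quantitative bound without a compactness/contradiction argument; what the paper's buys is independence from the lower-bound lemma. Either is acceptable here.
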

\begin{proof}
Suppose \eqref{h-uniform-upper-bd} does not hold. Then there exists a sequence $\{r_i\}_{i=1}^{\infty}\subset (L/2,L)$, $r_i<r_{i+1}$ for all $i\in\Z^+$, $r_i\to L$ as $i\to\infty$, such that
\begin{equation}\label{h-infty-limit=infty}
h(r_i)\to\infty\quad\mbox{ as }i\to\infty.
\end{equation}
Without loss of generality we may assume that
\begin{equation}\label{h-monotone-increasing}
h(r_{i+1})>h(r_i)+1\quad\forall i\in\Z^+.
\end{equation}
By \eqref{h-monotone-increasing} and the mean value theorem for any $i\in\Z^+$ there exists $\xi_i\in (r_i,r_{i+1})$ such that
\begin{align}\label{h-derivatire-positive+limit-infty}
&h_r(\xi_i)=\frac{h(r_{i+1})-h(r_{r_i})}{r_{i+1}-r_i}>\frac{1}{r_{i+1}-r_i}>0\quad\forall i\in\Z^+\notag\\
\Rightarrow\quad&h_r(\xi_i)\to\infty\quad\mbox{ as }i\to\infty.
\end{align}
Hence by \eqref{h-derivatire-positive+limit-infty} there exists $i_0\in\Z^+$ such that
\begin{equation}\label{h-derivative-lower-bd1}
h_r(\xi_{i_0})>\frac{2(n-1)}{L}+\lambda\quad\Rightarrow\quad h_r(\xi_{i_0})>\frac{n-1}{\xi_{i_0}}+\lambda.
\end{equation}
By replacing $r_1$ by $\xi_{i_0}$ in \eqref{h-derivative-integral-formula} we get
\begin{equation}\label{h-derivative-lower-bd2}
h_r(r)>\frac{n-1}{r}+\lambda+\sqrt{\frac{h(r)}{h(\xi_{i_0})}}\left(h_r(\xi_{i_0})-\frac{n-1}{\xi_{i_0}}-\lambda\right)>\frac{n-1}{r}+\lambda>0\quad\forall \xi_{i_0}\le r<L.
\end{equation}
By \eqref{h-infty-limit=infty} and \eqref{h-derivative-lower-bd2},
\begin{equation}\label{h-endpt-limit=infty}
h(r)\to\infty\quad\mbox{ as }r\to L.
\end{equation}
Since $L<-(n-1)/\lambda$ if $\lambda<0$,
\begin{equation}\label{lambda r+}
n-1+\lambda r>0\quad\forall 0<r<L.
\end{equation}
By \eqref{h-eqn}, \eqref{h-derivative-lower-bd2} and \eqref{lambda r+},
\begin{equation}\label{hr-h-ratio-diff-lower-bd}
\left(\frac{h_r}{h}\right)_r=\frac{h_{rr}}{h}-\frac{h_r^2}{h^2}=\frac{1}{2h}\left((n-1)\frac{h-1}{r^2}-\frac{(n-1+\lambda r)h_r}{rh}+\frac{h_r^2}{h}\right)-\frac{h_r^2}{h^2}
\le(n-1)\frac{h-1}{2r^2h}\le\frac{2(n-1)}{L^2}
\end{equation}
holds for any  $\xi_{i_0}\le r<L$.
Integrating \eqref{hr-h-ratio-diff-lower-bd} over $(\xi_{i_0},r)$, $r\in (\xi_{i_0},L)$, by \eqref{h-derivative-lower-bd2} we have
\begin{align*}
&\frac{h_r(r)}{h(r)}\le\frac{h_r(\xi_{i_0})}{h(\xi_{i_0})}+\frac{2(n-1)}{L}=:C_3\,\,(\mbox{say})\quad\forall \xi_{i_0}\le r<L\notag\\
\Rightarrow\quad&\log\left(\frac{h(r)}{h(\xi_{i_0})}\right)\le C_3L\quad\forall \xi_{i_0}\le r<L\notag\\
\Rightarrow\quad&h(r)\le h(\xi_{i_0})e^{C_3L}\quad\forall \xi_{i_0}\le r<L.
\end{align*}
which contradicts \eqref{h-endpt-limit=infty}. Hence no such sequence $\{r_i\}_{i=1}^{\infty}$ exists. Thus there exists a constant $c_2>0$ such that \eqref{h-uniform-upper-bd} holds and the lemma follows.

\end{proof}

\begin{lem}\label{h-derivative-upper-bd-lem}
Let $\lambda\in\mathbb{R}$ and $n\ge 2$. Suppose $h\in C^2((0,L))$ satisfies \eqref{h-eqn}
in $(0,L)$ for some constant $L\in (0,\infty)$  such that $L<-(n-1)/\lambda$ if $\lambda<0$. Then there exists a constant $c_3>0$ such that
\begin{equation}\label{h-derivative-uniform-upper-bd}
h_r(r)\le c_3\quad\forall L/2\le r\le L.
\end{equation}  
\end{lem}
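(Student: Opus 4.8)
The plan is to read off the bound directly from the integral representation \eqref{h-derivative-integral-formula0} of $h_r$ that was obtained in the proof of Lemma \ref{h-lower-bd-lem}. Although that identity was recorded there for a particular lower endpoint, its derivation — writing $(h^{-1/2}h_r)_r$ via \eqref{h-eqn}, integrating over $(r_1,r)$, and integrating by parts — goes through verbatim for \emph{any} $0<r_1<r<L$, since \eqref{h-eqn} forces $h>0$ throughout $(0,L)$ and $h\in C^2((0,L))$. So the first step is simply to observe that \eqref{h-derivative-integral-formula0} holds with $r_1$ replaced by $L/2$.

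Concretely, fixing $r_1=L/2$ gives, for every $r\in[L/2,L)$,
\begin{equation*}
h_r(r)=\frac{n-1}{r}+\lambda+\sqrt{\frac{h(r)}{h(L/2)}}\left(h_r(L/2)-\frac{2(n-1)}{L}-\lambda\right)+(n-1)\frac{\sqrt{h(r)}}{2}\int_{L/2}^r\frac{h(\rho)+1}{\rho^2\sqrt{h(\rho)}}\,d\rho .
\end{equation*}
By Lemma \ref{h-lower-bd-lem} and Lemma \ref{h-upper-bd-lem} there are constants $0<c_1\le c_2$ with $c_1\le h\le c_2$ on $[L/2,L]$, and $h_r(L/2)$ is a finite number because $h\in C^2((0,L))$. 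Now estimate the four contributions: $\tfrac{n-1}{r}\le\tfrac{2(n-1)}{L}$; the middle term is at most $\sqrt{c_2/c_1}\,\bigl(|h_r(L/2)|+\tfrac{2(n-1)}{L}+|\lambda|\bigr)$ in absolute value; and, since $\rho\ge L/2$ on the range of integration, the integral is bounded by $\int_{L/2}^{L}\frac{c_2+1}{(L/2)^2\sqrt{c_1}}\,d\rho=\frac{2(c_2+1)}{L\sqrt{c_1}}$, so the last term is at most $(n-1)\tfrac{\sqrt{c_2}}{2}\cdot\frac{2(c_2+1)}{L\sqrt{c_1}}$. Summing gives an explicit constant $c_3>0$, depending only on $n,\lambda,L,c_1,c_2$ and $h_r(L/2)$, with $h_r(r)\le c_3$ on $[L/2,L)$, which is the assertion.

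I do not expect a genuine obstacle here; the only points needing care are (i) confirming that \eqref{h-derivative-integral-formula0} is valid for the general base point $r_1=L/2$ rather than only for the sequence points used in Lemma \ref{h-lower-bd-lem}, and (ii) noting that the hypothesis $L<-(n-1)/\lambda$ when $\lambda<0$ is precisely what licenses the use of Lemmas \ref{h-lower-bd-lem} and \ref{h-upper-bd-lem}. If one prefers not to quote the integral formula, an alternative is to work with $q:=h_r/h$, which by \eqref{h-eqn} solves the Riccati-type equation $q_r=\frac{(n-1)(h-1)}{2r^2h}-\tfrac12 q^2-\frac{\lambda r+n-1}{2r}q$ on $(0,L)$; since $\frac{(n-1)(h-1)}{2r^2h}$ and $\frac{\lambda r+n-1}{2r}$ are bounded on $[L/2,L]$ and the $-\tfrac12 q^2$ term dominates once $|q|$ is large, a standard first-exit argument forces $q$, hence $h_r=qh$, to stay bounded above on $[L/2,L)$. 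The first route is shorter, so I would follow it.
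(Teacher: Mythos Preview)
Your argument is correct and is genuinely different from the paper's. You simply evaluate the closed-form identity \eqref{h-derivative-integral-formula0} at base point $r_1=L/2$ and bound each of its four pieces using the two-sided bound $c_1\le h\le c_2$ on $[L/2,L)$ supplied by Lemmas \ref{h-lower-bd-lem} and \ref{h-upper-bd-lem}; this gives an explicit $c_3$ directly. The paper instead argues by contradiction: if $h_r(r_i)\to\infty$ along a sequence $r_i\to L$, then once $h_r$ exceeds a threshold $C_4$ the rewritten ODE \eqref{h-eqn2} forces $h_{rr}>h_r^2/(4h)$, and a barrier argument propagates this to all of $[r_{i_0},L)$; but then the companion inequality $h_{rr}\le h_r^2/h$ yields $h_r\le (h_r(r_{i_0})/h(r_{i_0}))\,h\le c_2\,h_r(r_{i_0})/h(r_{i_0})$, contradicting unboundedness. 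Your route is shorter and gives an explicit constant, and the same formula bounded from below would equally well handle Lemma \ref{h-derivative-lower-bd-lem}. The paper's differential-inequality technique, on the other hand, is self-contained (it does not reach back into the proof of an earlier lemma for a formula) and sets up the barrier machinery that is reused verbatim in Lemma \ref{h-derivative-lower-bd-lem}. One small presentational point: you announce ``four contributions'' but only itemise three---the free $\lambda$ term is left implicit; add it to the sum.
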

\begin{proof}
Let $c_2>0$ be as given by Lemma \ref{h-upper-bd-lem}.
Suppose \eqref{h-derivative-uniform-upper-bd}  does not hold. Then there exists a sequence $\{r_i\}_{i=1}^{\infty}\subset (L/2,L)$, $r_i<r_{i+1}$ for all $i\in\Z^+$, $r_i\to L$ as $i\to\infty$, such that
\begin{equation}\label{h-derivative-endpt-limit=infty}
h_r(r_i)\to\infty\quad\mbox{ as }i\to\infty.
\end{equation}
Let
\begin{equation}\label{c4-defn}
C_4=\frac{16}{L}\max\left(c_2n,\sqrt{c_2n},n-1+|\lambda| L\right).
\end{equation}
By  \eqref{h-derivative-endpt-limit=infty} there exists $i_0\in\Z^+$ such that
\begin{equation}\label{hr-lower-bd3}
h_r(r_i)>C_4\quad\forall i\ge i_0.
\end{equation}
We now rewrite \eqref{h-eqn} as
\begin{equation}\label{h-eqn2}
h_{rr}(r)=\frac{h_r(r)^2}{2h(r)}+(n-1)\frac{h(r)-1}{2r^2}-\frac{n-1+\lambda r}{2r}\cdot\frac{h_r(r)}{h(r)},\quad h(r)>0.
\end{equation}
Then by  \eqref{h-uniform-upper-bd},  \eqref{hr-lower-bd3} and \eqref{h-eqn2},
\begin{align}\label{hrr-lower-bd-ineqn}
h_{rr}(r_{i_0})=&\frac{h_r(r_{i_0})^2}{4h(r_{i_0})}+\frac{1}{2}\left(\frac{h_r(r_{i_0})^2}{4h(r_{i_0})}+(n-1)\frac{h(r_{i_0})-1}{r_{i_0}^2}\right)
+\frac{1}{2h(r_{i_0})}\left(\frac{h_r(r_{i_0})^2}{4}-\frac{n-1+\lambda r_{i_0}}{r_{i_0}}h_r(r_{i_0})\right)\notag\\
\ge&\frac{h_r(r_{i_0})^2}{4h(r_{i_0})}+\frac{1}{2}\left(\frac{h_r(r_{i_0})^2}{4c_2}-\frac{4(n-1)}{L^2}\right)
+\frac{1}{2h(r_{i_0})}\left(\frac{h_r(r_{i_0})^2}{4}-\frac{h_r(r_{i_0})^2}{16}-16\left(\frac{(n-1+|\lambda| L)}{L}\right)^2\right)\notag\\
>&\frac{h_r(r_{i_0})^2}{4h(r_{i_0})}.
\end{align}
Hence by \eqref{hrr-lower-bd-ineqn} and continuity there exists $0<\delta<L-r_{i_0}$ such that
\begin{equation}\label{hrr-hr-hr-ineqn}
h_{rr}(r)>\frac{h_r(r)^2}{4h(r)}\quad\forall r_{i_0}\le r<r_{i_0}+\delta.
\end{equation}
Let 
\begin{equation*}
b_0=\sup\left\{b\in (r_{i_0},L):h_{rr}(r)>\frac{h_r(r)^2}{4h(r)}\quad\forall r_{i_0}\le r<b\right\}.
\end{equation*}
Then $b_0\ge r_{i_0}+\delta$. Suppose $b_0<L$. Then by \eqref{hr-lower-bd3} and the definition of $b_0$,
\begin{align}
&h_{rr}(r)>\frac{h_r(r)^2}{4h(r)}\quad\forall r_{i_0}\le r<b_0\quad\mbox{ and }\quad h_{rr}(b_0)=\frac{h_r(b_0)^2}{4h(b_0)}\label{hrr-hr-relation=}\\
\Rightarrow\quad&h_r(b_0)>h_r(r_{i_0})>C_4.\label{hrr-hr-relation>}
\end{align}
Then by repeating the above argument but with $b_0$ and \eqref{hrr-hr-relation>} replacing $r_{i_0}$ and 
\eqref{hr-lower-bd3} in the proof we get
\begin{equation*}
h_{rr}(b_0)>\frac{h_r(b_0)^2}{4h(b_0)}
\end{equation*}
which contradicts \eqref{hrr-hr-relation=}. Hence $b_0=L$. By \eqref{hr-lower-bd3},
\begin{align}\label{hrr-hr-relation>5}
&h_{rr}(r)>\frac{h_r(r)^2}{4h(r)}\quad\forall r_{i_0}\le r<L\notag\\
\Rightarrow\quad&h_r(r)>h_r(r_{i_0})>C_4\quad\forall r_{i_0}\le r<L.
\end{align}
By \eqref{h-uniform-upper-bd}, \eqref{lambda r+}, \eqref{h-eqn2} and \eqref{hrr-hr-relation>5}, 
\begin{align*}
&h_{rr}(r)\le\frac{1}{2h(r)}\left(h_r(r)^2+(n-1)\frac{h(r)^2}{r^2}\right)\le\frac{1}{2h(r)}\left(h_r(r)^2+\frac{4(n-1)c_2^2}{L^2}\right)\le\frac{h_r(r)^2}{h(r)}\quad \forall r_{i_0}\le r<L\notag\\
\Rightarrow\quad&\frac{h_{rr}(r)}{h_r(r)}\le\frac{h_r(r)}{h(r)}\quad \forall r_{i_0}\le r<L\notag\\
\Rightarrow\quad&h_r(r)\le\frac{h_r(r_{i_0})}{h(r_{i_0})}h(r)\le c_2\frac{h_r(r_{i_0})}{h(r_{i_0})}\quad \forall r_{i_0}\le r<L
\end{align*}
which contradicts \eqref{h-derivative-endpt-limit=infty}. Hence no such sequence $\{r_i\}_{i=1}^{\infty}$ exists. Thus there exists a constant $c_3>0$ such that \eqref{h-derivative-uniform-upper-bd} holds and the lemma follows.

\end{proof}

\begin{lem}\label{h-derivative-lower-bd-lem}
Let $\lambda\in\mathbb{R}$ and $n\ge 2$. Suppose $h\in C^2((0,L))$ satisfies \eqref{h-eqn}
in $(0,L)$ for some constant $L\in (0,\infty)$ such that $L<-(n-1)/\lambda$ if $\lambda<0$. Then there exists a constant $c_4\in\mathbb{R}$ such that
\begin{equation}\label{h-derivative-uniform-lower-bd}
h_r(r)\ge c_4\quad\forall L/2\le r\le L.
\end{equation}  
\end{lem}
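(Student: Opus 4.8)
The plan is to mirror the structure of the three preceding lemmas: argue by contradiction, extract a sequence $\{r_i\}\subset (L/2,L)$ with $r_i\to L$ and $h_r(r_i)\to -\infty$, and derive a contradiction using the already-established two-sided bounds on $h$ (Lemmas \ref{h-lower-bd-lem} and \ref{h-upper-bd-lem}: $c_1\le h\le c_2$ on $[L/2,L]$) together with the integral formula \eqref{h-derivative-integral-formula0}. The key observation is that \eqref{h-derivative-integral-formula0} already gives a \emph{lower} bound for $h_r$ once we fix a base point. Indeed, from
\[
h_r(r)=\frac{n-1}{r}+\lambda+\sqrt{\tfrac{h(r)}{h(r_1)}}\Bigl(h_r(r_1)-\tfrac{n-1}{r_1}-\lambda\Bigr)+\frac{(n-1)\sqrt{h(r)}}{2}\int_{r_1}^r\frac{h(\rho)+1}{\rho^2\sqrt{h(\rho)}}\,d\rho,
\]
valid for $r_1<r<L$, the integral term is nonnegative and, using $c_1\le h(\rho)\le c_2$, $\rho\ge L/2$, it is bounded above by a constant depending only on $n,c_1,c_2,L$. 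So if we can locate a \emph{single} base point $r_1\in(L/2,L)$ at which $h_r(r_1)$ is finite and $h(r_1)\ge c_1>0$, then $h_r(r)$ is bounded below on $(r_1,L)$ by a constant, contradicting $h_r(r_i)\to-\infty$ for $r_i>r_1$.

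First I would pick any fixed $r_1\in(L/2,L)$; since $h\in C^2((0,L))$, both $h(r_1)$ and $h_r(r_1)$ are finite, and by Lemma \ref{h-lower-bd-lem} we have $h(r_1)\ge c_1>0$. Let $c_2$ be the upper bound from Lemma \ref{h-upper-bd-lem}. Then for every $r\in(r_1,L)$,
\[
\frac{(n-1)\sqrt{h(r)}}{2}\int_{r_1}^r\frac{h(\rho)+1}{\rho^2\sqrt{h(\rho)}}\,d\rho
\;\le\;\frac{(n-1)\sqrt{c_2}}{2}\cdot\frac{c_2+1}{\sqrt{c_1}}\cdot\frac{4}{L^2}\cdot L
\;=:\;C_5,
\]
and, depending on the sign of $h_r(r_1)-\tfrac{n-1}{r_1}-\lambda$, the term $\sqrt{h(r)/h(r_1)}\,(h_r(r_1)-\tfrac{n-1}{r_1}-\lambda)$ is bounded below by $-\sqrt{c_2/c_1}\,|h_r(r_1)-\tfrac{n-1}{r_1}-\lambda|$. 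Hence from \eqref{h-derivative-integral-formula0}, for all $r\in(r_1,L)$,
\[
h_r(r)\;\ge\;\frac{n-1}{L}+\lambda-\sqrt{\frac{c_2}{c_1}}\left|h_r(r_1)-\frac{n-1}{r_1}-\lambda\right|\;=:\;c_4,
\]
a finite constant. Since any sequence $r_i\to L$ eventually lies in $(r_1,L)$, this rules out $h_r(r_i)\to-\infty$; combined with the continuity of $h_r$ on the compact set $[L/2,r_1]$, we conclude $h_r\ge \min(c_4,\min_{[L/2,r_1]}h_r)$ on all of $[L/2,L]$, which is \eqref{h-derivative-uniform-lower-bd}.

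I do not expect a serious obstacle here: unlike the previous three lemmas, where the contradiction required setting up auxiliary monotonicity arguments (the $(h_r/h)_r$ estimate, the maximal-interval bootstrap for $h_{rr}>h_r^2/(4h)$, etc.), the lower bound on $h_r$ falls out directly from the sign of the integral term in \eqref{h-derivative-integral-formula0}. The only point requiring a little care is the legitimacy of using a fixed interior base point $r_1$ rather than the moving points $r_i$ — but since $h$ is assumed $C^2$ on the \emph{open} interval $(0,L)$, any such $r_1$ is admissible. One should also double-check that $L<-(n-1)/\lambda$ (when $\lambda<0$) is genuinely only needed to keep $(n-1)/L+\lambda$ finite and to invoke Lemmas \ref{h-lower-bd-lem}–\ref{h-upper-bd-lem}; it is not otherwise used in the estimate above. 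If a cleaner statement is desired one may simply absorb $\min_{[L/2,r_1]}h_r$ into the constant and state \eqref{h-derivative-uniform-lower-bd} with $c_4$ possibly negative, as the statement already allows.
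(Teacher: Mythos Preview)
Your argument is correct and takes a genuinely different, more elementary route than the paper. The paper argues by contradiction in the style of Lemma~\ref{h-derivative-upper-bd-lem}: assuming a sequence $r_i\to L$ with $h_r(r_i)\to-\infty$, it picks $r_{i_0}$ with $h_r(r_{i_0})<-C_4$, shows (via \eqref{h-eqn2} and the bounds $c_1\le h\le c_2$) that $h_{rr}(r_{i_0})>h_r(r_{i_0})^2/(4h(r_{i_0}))>0$, and then runs a maximal-interval bootstrap to conclude $h_r(r)>h_r(r_{i_0})$ for all $r\in(r_{i_0},L)$, contradicting $h_r(r_i)\to-\infty$. You instead read the lower bound straight off the integral representation \eqref{h-derivative-integral-formula0}: the term $(n-1)\tfrac{\sqrt{h(r)}}{2}\int_{r_1}^r\tfrac{h(\rho)+1}{\rho^2\sqrt{h(\rho)}}\,d\rho$ is manifestly nonnegative, and the remaining terms are controlled by $c_1\le h\le c_2$ from Lemmas~\ref{h-lower-bd-lem}--\ref{h-upper-bd-lem} and by the fixed finite value $h_r(r_1)$. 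This is shorter and requires no second-derivative analysis. (Two cosmetic remarks: your displayed constant $C_5$, the \emph{upper} bound on the integral term, is never used in the lower bound for $h_r$ and can be deleted; and the initial paragraph setting up a contradiction with a sequence $\{r_i\}$ is also superfluous, since your argument in fact produces the bound directly.) What the paper's approach buys is uniformity of method with Lemma~\ref{h-derivative-upper-bd-lem}; what yours buys is brevity and the observation that the asymmetry between the upper and lower bounds on $h_r$ is real---the lower bound is the easy one.
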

\begin{proof}
Let $c_1>0$, $c_2>0$, $c_3>0$, be as given by Lemma \ref{h-lower-bd-lem}, Lemma \ref{h-upper-bd-lem} and Lemma \ref{h-derivative-upper-bd-lem} respectively. 
Suppose \eqref{h-derivative-uniform-lower-bd}  does not hold. Then there exists a sequence $\{r_i\}_{i=1}^{\infty}\subset (L/2,L)$, $r_i<r_{i+1}$ for all $i\in\Z^+$, $r_i\to L$ as $i\to\infty$, such that
\begin{equation}\label{h-derivative-endpt-limit=neg-infty}
h_r(r_i)<0\quad\forall i\in\Z^+\quad\mbox{ and }\quad h_r(r_i)\to -\infty\quad\mbox{ as }i\to\infty.
\end{equation}
Let $C_4$ be given by \eqref{c4-defn}. By \eqref{h-derivative-endpt-limit=neg-infty} there exists $i_0\in\Z^+$ such that
\begin{equation}\label{hr-upper-bd6}
h_r(r_i)<-C_4\quad\forall i\ge i_0.
\end{equation}
Then by \eqref{hr-upper-bd6} and an argument similar to the proof of Lemma \ref{h-derivative-upper-bd-lem} we get that \eqref{hrr-lower-bd-ineqn} holds. Hence by \eqref{hrr-lower-bd-ineqn} and continuity there exists $0<\delta<L-r_{i_0}$ such that \eqref{hrr-hr-hr-ineqn} holds. Then by \eqref{hrr-hr-hr-ineqn},
\begin{equation*}
h_{rr}(r)>0\quad\forall r_{i_0}\le r<r_{i_0}+\delta\quad\Rightarrow\quad h_r(r)>h_r(r_{i_0})\quad\forall r_{i_0}<r<r_{i_0}+\delta.
\end{equation*}
Let 
\begin{equation*}
b_2=\sup\left\{b\in (r_{i_0},L):h_r(r)>h_r(r_{i_0})\quad\forall r_{i_0}\le r<b\right\}.
\end{equation*}
Then $b_2\ge r_{i_0}+\delta$. Suppose $b_2<L$. Then 
\begin{align}
&h_r(r)>h_r(r_{i_0})\quad\forall r_{i_0}\le r<b\quad\mbox{ and }\quad  h_r(b_2)=h_r(r_{i_0})<-C_4\label{hr-eqn5}\\
\Rightarrow\quad&h_{rr}(b_2)\le 0.\label{hr-eqn11}
\end{align}
On the other hand by \eqref{hr-eqn5} and an argument similar to the proof of Lemma \ref{h-derivative-upper-bd-lem} we have
\begin{equation*}
h_{rr}(b_2)>\frac{h_r(b_2)^2}{4h(b_2)}
\end{equation*}
which contradicts \eqref{hr-eqn11}. Hence $b_2=L$. Thus
\begin{equation*}
h_r(r)>h_r(r_{i_0})\quad\forall r_{i_0}\le r<L
\end{equation*}
which contradicts \eqref{h-derivative-endpt-limit=neg-infty}.
Hence no such sequence $\{r_i\}_{i=1}^{\infty}$ exists. Thus there exists a constant $c_4>0$ such that \eqref{h-derivative-uniform-lower-bd} holds and the lemma follows.

\end{proof}

We next observe that by standard ODE theory we have the following result.

\begin{lem}\label{h-extension-lem}
Let $\lambda\ge 0$, $\mu_1\in\mathbb{R}$, $L>0$, $n\ge 2$, $b_1\in (c_1,c_2)$, $b_2\in (c_4,c_3)$ for some constants $c_2>c_1>0$ and $c_3>c_4$. Then there exists a constant $0<\delta_1<L/4$ depending only on $c_1, c_2, c_3, c_4$ such that for any $r_0\in (L/2,L)$ \eqref{h-eqn} has a unique solution $\4{h}\in C^2((r_0-\delta_1,r_0+\delta_1))$ in $(r_0-\delta_1,r_0+\delta_1)$ which satisfies
\begin{equation}\label{h-tilde-initial-condition}
\4{h}(r_0)=b_1\quad\mbox{ and }\quad \4{h}_r(r_0)=b_2.
\end{equation} 
\end{lem}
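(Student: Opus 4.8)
The plan is to rewrite \eqref{h-eqn} as a first-order system in the standard way and invoke the Picard–Lindel\"of theorem, taking care that all constants in the existence time depend only on $c_1,c_2,c_3,c_4$ and not on the particular point $r_0$. First I would introduce $w=\4h_r$ and, using the equivalent form \eqref{h-eqn2}, write the system
\begin{equation*}
\left\{\begin{aligned}
\4h_r(r)&=w(r),\\
w_r(r)&=\frac{w(r)^2}{2\4h(r)}+(n-1)\frac{\4h(r)-1}{2r^2}-\frac{n-1+\lambda r}{2r}\cdot\frac{w(r)}{\4h(r)},
\end{aligned}\right.
\end{equation*}
with initial data $\4h(r_0)=b_1$, $w(r_0)=b_1$. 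Since $\lambda\ge 0$ and $r$ ranges in $[L/2,L]$, the coefficient $1/r$ and $(n-1+\lambda r)/r$ are bounded above and below by constants depending only on $L$; in particular there is no singularity issue here because we stay away from $r=0$.

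Next I would fix the rectangle in phase space on which the right-hand side is Lipschitz. Since $b_1\in(c_1,c_2)$ with $c_1>0$ and $b_1\in(c_4,c_3)$, choose $\rho_0>0$ so small that the closed box $Q=\{(\4h,w):|\4h-b_1|\le\rho_0,\ |w-b_1|\le\rho_0\}$ satisfies $\4h\ge c_1/2>0$ and $|w|\le |c_3|+|c_4|+1=:M_0$ for all $r_0\in(L/2,L)$ and all $b_1$ in the stated ranges; this uses only $c_1,c_2,c_3,c_4$. On $[L/2,L]\times Q$ the map $F(r,\4h,w)$ defined by the right-hand side above is $C^1$ hence Lipschitz in $(\4h,w)$ with a Lipschitz constant $K$ and a sup bound $N$ both depending only on $c_1,c_2,c_3,c_4,L,n,\lambda$ — crucially \emph{uniform in $r_0$}. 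The Picard iteration then converges on $(r_0-\delta_1,r_0+\delta_1)$ with $\delta_1=\min(L/4,\rho_0/N,1/(2K))$, and this $\delta_1$ is independent of $r_0$.

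The resulting fixed point is a solution $(\4h,w)\in C^1$ of the system, and then bootstrapping through the second equation gives $\4h\in C^2$; positivity $\4h(r)>0$ holds automatically on the interval since $\4h$ stays in $Q$, so $\4h$ solves \eqref{h-eqn}. Uniqueness is the uniqueness clause of Picard–Lindel\"of applied to the $C^1$ (hence locally Lipschitz) vector field $F$ on the region $\4h>0$.

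The only mildly delicate point — and the ``main obstacle,'' such as it is — is bookkeeping: one must verify that the box $Q$ and hence $N$, $K$, and $\delta_1$ can all be chosen depending only on $c_1,c_2,c_3,c_4$ (together with the fixed data $L,n,\lambda$), uniformly as $r_0$ ranges over $(L/2,L)$ and $b_1$ over its two intervals. This is immediate once one notes that on $[L/2,L]$ the factors $1/r^2$ and $(n-1+\lambda r)/(2r)$ are bounded by constants depending only on $L,n,\lambda$, so $F$ is manifestly $C^1$ on the open set $\{L/2<r<L,\ \4h>0\}$ and its derivatives are bounded on any compact subset; restricting to $[L/2,L]\times Q$ gives the uniform $K$ and $N$. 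No genuinely new idea beyond the classical ODE existence theorem is needed, which is why the lemma is stated as following "by standard ODE theory."
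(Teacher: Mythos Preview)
Your proposal is correct and is precisely the argument the paper has in mind: the paper gives no proof at all, merely stating that the lemma follows ``by standard ODE theory,'' and your Picard--Lindel\"of argument on the first-order system with uniform-in-$r_0$ bounds on the compact set $[L/2,L]\times Q$ is exactly that standard theory made explicit.
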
 

We are now ready for the proof of Theorem \ref{h-existence-thm}.

\noindent{\bf Proof of Theorem \ref{h-existence-thm}}: 
Uniqueness of solution of \eqref{h-ode-initial-value-problem} follows from the local uniqueness of solution
of \eqref{h-eqn} in $(0,\3)$ for some small $\3>0$ which satisfies \eqref{h-initial-condition} and from the uniqueness of solution of ODE of \eqref{h-eqn} on $[\3/2,\infty)$ with given values of $h(\3/2)$ and $h_r(\3/2)$.

We next observe that by Lemma \ref{h-local-existence-lem} there exists a constant $\3>0$ such that 
\eqref{h-eqn} has a unique solution $h\in C^2((0,\3))\cap C^1([0,\3))$ in $(0,\3)$ which satisfies
\eqref{h-initial-condition}. Let $[0,L)$ be the maximal interval of existence of solution $h\in C^2((0,L))\cap C^1([0,L))$ of \eqref{h-eqn} in $(0,L)$ which satisfies \eqref{h-initial-condition}. 
Suppose $L<\infty$. Then by Lemma \ref{h-lower-bd-lem}, Lemma \ref{h-upper-bd-lem}, Lemma \ref{h-derivative-upper-bd-lem} and Lemma \ref{h-derivative-lower-bd-lem} there exists constants 
$c_2>c_1>0$ and $c_3>c_4$ such that \eqref{h-uniform-lower-bd}, \eqref{h-uniform-upper-bd}, \eqref{h-derivative-uniform-upper-bd} and \eqref{h-derivative-uniform-lower-bd} hold. 

 Then by Lemma \ref{h-extension-lem}  there exists a constant $0<\delta_1<L$ depending only on $c_1, c_2, c_3, c_4$ such that for any $r_0\in (L/2,L)$ \eqref{h-eqn} has a unique solution $\4{h}\in C^2((r_0-\delta_1,r_0+\delta_1))$ in $(r_0-\delta_1,r_0+\delta_1)$ which satisfies
\eqref{h-tilde-initial-condition} with $b_1=h(r_0)$ and $b_1=h_r(r_0)$. We now set $r_0=L-(\delta_1/2)$ and extend $h$ to a function on $[0,L+(\delta_1/2))$ by setting
$h(r)=\4{h}(r)$ for any $r\in [L,L+(\delta_1/2))$. Then $h\in C^2((0,L+(\delta_1/2)))\cap C^1([0,L+(\delta_1/2)))$ is a solution of \eqref{h-eqn} in $(0,L+\delta_1)$ which satisfies \eqref{h-initial-condition}. This contradicts the choice of $L$. Hence $L=\infty$ and there exists a unique solution $h\in C^2((0,\infty))\cap C^1([0,\infty))$ of \eqref{h-ode-initial-value-problem}.

{\hfill$\square$\vspace{6pt}}

By  Theorem \ref{h-existence-thm} and Lemma \ref{h-monotone-lem} we get 
Theorem \ref{h-steady-soln-property-thm1}.
By Lemma \ref{h-lower-bd-lem}, Lemma \ref{h-upper-bd-lem}, Lemma \ref{h-derivative-upper-bd-lem} and Lemma \ref{h-derivative-lower-bd-lem} and a proof similar to the proof of Theorem \ref{h-existence-thm} we get Theorem \ref{h-lambda-neg-existence-thm}.

By scaling and the uniqueness result of Theorem \ref{h-existence-thm} we have the following corollary.

\begin{cor}
Let $2\le n\le 4$. For any $\mu_1\in\mathbb{R}$ let $h(r;\mu_1)\in C^2((0,\infty))\cap C^1([0,\infty))$ be the unique solution of \eqref{h-steady-soliton-eqn}. Then
\begin{equation*}
h(\mu r;\mu_1)=h(r;\mu\mu_1)\quad\forall \mu>0, \mu_1\in\mathbb{R}.
\end{equation*}

\end{cor}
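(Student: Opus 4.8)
The plan is to establish the scaling identity $h(\mu r;\mu_1)=h(r;\mu\mu_1)$ by showing that the function $r\mapsto h(\mu r;\mu_1)$ solves the same initial value problem \eqref{h-steady-soliton-eqn} (with $\lambda=0$) that $h(r;\mu\mu_1)$ solves, and then invoking the uniqueness part of Theorem \ref{h-existence-thm}. First I would fix $\mu>0$ and $\mu_1\in\mathbb{R}$ and set $\widetilde h(r):=h(\mu r;\mu_1)$. By the chain rule, $\widetilde h_r(r)=\mu h_r(\mu r;\mu_1)$ and $\widetilde h_{rr}(r)=\mu^2 h_{rr}(\mu r;\mu_1)$, and clearly $\widetilde h\in C^2((0,\infty))\cap C^1([0,\infty))$ with $\widetilde h(r)>0$ for all $r>0$ since $h(\cdot;\mu_1)>0$ on $(0,\infty)$.

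Next I would check that $\widetilde h$ satisfies the steady soliton ODE in \eqref{h-steady-soliton-eqn}. Writing $s=\mu r$, we know $2s^2 h(s)h_{ss}(s)=(n-1)h(s)(h(s)-1)+s h_s(s)(s h_s(s)-(n-1))$. The point is that this equation is invariant under the substitution $r=s/\mu$: the left side becomes $2(\mu r)^2 h(\mu r)\cdot\mu^{-2}\widetilde h_{rr}(r)=2r^2\widetilde h(r)\widetilde h_{rr}(r)$, the term $(n-1)h(s)(h(s)-1)$ becomes $(n-1)\widetilde h(r)(\widetilde h(r)-1)$, and $s h_s(s)=\mu r\cdot\mu^{-1}\widetilde h_r(r)=r\widetilde h_r(r)$, so $s h_s(s)(s h_s(s)-(n-1))=r\widetilde h_r(r)(r\widetilde h_r(r)-(n-1))$. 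Hence $\widetilde h$ satisfies \eqref{h-steady-soliton-eqn}$_1$ on $(0,\infty)$. For the initial conditions, $\widetilde h(0)=h(0;\mu_1)=1$ and $\widetilde h_r(0)=\mu h_r(0;\mu_1)=\mu\mu_1$.

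Therefore $\widetilde h$ is a solution in $C^2((0,\infty))\cap C^1([0,\infty))$ of \eqref{h-steady-soliton-eqn} with $\lambda=0$ and initial data $h(0)=1$, $h_r(0)=\mu\mu_1$; that is, it solves the same problem as $h(\cdot;\mu\mu_1)$. By the uniqueness statement in Theorem \ref{h-existence-thm} (valid for $2\le n\le 4$, $\lambda=0$, and the initial slope $\mu\mu_1\in\mathbb{R}$), we conclude $\widetilde h(r)=h(r;\mu\mu_1)$ for all $r\ge 0$, which is the asserted identity. I do not anticipate any serious obstacle here: the only thing to be careful about is verifying that the scaling leaves each term of the ODE invariant (the $r^2$, $r$, and zeroth-order parts all scale consistently precisely because $\lambda=0$ kills the one non-scale-invariant term $\lambda r$), and that the regularity class is preserved under the dilation $r\mapsto\mu r$, both of which are immediate.
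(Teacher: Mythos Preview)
Your proof is correct and is exactly the argument the paper has in mind: the paper simply states that the corollary follows ``by scaling and the uniqueness result of Theorem \ref{h-existence-thm},'' and you have spelled out precisely that scaling computation and uniqueness application. Your observation that the $\lambda r$ term is the only non-scale-invariant one (so $\lambda=0$ is essential) is also on point.
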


\section{Existence of analytic solution}
\setcounter{equation}{0}
\setcounter{thm}{0}

In this section we will use a modification of the power series method of \cite{Br} to proof the existence of unique analytic solutions of \eqref{h-ode-initial-value-problem} for any $n\ge 2$, $\lambda\ge 0$ and $\mu_1\in\Rr$.

\begin{lem}\label{h-local-existence-analytic-soln-lem}
Let $\lambda, \mu_1\in\mathbb{R}$ and $n\ge 2$. Then there exists a constant $\3>0$ such that 
\eqref{h-eqn} has a unique analytic solution $h$ in $[0,\3)$ which satisfies
\eqref{h-initial-condition}.
\end{lem}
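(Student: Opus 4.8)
The plan is to construct the solution as a convergent power series about $r=0$ and then to read off uniqueness from the recursion for its coefficients. Substituting $h(r)=\sum_{k\ge 0}a_kr^k$ into \eqref{h-eqn} and expanding each side by Cauchy products, I would equate the coefficient of $r^m$ for every $m\ge 0$. The $m=0$ relation is $(n-1)a_0(a_0-1)=0$, so together with $h(0)=1$ (and $h>0$) it forces $a_0=1$; the $m=1$ relation turns out to be an identity, so $a_1$ is not constrained by the equation and $h_r(0)=\mu_1$ fixes $a_1=\mu_1$. For $m\ge 2$ the coefficient of $a_m$ in the resulting identity simplifies to $(m-1)(2m+n-1)$, which is positive for $n\ge 2$, so $a_m$ is uniquely determined by the previous coefficients through a recursion of the form
\begin{equation*}
(m-1)(2m+n-1)\,a_m=-\sum_{\substack{i+k=m\\ i,k\ge 1}}\bigl(2k(k-1)-ik-(n-1)\bigr)a_ia_k-\lambda(m-1)a_{m-1},\qquad m\ge 2 .
\end{equation*}
In particular there is at most one formal power series with the prescribed $a_0,a_1$, which yields uniqueness of the analytic solution, since any analytic solution coincides with its Taylor series and its coefficients are forced to be these $a_m$.

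The main work is to show this formal series has positive radius of convergence, which I would do by the method of majorants. Dividing the recursion by $(m-1)(2m+n-1)$ and using that, for all $m\ge 2$ and all $i+k=m$ with $i,k\ge 1$,
\begin{equation*}
\frac{|2k(k-1)-ik-(n-1)|}{(m-1)(2m+n-1)}\le K \qquad\text{and}\qquad \frac{|\lambda|}{2m+n-1}\le\kappa
\end{equation*}
for constants $K=K(n)$ and $\kappa=\kappa(n,\lambda)$ (the first ratio is $O(1)$ because numerator and denominator are comparable quadratics in $m$, the second is at most $|\lambda|/(n+3)$), I obtain the scalar inequality $|a_m|\le K\sum_{i=1}^{m-1}|a_i|\,|a_{m-i}|+\kappa\,|a_{m-1}|$ for $m\ge 2$. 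A geometric majorant $|a_m|\le CR^m$ does not close the induction, since the convolution then produces a factor $m-1$; instead I would prove $|a_m|\le B\rho^m/m^2$ for all $m\ge 1$ by induction, using the elementary bound
\begin{equation*}
\sum_{i=1}^{m-1}\frac{1}{i^2(m-i)^2}\le \frac{C'}{m^2},
\end{equation*}
which follows from $\tfrac{1}{i(m-i)}=\tfrac1m\bigl(\tfrac1i+\tfrac1{m-i}\bigr)$ and $\sum_{i=1}^{m-1}\tfrac1i=O(\log m)$. Choosing $B$ small enough that $KC'B\le\tfrac12$, and then $\rho$ large enough that $B\rho\ge|\mu_1|$ and $\kappa\,\rho^{m-1}/(m-1)^2\le\tfrac12\,\rho^m/m^2$ for every $m\ge 2$ (which holds once $\rho\ge 8\kappa$, since $m^2/(m-1)^2\le 4$), the induction goes through. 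Hence $\sum a_kr^k$ converges for $|r|<\varepsilon:=\rho^{-1}$ and defines a real-analytic $h$ on $(-\varepsilon,\varepsilon)$; shrinking $\varepsilon$ we may assume $h>0$ on $[0,\varepsilon)$. Since term-by-term differentiation and multiplication of power series are legitimate inside the radius of convergence, the coefficient identities used to derive the recursion are precisely the statement that $h$ solves \eqref{h-eqn} on $(0,\varepsilon)$, while $h(0)=1$ and $h_r(0)=\mu_1$ hold by construction.

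The step I expect to be the obstacle is this convergence estimate: one must use the $m^{-2}$–weighted majorant rather than a naive geometric one, and must verify the two bounded-ratio inequalities for the recursion coefficients uniformly in $m\ge 2$ (the small values of $m$ need to be checked by hand, since the favorable asymptotics only take effect for large $m$). Everything else — deriving the recursion, handling $m=0,1$, and deducing uniqueness — is routine bookkeeping with power series.
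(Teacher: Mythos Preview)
Your proposal is correct and follows the same overall power-series/majorant strategy as the paper, but the convergence argument is genuinely different. The paper, after deriving the same recursion (in the symmetrized form $(k-1)(2k+n-1)c_k+\lambda(k-1)c_{k-1}+\sum_{j=1}^{k-1}\bigl(3j^2-(2+k)j-(n-1)\bigr)c_jc_{k-j}=0$, which agrees with yours upon symmetrizing in $i\leftrightarrow k$), bounds the quadratic coefficient by $\tfrac{5}{2}(k-1)(2k+n-1)$ and obtains $|c_k|\le |\lambda|\,|c_{k-1}|+\tfrac{5}{2}\sum_{j=1}^{k-1}|c_j|\,|c_{k-j}|$. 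It then defines a majorant series $b(r)=\sum_{k\ge 1} c_k'r^k$ by turning this inequality into an \emph{equality} for the $c_k'$; the resulting generating-function identity is the quadratic $\tfrac{5}{2}b^2-(1-|\lambda|r)b+|\mu_1|r=0$, which has the explicit analytic solution $b(r)=\tfrac{1}{5}\bigl(1-|\lambda|r-\sqrt{(1-|\lambda|r)^2-10|\mu_1|r}\,\bigr)$ near $r=0$, and the radius of convergence drops out immediately. Your $m^{-2}$-weighted inductive estimate with the convolution bound $\sum_{i=1}^{m-1}\tfrac{1}{i^2(m-i)^2}\le C'/m^2$ is a perfectly valid alternative and is more elementary (no generating functions), but it requires more bookkeeping with the constants $K,\kappa,B,\rho,C'$. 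Incidentally, your remark that ``a geometric majorant $|a_m|\le CR^m$ does not close the induction'' is exactly what the paper's generating-function trick sidesteps: because the $c_k'$ satisfy an \emph{equality} whose generating function is a closed-form analytic function, they are automatically $O(R^k)$, even though one cannot verify this by induction on the inequality alone.
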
 
\begin{proof}
Since local analytic solution of \eqref{h-eqn}, \eqref{h-initial-condition}, about $r=0$ is unique, we only need to proof existence of local analytic solution of \eqref{h-eqn}, \eqref{h-initial-condition}, about $r=0$. We will use a modification of the proof of Proposition 1 of \cite{Br} to proof this lemma. Since the proof is similar to the proof of Proposition 1 of \cite{Br} we will only sketch the argument here. Suppose the solution $h$ of \eqref{h-eqn}, \eqref{h-initial-condition}, near the origin is of the form,
\begin{equation}\label{h-power-series}
h(r)=\sum_{i=0}^{\infty}c_ir^i
\end{equation}
for some constants $c_1,c_2,\dots$. Substituting \eqref{h-power-series} into \eqref{h-eqn}, \eqref{h-initial-condition}, and equating the coefficients of $r^i$, $i\in\Z^+$, we get that \eqref{h-power-series} satisfies \eqref{h-eqn} and \eqref{h-initial-condition} if and only if
\begin{equation}\label{c0-c1}
c_0=1,\quad c_1=\mu_1,
\end{equation}
and
\begin{equation}\label{ci-recurrence-relation}
(k-1)(2k+n-1)c_k+\lambda (k-1)c_{k-1}+\sum_{1\le j\le k-1}(3j^2-(2+k)j-(n-1))c_jc_{k-j}=0\quad\forall k\ge 2.
\end{equation}
Let $c_k$, $k\ge 2$, be given uniquely in terms of $c_1,\dots, c_{k-1}$, by the recurrence relation \eqref{ci-recurrence-relation}.
Note that
\begin{align}\label{term-by-term-comparison}
|3j^2-(2+k)j-(n-1)|\le&(3j-2)j+(2k+n-1)j\qquad\qquad\qquad\quad\,\,\,\forall 1\le j\le k-1,k\ge 2\notag\\
\le&(3(k-1)-2)(k-1)+(2k+n-1)(k-1)\quad\forall 1\le j\le k-1,k\ge 2\notag\\
\le&\frac{3}{2}(2k+n-1)(k-1)+(2k+n-1)(k-1)\quad\forall 1\le j\le k-1,k\ge 2\notag\\
\le&\frac{5}{2}(2k+n-1)(k-1)\qquad\qquad\qquad\qquad\quad\,\,\forall 1\le j\le k-1,k\ge 2.
\end{align}
Hence by \eqref{ci-recurrence-relation} and \eqref{term-by-term-comparison},
\begin{align}\label{ci-ineqn}
&(k-1)(2k+n-1)|c_k|\le|\lambda|(k-1)|c_{k-1}|+\frac{5}{2}(2k+n-1)(k-1)\sum_{1\le j\le k-1}|c_j||c_{k-j}|
\quad\forall k\ge 2\notag\\
\Rightarrow\quad&|c_k|\le|\lambda||c_{k-1}|+\frac{5}{2}\sum_{1\le j\le k-1}|c_j||c_{k-j}|
\quad\forall k\ge 2.
\end{align}
Let
\begin{equation}\label{b-power-series}
b(r)=\sum_{i=1}^{\infty}c_i'r^i
\end{equation}
where 
\begin{equation}\label{ck'-defn}
c_1'=|\mu_1|\quad\mbox{ and }\quad c_k'=|\lambda|c_{k-1}'+\frac{5}{2}\sum_{1\le j\le k-1}c_j'c_{k-j}'
\quad\forall k\ge 2.
\end{equation}
By \eqref{c0-c1}, \eqref{ci-ineqn}, \eqref{ck'-defn} and an induction argument on the sequences $\{c_k\}_{k=1}^{\infty}$, $\{c_k'\}_{k=1}^{\infty}$, we get,
\begin{equation}\label{ck-ck'-ineqn}
|c_k|\le c_k'\quad\forall k\ge 1.
\end{equation}
Hence the series \eqref{h-power-series} will be convergent if the series  \eqref{b-power-series} is convergent. Now by \eqref{ck'-defn} $b(r)$ satisfies
\begin{align}\label{b-eqn}
&b(r)=|\lambda|rb(r)+\frac{5}{2}b(r)^2+c_1'r\notag\\
\Leftrightarrow\quad&\frac{5}{2}b(r)^2-(1-|\lambda|r)b(r)+c_1'r=0.
\end{align}
Note that \eqref{b-eqn} has a explicit solution of the form,
\begin{equation}\label{b-explicit-form}
b(r)=\frac{1-|\lambda|r-\sqrt{(1-|\lambda|r)^2-10c_1'r}}{5}\quad\forall |r|\le\3
\end{equation}
where
\begin{equation*}
\3=\min\left(\frac{1}{10(|\lambda|+1)},\frac{1}{200(c_1'+1)}\right)
\end{equation*}
and the function given by \eqref{b-explicit-form} has a convergent Taylor series expansion on $[0,\3)$
with radius of convergence about $r=0$ greater than or equal to $\3$. By \eqref{ck-ck'-ineqn} the series \eqref{h-power-series} for $h$ also has radius of convergence about $r=0$ greater than or equal to $\3$. Hence the lemma follows.
 
\end{proof}

By Lemma \ref{h-local-existence-analytic-soln-lem} and an argument similar to the proof of Theorem \ref{h-existence-thm} Theorem \ref{h-existence-analytic-soln-thm} follows.
By scaling and the uniqueness result of Theorem \ref{h-existence-analytic-soln-thm} we have the following corollary.

\begin{cor}
Let $n\ge 2$. For any $\mu_1\in\mathbb{R}$ let $h(r;\mu_1)$ be the unique analytic solution of \eqref{h-steady-soliton-eqn} on $[0,\infty)$. Then
\begin{equation*}
h(\mu r;\mu_1)=h(r;\mu\mu_1)\quad\forall \mu>0, \mu_1\in\mathbb{R}.
\end{equation*}

\end{cor}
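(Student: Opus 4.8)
The plan is to exploit the scale-invariance of equation \eqref{h-steady-soliton-eqn} (the $\lambda=0$ case of \eqref{h-eqn}) under the dilation $r\mapsto\mu r$, combined with the uniqueness statement of Theorem \ref{h-existence-analytic-soln-thm}. Fix $\mu>0$ and $\mu_1\in\Rr$, and set $\4{h}(r):=h(\mu r;\mu_1)$. Since $r\mapsto\mu r$ is analytic and $h(\cdot;\mu_1)$ is analytic and positive on $[0,\infty)$, the function $\4{h}$ is analytic and positive on $[0,\infty)$ as well, with $\4{h}(0)=h(0;\mu_1)=1$ and $\4{h}_r(0)=\mu h_r(0;\mu_1)=\mu\mu_1$.

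First I would verify that $\4{h}$ again solves \eqref{h-steady-soliton-eqn}. Writing $\4{h}_r(r)=\mu h_r(\mu r)$ and $\4{h}_{rr}(r)=\mu^2 h_{rr}(\mu r)$, one has $2r^2\4{h}(r)\4{h}_{rr}(r)=2(\mu r)^2 h(\mu r)h_{rr}(\mu r)$, and evaluating \eqref{h-steady-soliton-eqn} for $h(\cdot;\mu_1)$ at the point $s=\mu r$ turns the right-hand side into $(n-1)h(\mu r)(h(\mu r)-1)+(\mu r)h_r(\mu r)\bigl((\mu r)h_r(\mu r)-(n-1)\bigr)$. Since $h(\mu r)=\4{h}(r)$ and $(\mu r)h_r(\mu r)=r\4{h}_r(r)$, this equals $(n-1)\4{h}(r)(\4{h}(r)-1)+r\4{h}_r(r)\bigl(r\4{h}_r(r)-(n-1)\bigr)$, so $\4{h}$ satisfies \eqref{h-steady-soliton-eqn} on $(0,\infty)$. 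It is precisely here that $\lambda=0$ is used: the term $\lambda r\cdot rh_r$ in \eqref{h-eqn} picks up an extra factor of $\mu$ under the dilation and would destroy the invariance, which is why the corollary is stated for \eqref{h-steady-soliton-eqn} rather than for \eqref{h-ode-initial-value-problem}.

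Then I would conclude by uniqueness. By Theorem \ref{h-existence-analytic-soln-thm} there is a unique analytic solution of \eqref{h-steady-soliton-eqn} on $[0,\infty)$ for each prescribed value of $h_r(0)$; since $\4{h}$ and $h(\cdot;\mu\mu_1)$ are both analytic solutions on $[0,\infty)$ with $h_r(0)=\mu\mu_1$, they coincide, i.e.\ $h(\mu r;\mu_1)=\4{h}(r)=h(r;\mu\mu_1)$ for all $r\ge0$. There is no serious obstacle here: the argument is just the dilation computation plus an appeal to uniqueness, and the only point requiring a moment's care is checking that $\4{h}$ inherits analyticity and positivity on the full half-line $[0,\infty)$, so that Theorem \ref{h-existence-analytic-soln-thm} genuinely applies to it.
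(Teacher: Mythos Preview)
Your argument is correct and is exactly the approach the paper indicates: it states the corollary follows ``by scaling and the uniqueness result of Theorem \ref{h-existence-analytic-soln-thm}'' without giving further details, and your proposal simply spells out that scaling computation and the appeal to uniqueness. Your remark that the $\lambda r$ term would spoil the dilation invariance is also the correct explanation for why the corollary is formulated only for \eqref{h-steady-soliton-eqn}.
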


\section{Asymptotic behaviour of rotationally symmetric steady gradient Ricci soliton}
\setcounter{equation}{0}
\setcounter{thm}{0}

In this section we will prove the asymptotic behaviour of the solution $h$ of \eqref{h-steady-soliton-eqn}
as $r\to\infty$. We will assume that $\lambda=0$, $\mu_1<0$, $n\ge 2$ and $h\in C^2((0,\infty))\cap C^1([0,\infty))$ is a solution of \eqref{h-steady-soliton-eqn}  in this section.
By Theorem \ref{h-steady-soln-property-thm1} \eqref{hr-sign} holds. Hence
\begin{equation}\label{h-limit-at-infty}
c_0=\lim_{r\to\infty} h(r)\in [0,1)\quad\mbox{ exists}
\end{equation}
and
\begin{equation}\label{h-range}
c_0<h(r)<1\quad\forall r>0.
\end{equation}
We will prove that $c_0=0$. We now let
\begin{equation}\label{q-defn}
q(r)=\frac{rh_r(r)}{h(r)}\quad\forall r\ge 0.
\end{equation}
Then by \eqref{hr-sign},
\begin{equation}\label{q-neg}
q(r)<0\quad\forall r>0.
\end{equation} 
Note that by \eqref{h-steady-soliton-eqn} and a direct computation $q$ satisfies
\begin{equation}\label{qr-eqn10}
q_r(r)=\frac{q(r)}{r}\left(1-\frac{n-1}{2h(r)}\right)-\left(\frac{q(r)^2}{2r}+(n-1)\frac{1-h(r)}{2rh(r)}\right)=-\frac{H(q(r),r)}{2r}\quad\forall r>0
\end{equation}
where
\begin{equation}\label{H-b-r-defn}
H(b,r)=\frac{-4nh(r)^2+8(n-1)h(r)-(n-1)^2}{4h(r)^2}+\left(b+\frac{n-1-2h(r)}{2h(r)}\right)^2\quad\forall r>0, b\in\mathbb{R}.
\end{equation}

\begin{lem}\label{h-infty-to-0-lem}
Let $\mu_1<0$ and $n\ge 2$. Let $h\in C^2((0,\infty))\cap C^1([0,\infty))$ be  a solution  of
\eqref{h-steady-soliton-eqn} and $c_0$, $q$, be given by \eqref{h-limit-at-infty} and \eqref{q-defn} respectively.  If there exist constants $r_0>0$ and $C>0$ such that
\begin{equation}\label{q-uniform-upper-bd10}
q(r)\le -C\quad\forall r\ge r_0,
\end{equation}
then $c_0=0$.
\end{lem}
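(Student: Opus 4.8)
The plan is to observe that the hypothesis \eqref{q-uniform-upper-bd10} is, by the very definition \eqref{q-defn} of $q$, a differential inequality for $\log h$. Since $h$ is continuous and strictly positive on $[0,\infty)$ by \eqref{h-range}, for any $r>r_0$ the function $\log h$ is $C^1$ on $[r_0,r]$ and
\[
\frac{d}{ds}\log h(s)=\frac{h_r(s)}{h(s)}=\frac{q(s)}{s}\le -\frac{C}{s}\qquad\text{for all }s\ge r_0 .
\]

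Next I would integrate this inequality over $[r_0,r]$, which gives
\[
\log h(r)-\log h(r_0)\le -C\,(\log r-\log r_0),\qquad\text{i.e.}\qquad 0<h(r)\le h(r_0)\Bigl(\frac{r_0}{r}\Bigr)^{C}\qquad\forall r\ge r_0 .
\]
Letting $r\to\infty$ and using $C>0$ forces $h(r)\to 0$, and comparing with \eqref{h-limit-at-infty} we conclude $c_0=0$, which is the assertion of the lemma.

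I expect no genuine obstacle in this argument; it is essentially a one-line integration once the differential inequality is recognized. The only point deserving a word is that $\log h$ is legitimately differentiable on each interval $[r_0,r]$, which holds because $h$ is continuous and positive there (hence bounded below by a positive constant on that compact interval), even though at this stage we do not yet know whether $h$ stays bounded away from $0$ globally. The real difficulty of proving $c_0=0$ lies entirely in establishing a bound of the form \eqref{q-uniform-upper-bd10}, namely that $q$ remains below a negative constant for all large $r$; the present lemma merely records that such a bound is sufficient, reducing the asymptotic analysis of $h$ to that of $q$.
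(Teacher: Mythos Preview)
Your proposal is correct and is essentially identical to the paper's own proof: both rewrite \eqref{q-uniform-upper-bd10} as $(\log h)'(s)\le -C/s$, integrate over $[r_0,r]$ to obtain $h(r)\le h(r_0)(r_0/r)^C$, and let $r\to\infty$ to conclude $c_0=0$. Your additional remarks on the differentiability of $\log h$ and on where the genuine difficulty lies are accurate but not needed for the argument.
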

\begin{proof}
By \eqref{q-uniform-upper-bd10},
\begin{align*}
\frac{rh_r(r)}{h(r)}\le -C\quad\forall r\ge r_0\quad
\Rightarrow\quad&\log(h(r)/h(r_0))\le -C\log(r/r_0)\quad\forall r\ge r_0\notag\\
\Rightarrow\quad&0\le c_0<h(r)\le\frac{r_0^Ch(r_0)}{r^C}\quad\forall r\ge r_0\notag\\
\Rightarrow\quad&0\le c_0<h(r)\to 0\quad\mbox{ as }r\to\infty
\end{align*}
and the lemma follows.
\end{proof}

We are now ready to prove Theorem \ref{steady-soln-asymptotic-behaviour-thm}.

\noindent{\bf Proof of Theorem \ref{steady-soln-asymptotic-behaviour-thm}}: Suppose
\begin{equation}\label{c0-not=0}
0<c_0<1.
\end{equation}
Then by \eqref{h-limit-at-infty},
\begin{equation}\label{H-limit}
\underset{\substack{r\to\infty\\b\to 0}}{\lim}H(b,r)
=\frac{-4nc_0^2+8(n-1)c_0-(n-1)^2}{4c_0^2}+\frac{(n-1-2c_0)^2}{4c_0^2}
=\frac{(n-1)c_0(1-c_0)}{c_0^2}=:C_5>0 \,\mbox{(say)}.
\end{equation}
By \eqref{H-limit} there exists $r_0'>0$ and $b_0>0$ such that
\begin{equation}\label{H-uniformly-upper-lower-bd-at-infty}
C_5/2<H(b,r)<3C_5/2\quad\forall r\ge r_0', |b|\le b_0.
\end{equation}
We now claim that there exist constants $C>0$ and $r_0>0$ such that \eqref{q-uniform-upper-bd10} holds. Suppose the claim does not hold. Then there exists a sequence $\{r_i\}_{i=1}^{\infty}\subset (r_0',\infty)$, $r_i<r_{i+1}$ for any $i\in\mathbb{Z}^+$ and $r_i\to\infty$ as $i\to\infty$, such that 
\begin{equation*}
q(r_i)\to 0\quad\mbox{ as }i\to\infty.
\end{equation*}
Without loss of generality we may assume that
\begin{equation}\label{q-sequence-bd-below}
0>q(r_i)>-b_0\quad\forall i\in\mathbb{Z}^+.
\end{equation} 
Suppose there exists $i_0\in\mathbb{Z}^+$ and $s_0\in (r_{i_0},r_{i_0+1})$ such that
$q(s_0)<-b_0$. Let
\begin{equation}\label{s1-defn}
s_1=\sup\{s'>s_0:q(s)<-b_0\quad\forall s_0\le s<s'\}.
\end{equation}
By \eqref{q-sequence-bd-below} and \eqref{s1-defn}, $s_0<s_1<r_{i_0+1}$,
\begin{align}
&q(s)<-b_0\quad\forall s_0\le s<s_1\quad\mbox{ and }\quad q(s_1)=-b_0\label{q=b0}\\
\Rightarrow\quad&q_r(s_1)\ge 0.\label{qr-positive3}
\end{align}
On the other hand by \eqref{qr-eqn10}, \eqref{H-uniformly-upper-lower-bd-at-infty} and \eqref{q=b0},
\begin{equation*}
q_r(s_1)=-\frac{H(q_r(s_1), s_1)}{2s_1}\le-\frac{C_5}{4s_1} <0
\end{equation*}
which contradicts \eqref{qr-positive3}. Thus no such constants $i_0$ and $s_0$ exist
and
\begin{equation}\label{q-uniformly-bd-below1}
0>q(r)\ge -b_0\quad\forall r\ge r_1.
\end{equation}
Then by \eqref{qr-eqn10}, \eqref{H-uniformly-upper-lower-bd-at-infty} and \eqref{q-uniformly-bd-below1},
\begin{align*}
&q_r(r)=-\frac{H(q_r(r), r)}{2r}\le-\frac{C_5}{4r} <0\quad\forall r\ge r_1\notag\\
\Rightarrow\quad&q(r)\le q(r_1)\quad\forall r\ge r_1.
\end{align*}
This contradicts the assumption that there do not exist any constants $C>0$ and $r_0>0$ such that \eqref{q-uniform-upper-bd10} holds. Hence there exist constants $C>0$ and $r_0>0$ such that \eqref{q-uniform-upper-bd10} holds. Then by Lemma \ref{h-infty-to-0-lem} $c_0=0$ and the theorem follows.

{\hfill$\square$\vspace{6pt}}

We now let 
\begin{equation}\label{u-defn}
u(r)=rh(r).
\end{equation}
and
\begin{equation}\label{p-defn}
p(r)=rh_r(r)\quad\forall r\ge 0.
\end{equation}
Then by \eqref{hr-sign},
\begin{equation}\label{p-u-neg}
p(r)<0\quad\forall r>0\quad\Rightarrow\quad u_r(r)=rh_r(r)+h(r)=p(r)+h(r)<h(r)<1\quad\forall r>0.
\end{equation}
By \eqref{h-steady-soliton-eqn} and a direct computation $p$ satisfies
\begin{equation}\label{p-eqn}
p_r(r)=\frac{(p(r)+h(r))[p(r)-(n-1)(1-h(r))]-(n-2)h(r)p(r)}{2u(r)}\quad\forall r>0.
\end{equation}
By \eqref{p-u-neg}, \eqref{p-eqn} and a direct computation  $u$ satisfies
\begin{equation}\label{u-eqn}
u_{rr}(r)=\frac{-u_r(r)(n-1-2h(r)-u_r(r))+(n-4)h(r)^2}{2rh(r)}\quad\forall r>0.
\end{equation}

\begin{lem}\label{ur-positive-negative-lem}
Let $\mu_1<0$ and $n\ge 2$. Let $h\in C^2((0,\infty))\cap C^1([0,\infty))$ be  a solution  of
\eqref{h-steady-soliton-eqn} and $u$ be given by  \eqref{u-defn}.  Then for any $n\ge 4$, 
\begin{equation}\label{ur>0}
u_r(r)>0\quad\forall r>0.
\end{equation}
When $2\le n<4$, then either \eqref{ur>0} holds 
or there exists a  constant $s_0>0$ such that
\begin{equation}\label{ur<0}
\left\{\begin{aligned}
&u_r(r)>0\quad\forall 0<r<s_0\\
&u_r(s_0)=0\\
&u_r(r)<0\quad\forall r>s_0\end{aligned}\right.
\end{equation}
holds. Moreover for any $n\ge 2$ when \eqref{ur>0} holds,
\begin{equation}\label{ur-goes-to-0}
\lim_{r\to\infty}u_r(r)=0.
\end{equation}
\end{lem}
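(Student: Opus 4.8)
\emph{Proof proposal.} The plan is to follow the sign of $u_r$ along $(0,\infty)$ by combining the second–order equation \eqref{u-eqn} with the elementary bound $u_r<h$ and the already–established limit $h(r)\to 0$. First I would record the behaviour at the origin: since $u=rh$ with $h\in C^1([0,\infty))\cap C^2((0,\infty))$, the function $u_r(r)=h(r)+rh_r(r)$ is continuous on $[0,\infty)$ and $C^1$ on $(0,\infty)$, with $u_r(0)=h(0)=1>0$; and by \eqref{hr-sign} $p(r)=rh_r(r)<0$ for $r>0$, so $u_r(r)<h(r)<1$ there, as recorded in \eqref{p-u-neg}. The single identity driving the whole argument is obtained by evaluating \eqref{u-eqn} at any point $s>0$ with $u_r(s)=0$: it yields $u_{rr}(s)=(n-4)h(s)/(2s)$, whose sign is that of $n-4$.

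For $n\ge 4$ I would show $u_r$ has no zero at all. If $Z=\{r>0:u_r(r)=0\}\ne\emptyset$, then continuity together with $u_r(0)=1$ forces $s_0:=\inf Z>0$, $s_0\in Z$, and $u_r>0$ on $(0,s_0)$. When $n>4$ we get $u_{rr}(s_0)=(n-4)h(s_0)/(2s_0)>0$, which is impossible since $u_r$ decreases to $0$ at $s_0$ and hence $u_{rr}(s_0)\le 0$; so $u_r>0$ on $(0,\infty)$. When $n=4$ the first–order test degenerates ($u_{rr}(s_0)=0$), so I would instead read \eqref{u-eqn}, which for $n=4$ is $u_{rr}=u_r(u_r+2h-3)/(2rh)$, as a scalar ODE $\frac{d}{dr}u_r=\Psi(r,u_r)$ with $\Psi(r,v)=v(v+2h(r)-3)/(2rh(r))$; since $h\in C^2$ and $h>0$, $\Psi$ is continuous in $r$ and smooth in $v$ near $(s_0,0)$, and $v\equiv 0$ solves it, so uniqueness forces $u_r\equiv 0$ on a neighbourhood of $s_0$, contradicting $u_r>0$ on $(0,s_0)$. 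Hence $u_r>0$ on $(0,\infty)$ for all $n\ge 4$, which is \eqref{ur>0}.

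For $2\le n<4$ the identity now gives $u_{rr}(s)<0$ at \emph{every} zero $s$ of $u_r$, so $u_r$ is strictly decreasing through each of its zeros; since $u_r(0)=1>0$ it can have at most one zero, and if it has one, call it $s_0$, then $u_r>0$ on $(0,s_0)$, $u_r(s_0)=0$, and $u_r<0$ on $(s_0,\infty)$, which is exactly the alternative \eqref{ur<0}. Finally, whenever \eqref{ur>0} holds (any $n\ge 2$), the sandwich $0<u_r(r)<h(r)$ together with $\lim_{r\to\infty}h(r)=0$ from Theorem \ref{steady-soln-asymptotic-behaviour-thm} gives \eqref{ur-goes-to-0} by squeezing. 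I expect the only genuine obstacle to be the borderline case $n=4$, where the sign of $u_{rr}$ at a zero of $u_r$ vanishes and the monotonicity test must be replaced by an ODE–uniqueness argument; the remainder is bookkeeping with \eqref{u-eqn}, \eqref{p-u-neg} and Theorem \ref{steady-soln-asymptotic-behaviour-thm}.
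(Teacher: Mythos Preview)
Your proposal is correct and follows essentially the same approach as the paper: both use the identity $u_{rr}(s)=(n-4)h(s)/(2s)$ at zeros of $u_r$ (from \eqref{u-eqn}), split into the cases $n>4$, $n=4$, and $2\le n<4$, handle the borderline $n=4$ by an ODE--uniqueness argument against the zero solution, and conclude \eqref{ur-goes-to-0} via the sandwich $0<u_r<h$ and Theorem~\ref{steady-soln-asymptotic-behaviour-thm}. Your presentation of the $2\le n<4$ case (``$u_r$ is strictly decreasing through every zero, hence at most one'') is a slightly more compact phrasing of the paper's explicit supremum argument, but the content is identical.
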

\begin{proof}
Since $u_r(0)=h(0)=1$, there exists a constant $\delta_1>0$ such that $u_r(r)>0$ for any $0\le r<\delta_1$. Let
\begin{equation*}
s_0=\sup\{s'>0:u_r(r)>0\quad\forall 0\le r<s'\}.
\end{equation*}
Then $s_0\ge\delta_1$. If $s_0=\infty$, then \eqref{ur>0} holds. Suppose $s_0<\infty$. Then 
\begin{align}\label{urr-neg}
&u_r(r)>0\quad\forall 0<r<s_0\quad\mbox{ and }\quad u_r(s_0)=0\notag\\
\Rightarrow\quad&u_{rr}(s_0)\le 0.
\end{align} 
We now divide the proof into three cases.

\noindent $\underline{\text{\bf Case 1}}$: $n>4$.

\noindent  By \eqref{u-eqn},
\begin{equation}
u_{rr}(s_0)=\frac{(n-4)h(s_0)}{2s_0}>0.
\end{equation}
which contradicts \eqref{urr-neg}. Hence $s_0=\infty$ and \eqref{ur>0} holds. 

\noindent $\underline{\text{\bf Case 2}}$: $n=4$.

\noindent By \eqref{u-eqn} both the functions $u_r$ and
\begin{equation*}
\widetilde{w}(r)=0\quad\forall r\ge 0
\end{equation*}
satisfy 
\begin{equation*}
\widetilde{w}_{r}(r)=-\frac{\widetilde{w}(r)(n-1-2h(r)-\widetilde{w}(r))}{2rh(r)}\quad\forall r>0.
\end{equation*}
and $\widetilde{w}(s_0)=0=u_r(s_0)$. Thus by uniqueness of ODE, 
\begin{equation*}
u_r(r)=\widetilde{w}(r)=0\quad\forall r>0\quad\Rightarrow\quad 1=u_r(0)=\lim_{r\to 0}u_r(r)=0
\end{equation*}
and contradiction arises. Hence $s_0=\infty$ and \eqref{ur>0} holds.

\noindent $\underline{\text{\bf Case 3}}$: $2\le n<4$.

\noindent By \eqref{u-eqn},
\begin{equation*}
u_{rr}(s_0)=\frac{(n-4)h(s_0)}{2s_0}<0.
\end{equation*}
Then there exists a constant $s_1>s_0$ such that
\begin{align*}
&u_{rr}(r)<0\quad\forall s_0\le r<s_1\notag\\
\Rightarrow\quad&u_r(r)<u_r(s_0)=0\quad\forall s_0<r<s_1.
\end{align*}
Let
\begin{equation*}
s_2=\sup\{s'>0:u_r(r)<0\quad\forall s_0<r<s'\}.
\end{equation*}
Then $s_2\ge s_1$. If $s_2<\infty$, then 
\begin{align}
&u_r(r)<0\quad\forall s_0<r<s_2 \quad\mbox{ and }\quad u_r(s_2)=0\label{ur-zero5}\\
\Rightarrow\quad&u_{rr}(s_2)\ge 0.\label{urr-postive}
\end{align}
By \eqref{u-eqn} and \eqref{ur-zero5},
\begin{equation*}
u_{rr}(s_2)=\frac{(n-4)h(s_2)}{2s_2}<0\quad\forall 2\le n<4
\end{equation*}
which contradicts \eqref{urr-postive}. Hence $s_2=\infty$ and \eqref{ur<0} holds with 
$s_0\in (0,\infty)$. Finally for any $n\ge 2$ when  \eqref{ur>0} holds, by \eqref{p-u-neg}, \eqref{ur>0} and Theorem \ref{steady-soln-asymptotic-behaviour-thm} we get \eqref{ur-goes-to-0} and the lemma follows.
\end{proof}

By \eqref{hr-sign}, \eqref{p-u-neg} and Lemma \ref{ur-positive-negative-lem} we have the following lemma.

\begin{lem}\label{q-bded-lem}
Let $\mu_1<0$ and $n\ge 2$. Suppose $h\in C^2((0,\infty))\cap C^1([0,\infty))$ is a solution  of \eqref{h-steady-soliton-eqn}. Let $q$ be given by \eqref{q-defn}. Suppose \eqref{ur>0} holds. 
Then 
\begin{equation*}\label{q-bded-eqn}
-1<q(r)<0\quad\forall r>0.
\end{equation*}
\end{lem}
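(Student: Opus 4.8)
The plan is to prove the two-sided bound $-1<q(r)<0$ for all $r>0$ under the hypothesis that $u_r(r)>0$ everywhere. The sign $q(r)<0$ is already recorded in \eqref{q-neg}, coming directly from \eqref{hr-sign}, so the entire content is the lower bound $q(r)>-1$. First I would translate the hypothesis \eqref{ur>0} into a statement about $q$. Using $u(r)=rh(r)$ and \eqref{p-u-neg} we have $u_r(r)=p(r)+h(r)=rh_r(r)+h(r)=h(r)(q(r)+1)$, and since $h(r)>0$ everywhere by Theorem \ref{h-steady-soln-property-thm1}, the inequality $u_r(r)>0$ is \emph{equivalent} to $q(r)+1>0$, i.e. $q(r)>-1$. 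Combined with \eqref{q-neg} this immediately gives $-1<q(r)<0$ for all $r>0$.

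So the proof is essentially a one-line computation once the pieces from the excerpt are assembled: expand $u_r$ via the product rule, factor out $h$, invoke positivity of $h$ from Theorem \ref{h-steady-soln-property-thm1} (or equivalently \eqref{hr-sign}), and read off the sign from the assumed \eqref{ur>0} together with \eqref{p-u-neg}. I would write it as: by \eqref{u-defn} and \eqref{p-defn},
\[
u_r(r)=rh_r(r)+h(r)=h(r)\left(\frac{rh_r(r)}{h(r)}+1\right)=h(r)(q(r)+1)\quad\forall r>0,
\]
and since $h(r)>0$ for all $r>0$, the hypothesis \eqref{ur>0} forces $q(r)+1>0$, while \eqref{q-neg} gives $q(r)<0$; hence $-1<q(r)<0$ for all $r>0$, as claimed.

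There is essentially no obstacle here; the lemma is a bookkeeping consequence of the algebraic identity relating $u_r$, $h$, and $q$, and the statement of the lemma already flags the relevant inputs (\eqref{hr-sign}, \eqref{p-u-neg}, Lemma \ref{ur-positive-negative-lem}). The only thing to be careful about is that the equivalence is genuinely an equivalence, so no extra analysis (no ODE comparison, no maximum-principle argument) is needed — the hard work was done in Lemma \ref{ur-positive-negative-lem}, which established when \eqref{ur>0} actually holds. If anything, the ``main step'' is simply recognizing that $u_r/h = q+1$, after which both inequalities are immediate.
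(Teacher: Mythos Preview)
Your proposal is correct and matches the paper's approach exactly: the paper states that the lemma follows directly from \eqref{hr-sign}, \eqref{p-u-neg}, and Lemma \ref{ur-positive-negative-lem} without spelling out the computation, and your identity $u_r(r)=h(r)(q(r)+1)$ is precisely the one-line link those references encode.
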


Note that by \eqref{h-infty=0} there exists a constant $r_0>1$ such that 
\begin{equation}\label{h-upper-bd2}
0<h(r)<1/500\quad\forall r>r_0.
\end{equation} 

\begin{lem}\label{rh-limit-lem0}
Let $\mu_1<0$ and $2\le n\le 4$. Suppose $h\in C^2((0,\infty))\cap C^1([0,\infty))$ is  a solution  of \eqref{h-steady-soliton-eqn}. Then \eqref{r-h-infty-limit} holds.
\end{lem}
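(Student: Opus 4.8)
The goal is to show $b_1 := \lim_{r\to\infty} rh(r) = \lim_{r\to\infty} u(r)$ exists in $[0,\infty)$, where $u(r) = rh(r)$. The natural strategy is to use the monotonicity dichotomy established in Lemma \ref{ur-positive-negative-lem}. First I would dispose of the easy case: when \eqref{ur<0} holds, i.e. there is $s_0 > 0$ with $u_r > 0$ on $(0,s_0)$, $u_r(s_0) = 0$, $u_r < 0$ on $(s_0,\infty)$, the function $u$ is positive (since $h > 0$) and eventually strictly decreasing, hence $u(r)$ converges to a limit $b_1 \in [0,u(s_0)) \subset [0,\infty)$. This case only occurs for $2 \le n < 4$, and in particular the limit need not be positive, consistent with the statement. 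So the main work is the case where \eqref{ur>0} holds, i.e. $u_r(r) > 0$ for all $r > 0$; this is the only case that occurs when $n = 4$, and it is precisely there that one must prove $b_1 > 0$.

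In the case \eqref{ur>0}, $u$ is increasing, so $\lim_{r\to\infty} u(r) \in (0,\infty]$ exists; the content is to rule out $u(r) \to \infty$ (so that $b_1 < \infty$) and, when $n \ge 4$, to get $b_1 > 0$ as well (though $b_1 > 0$ for $n=4$ is automatic since $u$ is increasing and $u(s) > 0$ for any fixed $s$). The plan is to analyze \eqref{u-eqn}, rewritten as
\[
u_{rr}(r) = \frac{-u_r(r)\big(n-1-2h(r)-u_r(r)\big) + (n-4)h(r)^2}{2rh(r)}.
\]
By Lemma \ref{ur-positive-negative-lem} we know $u_r \to 0$ as $r \to \infty$, and by \eqref{h-infty=0} we know $h \to 0$; so for $r$ large the bracket $n-1-2h(r)-u_r(r)$ is close to $n-1 > 0$ and in particular bounded below by, say, $(n-1)/2$ once $r > r_0$ (using \eqref{h-upper-bd2} and the smallness of $u_r$). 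Hence for large $r$,
\[
u_{rr}(r) \le \frac{-\tfrac{n-1}{2}\,u_r(r) + (n-4)h(r)^2}{2rh(r)} \le \frac{(n-4)h(r)^2}{2rh(r)} = \frac{(n-4)h(r)}{2r}.
\]
When $n \le 4$ this gives $u_{rr}(r) \le 0$ for all large $r$, so $u_r$ is eventually nonincreasing; combined with $u_r > 0$ and $u_r \to 0$ this is consistent, but more importantly I want an integrable upper bound on $u_r$. From the same inequality, $u_{rr} \le -\tfrac{n-1}{4r}\,u_r + \tfrac{(n-4)h}{2r}$ for large $r$; treating this as a linear differential inequality for $u_r$ (integrating factor $r^{(n-1)/4}$) and using $h(r) \le C/r^{c}$ for some $c > 0$ — which follows from the decay of $h$, e.g. from a bound like $q(r) \le -C$ that one can extract since $u_r = p + h$ with $u_r \to 0$ forces $p = rh_r \to$ a limit, giving $q = p/h$; actually more directly, Theorem \ref{steady-soln-asymptotic-behaviour-thm} plus the structure of \eqref{qr-eqn10} yields a polynomial decay rate for $h$ — one obtains $u_r(r) = O(r^{-1-\epsilon})$ for some $\epsilon > 0$, whence $\int^\infty u_r < \infty$ and $u(r) = u(1) + \int_1^r u_r$ converges to a finite limit. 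Finiteness of $b_1$ follows. For the positivity claim $b_1 > 0$ when $n \ge 4$: for $n = 4$ it is immediate from monotonicity of $u$; for $n > 4$ the lemma as stated restricts to $2 \le n \le 4$, so only $n=4$ matters here and monotonicity suffices, but to be safe one can note $u(r) \ge u(r_0) > 0$ for all $r \ge r_0$ whenever \eqref{ur>0} holds, and for $n \ge 4$ only \eqref{ur>0} can occur.

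The main obstacle I anticipate is making rigorous the decay rate for $h$ (equivalently for $u_r = p + h$) that is needed to conclude $\int^\infty u_r < \infty$: the bound $u_{rr} \le (n-4)h/(2r) \le 0$ alone gives that $u_r$ decreases to $0$ but not fast enough to be integrable without further input. One must feed back the differential inequality $u_{rr} \lesssim -u_r/r + h/r$ together with a quantitative decay $h(r) \le C r^{-\epsilon}$ — the latter coming from showing $q(r)$ stays bounded away from $0$ (cf. the argument in the proof of Theorem \ref{steady-soln-asymptotic-behaviour-thm}, where \eqref{q-uniform-upper-bd10} was established) or from a bootstrap on \eqref{p-eqn}. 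Handling the borderline contribution of the $(n-4)h^2$ term when $n$ is exactly $4$ (it vanishes) versus $2 \le n < 4$ (it is negative, hence helpful) should be routine once the decay rate is in hand. The formula \eqref{h-2nd-order-limit} in the next theorem suggests the precise asymptotics $u(r) \to b_1$ with $u_r(r) \sim \frac{n-4}{n-1}\,b_1/r \cdot h(r)$-type corrections, which confirms that the decay of $u_r$ is genuinely tied to that of $h$ and that the above scheme is the right one.
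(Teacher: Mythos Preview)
Your case split via Lemma \ref{ur-positive-negative-lem} is exactly right, and Case 2 (eventual monotone decrease of $u$) is fine. The trouble is in Case 1. When you pass from \eqref{u-eqn} to the linear differential inequality, you dropped a factor of $h$ in the denominator: from
\[
u_{rr}=\frac{-u_r(n-1-2h-u_r)+(n-4)h^2}{2rh}
\]
and $n-1-2h-u_r\ge (n-1)/2$ one gets
\[
u_{rr}\le -\frac{(n-1)}{4rh}\,u_r+\frac{(n-4)h}{2r}=-\frac{(n-1)}{4u}\,u_r+\frac{(n-4)h}{2r},
\]
not $u_{rr}\le -\frac{n-1}{4r}u_r+\cdots$. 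Because $h\to 0$ this is a much \emph{stronger} damping term than you wrote. The correct integrating factor is $G(r)=\exp\bigl(\frac{n-1}{4}\int_{r_0}^r\frac{d\rho}{\rho h(\rho)}\bigr)$, not $r^{(n-1)/4}$. Since $h<1/500$ for $r>r_0$ this $G$ grows at least like $(r/r_0)^{125(n-1)/2}$, so for $n\le 4$ one obtains $u_r(r)\le u_r(r_0)/G(r)$ directly (the $(n-4)h$ term has the good sign), which is integrable; hence $u$ is bounded. No polynomial decay of $h$ is needed. Your attempt to supply such decay from ``$q\le -C$'' is in any case circular here: the proof of Theorem \ref{steady-soln-asymptotic-behaviour-thm} established that bound only under the (false) hypothesis $c_0>0$, Lemma \ref{q-bded-lem} gives only $-1<q<0$, and the true limit $q\to -1$ (Theorem \ref{h-steady-soln-asymptotic-behaviour2-thm}) is proved \emph{after} the present lemma using it as input.

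The paper's own argument in Case 1 is different and slightly slicker: rather than an integrating factor it separates variables. Dropping the $(n-4)h^2\le 0$ term and using $2rh=2u$, it writes
\[
\frac{u_{rr}}{\,n-\tfrac{101}{100}-u_r\,}\le -\frac{u_r}{2u},
\]
which integrates to $u(r)\le u(r_0)\bigl(\tfrac{n-101/100-u_r(r)}{\,n-101/100-u_r(r_0)\,}\bigr)^2$, a uniform bound in one line. Both routes work once the $h$ in the denominator is kept; the paper's version avoids introducing $G$ and uses the nonlinear structure of the inequality directly.
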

\begin{proof}
Let $u$ be given by \eqref{u-defn}. 
By Lemma \ref{ur-positive-negative-lem} we can divide the proof into two cases.

\noindent $\underline{\text{\bf Case 1}}$:  \eqref{ur>0} holds.

\noindent By \eqref{p-u-neg}, \eqref{u-eqn}, \eqref{ur>0} and \eqref{h-upper-bd2},
\begin{align}\label{u-upper-bd2}
&u_{rr}(r)\le -\frac{u_r(r)(n-(101/100)-u_r(r))}{2u(r)}\qquad\quad\forall r>r_0\notag\\
\Rightarrow\quad&\frac{u_{rr}}{n-(101/100)-u_r(r)}\le-\frac{u_r(r)}{2u(r)}\qquad\qquad\qquad\,\,\forall r>r_0\notag\\
\Rightarrow\quad&\log\left(\frac{n-(101/100)-u_r(r_0)}{n-(101/100)-u_r(r)}\right)\le-\frac{1}{2}\log\left(\frac{u(r)}{u(r_0)}\right)\quad\forall r>r_0\notag\\
\Rightarrow\quad&u(r)\le u(r_0)\left(\frac{n-(101/100)-u_r(r)}{n-(101/100)-u_r(r_0)}\right)^2\le u(r_0)\left(\frac{n-(101/100)}{n-(203/200)}\right)^2\quad\forall r>r_0.
\end{align}
By \eqref{ur>0} and \eqref{u-upper-bd2} $u(r)$ is monotone increasing and uniformly bounded 
in  $r>r_0$. Hence $0<b_1=\lim_{r\to\infty}u(r)\in [0,\infty)$ exists and  \eqref{r-h-infty-limit} follows.

\noindent $\underline{\text{\bf Case 2}}$:  There exists a  constant $s_0>0$ such that \eqref{ur<0} holds.

\noindent By \eqref{ur<0}  $u(r)>0$ is monotone decreasing in $r>s_0$. Hence  \eqref{r-h-infty-limit}  follows.

\end{proof}

\begin{lem}\label{ur-bded-lem}
Let $\mu_1<0$ and $2\le n<4$. Suppose $h\in C^2((0,\infty))\cap C^1([0,\infty))$ is  a solution  of \eqref{h-steady-soliton-eqn}. Let $u$ be given by \eqref{u-defn}. Suppose there exists a  constant $s_0>0$ such that \eqref{ur<0} holds. Then \eqref{ur-goes-to-0} holds.
\end{lem}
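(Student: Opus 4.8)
The plan is to argue by contradiction, combining a soft convergence fact with a sign analysis of $u_{rr}$ coming from \eqref{u-eqn}. First I would observe that under the hypothesis \eqref{ur<0} the function $u=rh$ is positive (as $h>0$) and strictly decreasing on $(s_0,\infty)$ — this is exactly the situation of Case~2 in the proof of Lemma~\ref{rh-limit-lem0} — so $b_1:=\lim_{r\to\infty}u(r)\in[0,\infty)$ exists. Hence $\int_{s_0}^{\infty}\bigl(-u_r(r)\bigr)\,dr=u(s_0)-b_1<\infty$, and since $-u_r$ is continuous and positive on $(s_0,\infty)$ this forces $\liminf_{r\to\infty}\bigl(-u_r(r)\bigr)=0$, i.e. $\limsup_{r\to\infty}u_r(r)=0$. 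Thus \eqref{ur-goes-to-0} reduces to proving $\liminf_{r\to\infty}u_r(r)\ge 0$.

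Suppose this fails, and set $\ve:=-\tfrac12\liminf_{r\to\infty}u_r(r)>0$. Then there are arbitrarily large $r$ with $u_r(r)<-\ve$, while by the previous paragraph there are also arbitrarily large $r$ with $u_r(r)>-\ve/2$. The crucial step is the sign estimate: there exists $R>s_0$ such that
\[
u_r(r)\le-\ve\quad\Longrightarrow\quad u_{rr}(r)>0\qquad\text{for all }r>R.
\]
To prove it I would set $v=-u_r\ge\ve$, so that the numerator in \eqref{u-eqn} equals $v\bigl(n-1-2h(r)+v\bigr)+(n-4)h(r)^2\ge\ve\bigl(n-1-2h(r)\bigr)-(4-n)h(r)^2$; since $h(r)\to 0$ by Theorem~\ref{steady-soln-asymptotic-behaviour-thm} and $2\le n<4$, for $r$ past some $R=R(\ve,n)$ one has $n-1-2h(r)\ge(n-1)/2$ and $(4-n)h(r)^2\le\ve(n-1)/4$, so the numerator is $\ge\ve(n-1)/4>0$ while the denominator $2rh(r)$ is positive.

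Then I would run a crossing argument: pick $a_1>R$ with $u_r(a_1)<-\ve$, then $b>a_1$ with $u_r(b)>-\ve$, then $a_2>b$ with $u_r(a_2)<-\ve$, and set $a:=\sup\{r\in[b,a_2]:u_r(r)=-\ve\}$. By the intermediate value theorem this set is non-empty and closed, so $u_r(a)=-\ve$ and $u_r(r)<-\ve$ on $(a,a_2]$, whence $u_{rr}(a)\le 0$. But $a>R$ and $u_r(a)\le-\ve$, so the sign estimate gives $u_{rr}(a)>0$, a contradiction. Therefore $\liminf_{r\to\infty}u_r(r)\ge 0$, and together with $\limsup_{r\to\infty}u_r(r)=0$ this yields $\lim_{r\to\infty}u_r(r)=0$, which is \eqref{ur-goes-to-0}.

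The main obstacle I anticipate is making the sign estimate for $u_{rr}$ genuinely uniform — choosing a single $R$ beyond which the positive term $v(n-1-2h+v)$ dominates the (negative, since $n<4$) term $(n-4)h^2$ for all admissible $v\ge\ve$ simultaneously — and being careful that the downward-crossing point $a$ produced in the last step really satisfies the hypothesis $u_r(a)\le-\ve$ of the estimate (it does, with equality). The remaining bookkeeping is routine.
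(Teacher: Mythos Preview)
Your proof is correct and follows essentially the same approach as the paper: both establish that $u_r$ comes arbitrarily close to $0$ along a subsequence (you via the integrability of $-u_r$ on $(s_0,\infty)$, the paper via the mean value theorem on $[s_3,2s_3]$, in each case using that $\lim_{r\to\infty}u(r)$ exists from Lemma~\ref{rh-limit-lem0}), and then combine a sign estimate for $u_{rr}$ from \eqref{u-eqn} with a downward-crossing argument to rule out $u_r(r)\le-\ve$ for large $r$. Your version is in fact slightly cleaner, since you fix $\ve$ explicitly in terms of $\liminf_{r\to\infty}u_r(r)$, whereas the paper introduces an arbitrary $\delta\in(0,1)$ and then tacitly assumes one can find $s_5>s_4$ with $u_r(s_5)<-\delta$.
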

\begin{proof}
Let $2\le n<4$. Suppose  \eqref{ur-goes-to-0} does not hold. Then there exists a constant $0<\delta<1$ and a sequence 
$\{\overline{s}_i\}_{i=1}^{\infty}$, $\overline{s}_i\to\infty$ as $i\to\infty$, such that
\begin{equation}\label{ur-away-0}
u_r(\overline{s}_i)<-\delta\quad\forall i\in\Z^+.
\end{equation}
 By \eqref{h-infty=0} there exists a constant $s_2>s_0$ such that
\begin{equation}\label{h-small-eqn}
0<h(r)\le\delta/300\quad\forall r\ge s_2.
\end{equation}
By Lemma \ref{rh-limit-lem0} \eqref{r-h-infty-limit} holds. Then by \eqref{r-h-infty-limit} and \eqref{ur<0}, there exists a constant $s_3>\max (s_2,1/\delta)$ such that
\begin{equation}\label{u-difference-small}
0<u(s_3)-u(2s_3)<1.
\end{equation}
Hence by \eqref{u-difference-small} and the mean value theorem there exists a constant $s_4\in (s_3,2s_3)$ such that
\begin{equation}\label{ur-approx-0}
u_r(s_4)=\frac{u(2s_3)-u(s_3)}{s_3}>-\frac{1}{s_3}\ge -\delta.
\end{equation}
Then by \eqref{ur-away-0} there exists $i_0\in\Z^+$ such that $s_5:=\overline{s}_{i_0}>s_4$ and
\begin{equation}
u_r(s_5)<-\delta.
\end{equation}
Let
\begin{equation*}
s_6=\inf\{s'>0:u_r(r)<-\delta\quad\forall s'<r\le s_5\}.
\end{equation*}
Then by \eqref{ur-approx-0} $s_6\in (s_4,s_5)$ and
\begin{align}
&u_r(r)<-\delta\quad\forall s_6<r\le s_5 \quad\mbox{ and }\quad u_r(s_6)=-\delta \label{ur-small2}\\
\Rightarrow\quad&u_{rr}(s_6)\le 0.\label{ur-neg-at-zero}
\end{align}
By \eqref{h-small-eqn} and \eqref{ur-small2},
\begin{equation}\label{ur-squqre-lower-bd}
u_r(r)^2\ge\delta^2\ge 100h(r)^2>2(4-n)h(r)^2\quad\forall s_6\le r\le s_5, 2\le n<4.
\end{equation}
Then by \eqref{u-eqn}, \eqref{ur<0}, \eqref{h-small-eqn} and \eqref{ur-squqre-lower-bd},
\begin{align*}\label{u-ineqn6}
u_{rr}(s_6)\ge&\frac{-u_r(s_6)(n-(101/100)-u_r(s_6))-(4-n)h(s_6)^2}{2u(s_6)}\notag\\
\ge&\frac{u_r(s_6)^2-(4-n)h(s_6)^2}{2u(s_6)}\notag\\
\ge&\frac{u_r(s_6)^2}{4u(s_6)}\\
=&\frac{\delta^2}{4u(s_6)}>0\quad\forall  2\le n<4
\end{align*}
which contradicts \eqref{ur-neg-at-zero}. Hence \eqref{ur-goes-to-0} holds and the lemma follows.

\end{proof}

\begin{lem}\label{q-bded-lem2}
Let $\mu_1<0$ and $2\le n<4$. Suppose $h\in C^2((0,\infty))\cap C^1([0,\infty))$ is  a solution  of \eqref{h-steady-soliton-eqn}. Let $q$ be given by \eqref{q-defn}. Suppose there exists a  constant $s_0>0$ such that \eqref{ur<0} holds. Then there exists a constant $C>0$ such that
\begin{equation}\label{q-bded-eqn2}
-C<q(r)<0\quad\forall r>0.
\end{equation}
\end{lem}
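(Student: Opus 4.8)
\emph{Strategy.} We are in the case \eqref{ur<0} with $2\le n<4$, so Lemma \ref{ur-bded-lem} gives $\lim_{r\to\infty}u_r(r)=0$ and Theorem \ref{steady-soln-asymptotic-behaviour-thm} gives $\lim_{r\to\infty}h(r)=0$; these two a priori facts will do all the work. Since $u_r(s_0)=h(s_0)+s_0h_r(s_0)=0$ we have $q(s_0)=-1$, while $u_r(r)<0$ for $r>s_0$ forces $q(r)<-1$ there and $u_r(r)>0$ on $(0,s_0)$ forces $q(r)>-1$ there; so $q\ge-1$ automatically on $(0,s_0]$ and it remains to bound $q$ from below on $(s_0,\infty)$. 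The plan is to pass to $s:=q+1$, which satisfies $s(s_0)=0$ and $s<0$ on $(s_0,\infty)$, and for which a direct computation (substitute $q=s-1$ in \eqref{H-b-r-defn}, so that $H(q(r),r)=(4-n)+s(r)^2+\frac{n-1-4h(r)}{h(r)}s(r)$) turns \eqref{qr-eqn10} into
\[
s_r(r)=-\frac{1}{2r}\,P\bigl(s(r),r\bigr),\qquad P(\xi,r):=\xi^{2}+\frac{n-1-4h(r)}{h(r)}\,\xi+(4-n).
\]

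The core of the argument will be a barrier/first-crossing estimate for $s$. For $r$ with $h(r)$ small, $P(\cdot,r)$ is an upward parabola with two real roots $s_-(r)<s_+(r)$, and $P(\xi,r)<0$ exactly on $(s_-(r),s_+(r))$; evaluating $P$ at $\xi=-1$ and at $\xi=-\tfrac{n-1}{2h(r)}$ and using $h(r)\to0$ shows $s_+(r)>-1$ and $s_-(r)<-\tfrac{n-1}{2h(r)}$. The only way $s$ could run off to $-\infty$ is along the outer branch $s<s_-(r)$, where $P>0$ and hence $s_r<0$; but that branch is inaccessible for large $r$, because $s(r)h(r)=u_r(r)\to0$, so $|s(r)|<\tfrac{n-1}{2h(r)}$ eventually, i.e.\ $s(r)>-\tfrac{n-1}{2h(r)}>s_-(r)$. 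Fixing $R_1>s_0$ past which $h$ is small enough for the parabola facts and $|u_r|<\tfrac{n-1}{2}$, we therefore get, for all $r\ge R_1$, the implication: if $s(r)\le-1$ then $s(r)\in(s_-(r),s_+(r))$, hence $P(s(r),r)<0$, hence $s_r(r)>0$.

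Given this implication the lemma follows quickly. On the compact interval $[0,R_1]$ the function $q$ is continuous, hence $q\ge-M$ for some $M>0$; put $K:=\max\bigl(2,-s(R_1)\bigr)$. If $s(r_2)<-K$ for some $r_2>R_1$, then $s<-K$ cannot hold on all of $[R_1,r_2]$ (that would give $s(R_1)<-K$, against the choice of $K$), so $\rho_0:=\sup\{\rho\in[R_1,r_2]:s(\rho)\ge-K\}$ satisfies $\rho_0<r_2$, $s(\rho_0)=-K$, and $s<-K\le-1$ on $(\rho_0,r_2]$; but then $s_r>0$ on $[\rho_0,r_2]$ by the implication above, so $s(r_2)>s(\rho_0)=-K$, a contradiction. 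Hence $s\ge-K$ on $[R_1,\infty)$, so $q=s-1\ge-\max(M,K+1)=:-C$ on $[0,\infty)$; combined with \eqref{q-neg} this gives \eqref{q-bded-eqn2}. I expect the only genuinely delicate step to be the one flagged above — excluding the runaway branch $s<s_-(r)$ — and it is exactly there that the decay $u_r(r)\to0$ supplied by Lemma \ref{ur-bded-lem} is indispensable.
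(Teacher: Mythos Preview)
Your proof is correct. It and the paper's proof rest on the same two inputs (Theorem \ref{steady-soln-asymptotic-behaviour-thm} for $h\to0$ and Lemma \ref{ur-bded-lem} for $u_r\to0$) and both are first-crossing/barrier arguments, but the organization is genuinely different.

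The paper works directly with $q$: using $|rh_r|\le 11/100$ and $h<1/500$ it derives a \emph{linear} differential inequality $rq_r\ge-\dfrac{C_1(q+C_2)}{2h}$ with $C_1>0$ and $1<C_2<2$, then runs a two-step crossing argument at the fixed level $q=-32$ (first a Gr\"onwall step to rule out $q<-32$ on a half-line, then a forward-and-backward crossing to rule out repeated dips).

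You instead pass to $s=q+1=u_r/h$, keep the \emph{exact} quadratic ODE $s_r=-P(s,r)/(2r)$, and do a phase-line analysis of $P(\cdot,r)$. The crucial idea you isolate --- that $|s(r)|=|u_r(r)|/h(r)<\tfrac{n-1}{2h(r)}$ eventually, hence $s(r)$ can never sit on the runaway branch $s<s_-(r)$ --- is exactly the content of the paper's bound $|rh_r|<11/100$ read through your change of variable, but your formulation makes the mechanism (the inaccessible root of the quadratic) transparent and allows a single crossing argument at the level $-K$ instead of the paper's two-stage argument at $-32$. What the paper's approach buys in exchange is an explicit asymptotic pull-back toward $-C_2\in(-2,-1)$, which you do not extract (nor need) here.
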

\begin{proof}
Let $2\le n<4$ and $r_0>1$ be as in \eqref{h-upper-bd2}. By Lemma \ref{ur-bded-lem} there exists a constant $s_1>\max(s_0, r_0)$ such that 
\begin{equation}\label{ur-lower-bd}
-1/10<u_r(r)<0\quad\forall r>s_1.
\end{equation} 
Suppose  \eqref{q-bded-eqn2} does not hold for any $C>0$. Then there exists a constant $s_2>s_1$ such that $q(s_2)<-32$. Let
\begin{equation*}
s_3=\sup\{s'>s_2:q(r)<-32\quad\forall s_2\le r<s'\}.
\end{equation*}
Then
\begin{equation}\label{q<-32codition}
q(r)<-32\quad\forall s_2\le r<s_3.
\end{equation}
By \eqref{p-u-neg}, \eqref{h-upper-bd2} and \eqref{ur-lower-bd},
\begin{equation}\label{rhr-lower-bd2}
0>rh_r(r)>-h(r)-\frac{1}{10}\ge -\frac{11}{100}\quad\forall r>s_1.
\end{equation}
By \eqref{qr-eqn10}, \eqref{h-upper-bd2} and \eqref{rhr-lower-bd2},
\begin{align}\label{qr>ineqn1}
rq_r(r)=&-q(r)\left(\frac{n-1+rh_r(r)}{2h(r)}-1\right)-(n-1)\frac{1-h(r)}{2h(r)}\notag\\
\ge&-q(r)\left(\frac{n-(111/100)}{2h(r)}-\frac{1}{100h(r)}\right)-\frac{n-1}{2h(r)}\notag\\
\ge&-\frac{C_1(q(r)+C_2)}{2h(r)}\quad\forall s_2\le r<s_3, 2\le n<4.
\end{align}
where 
\begin{equation}\label{c1-c2-ineqn}
C_1:=n-\frac{113}{100}>0\quad\mbox{ and }\quad 1<C_2
:=\frac{n-1}{n-\frac{113}{100}}<2\quad\forall 2\le n<4.
\end{equation}
By \eqref{q<-32codition} and \eqref{c1-c2-ineqn},
\begin{equation}\label{q-expression-bd-above}
q(r)+C_2<0\quad\forall s_2\le r<s_3, 2\le n<4.
\end{equation}
Suppose $s_3=\infty$. Then by \eqref{qr>ineqn1} and \eqref{q-expression-bd-above},
\begin{align}\label{qr>ineqn2}
&\frac{q_r(r)}{q(r)+C_2}\le -\frac{C_1}{2rh(r)}\quad\forall r\ge s_2, 2\le n<4\notag\\
\Rightarrow\quad&\log\left(\frac{q(r)+C_2}{q(s_2)+C_2}\right)\le -\int_{s_2}^r\frac{C_1}{2\rho h(\rho)}\,d\rho\quad\forall r\ge s_2, 2\le n<4\notag\\
\Rightarrow\quad&q(r)+C_2\ge (q(s_2)+C_2)e^{-\int_{s_2}^r\frac{C_1}{2\rho h(\rho)}\,d\rho}\quad\forall r\ge s_2, 2\le n<4.
\end{align}
Letting $r\to\infty$ in \eqref{qr>ineqn2}, by \eqref{h-upper-bd2} and \eqref{c1-c2-ineqn},
\begin{equation*}
\liminf_{r\to\infty}q(r)\ge -C_2>-2.
\end{equation*}
which contradicts \eqref{q<-32codition} and the
assumption that $s_3=\infty$. Hence $s_3<\infty$ and $q(s_3)=-32$. By \eqref{qr>ineqn1} and \eqref{c1-c2-ineqn},
\begin{equation}\label{qr>0ineqn}
q_r(s_3)\ge -\frac{C_1(q(s_3)+C_2)}{2s_3h(s_3)}\ge\frac{15C_1}{s_3h(s_3)}>0.
\end{equation}
By \eqref{qr>0ineqn} there exists a constant $s_4>s_3$ such that
\begin{align}\label{qr>0ineqn2}
&q_r(r)>0\qquad\qquad\quad\forall s_3\le r\le s_4\notag\\
\Rightarrow\quad&q(r)>q(s_3)=-32\quad\forall s_3\le r\le s_4.
\end{align}
Since  \eqref{q-bded-eqn2} does not hold for any $C>0$, there exists a constant $s_5>s_4$ such that
$q(s_5)<-32$. Let
\begin{equation*}
s_6=\inf\{s'<s_5:q(r)<-32\quad\forall s'<r\le s_5\}.
\end{equation*}
By \eqref{qr>0ineqn2} $s_6\in (s_4,s_5)$. Then
\begin{align}
&q(r)<-32\quad\forall s_6<r\le s_5\quad\mbox{ and }\quad q(s_6)=-32\label{q<-32codition2}\\
\Rightarrow\quad&q_r(s_6)\le 0\label{qr-neg6}.
\end{align}
By \eqref{q<-32codition2} and an argument similar to the proof of \eqref{qr>0ineqn},
\begin{equation}\label{qr>0ineqn3}
q_r(s_6)\ge -\frac{C_1(q(s_6)+C_2)}{2s_6h(s_6)}\ge\frac{15C_1}{s_6h(s_6)}>0
\end{equation}
which contradicts \eqref{qr-neg6}. Thus no such constant $s_5$ exists and
\begin{align}
&q(r)\ge -32\quad\forall r\ge s_4\notag\\
\Rightarrow\quad&q(r)>-C_3\quad\forall r>0
\end{align}
where $C_3=\min (32,1+\max_{0\le r\le s_5}|q(r)|)$. This contradicts our assumption that \eqref{q-bded-eqn2} does not hold for any $C>0$. Hence no such constant $s_2$ exists. Thus there exists a constant $C>0$ such that \eqref{q-bded-eqn2} holds and the lemma follows.

\end{proof}

We are now ready to prove Theorem \ref{h-steady-soln-asymptotic-behaviour2-thm}.

\noindent{\bf Proof of Theorem \ref{h-steady-soln-asymptotic-behaviour2-thm}}:
Let $r_0>1$ be as in \eqref{h-upper-bd2}. Let 
\begin{equation}\label{F-defn}
F(r)=\mbox{exp}\,\left(\frac{n-1}{2}\int_{r_0}^r\frac{d\rho}{\rho h(\rho)}\right)\quad\forall r\ge r_0.
\end{equation}
Multiplying \eqref{qr-eqn10} by $F(r)/r$ and integrating over $(r_0,r)$, $r>r_0$,
\begin{align}\label{q-integral-eqn}
&\frac{F(r)q(r)}{r}=r_0^{-1}q(r_0)-\frac{1}{2}\int_{r_0}^r\left(q(\rho)^2+\frac{(n-1)(1-h(\rho))}{h(\rho)}\right)\frac{F(\rho)}{\rho^2}\,d\rho\notag\\
\Rightarrow\quad&q(r)=\frac{r_0^{-1}q(r_0)}{r^{-1}F(r)}-\frac{1}{2r^{-1}F(r)}\int_{r_0}^r\left(q(\rho)^2+\frac{(n-1)(1-h(\rho))}{h(\rho)}\right)\frac{F(\rho)}{\rho^2}\,d\rho\quad\forall r>r_0.
\end{align}
Let $\{r_i\}_{i=1}^{\infty}\subset (r_0,\infty)$ be a sequence such that $r_i\to\infty$ as $i\to\infty$. By Lemma \ref{ur-positive-negative-lem}, Lemma \ref{q-bded-lem} and Lemma \ref{q-bded-lem2} there exists a constant $C>0$ such that \eqref{q-bded-eqn2} holds. Hence the sequence $\{r_i\}_{i=1}^{\infty}$ has a subsequence which we may assume without loss of generality to be the sequence itself that converges to some constant $q_1\in [-C,0]$ as $i\to\infty$.
By \eqref{h-upper-bd2},
\begin{align}
&F(r)\ge\mbox{exp}\,\left(\int_{r_0}^r\frac{d\rho}{2\rho h(\rho)}\right)
\ge  \mbox{exp}\,\left(50\int_{r_0}^r\frac{ d\rho}{\rho}\right)
=r_0^{-50}r^{50}\quad\forall r>r_0\notag\\
\Rightarrow\quad&r^{-2}F(r)\ge r_0^{-50}r^{48}\to\infty\quad\mbox{ as }r\to\infty\label{r-1-F-limit-infty}\\
\Rightarrow\quad&\int_{r_0}^r\left(q(\rho)^2+\frac{(n-1)(1-h(\rho))}{h(\rho)}\right)\frac{F(\rho)}{\rho^2}\,d\rho
\ge\int_{r_0}^r\rho^{-2}F(\rho)\,d\rho\to\infty\quad\mbox{ as }r\to\infty.\label{expression1-infty}
\end{align}
By \eqref{h-infty=0}, \eqref{q-integral-eqn}, \eqref{r-1-F-limit-infty}, \eqref{expression1-infty} and the l'Hospital rule,
\begin{align*}
q_1=\lim_{i\to\infty}q(r_i)=&-\lim_{i\to\infty}\frac{1}{2r_i^{-1}F(r_i)}\int_{r_0}^{r_i}\left(q(\rho)^2+\frac{(n-1)(1-h(\rho))}{h(\rho)}\right)\frac{F(\rho)}{\rho^2}\,d\rho\notag\\
=&-\frac{1}{2}\lim_{i\to\infty}\frac{\left(q(r_i)^2+\frac{(n-1)(1-h(r_i))}{h(r_i)}\right)r_i^{-2}F(r_i)}{-r_i^{-2}F(r_i)+r_i^{-1}F(r_i)(n-1)(2r_ih(r_i))^{-1}}\notag\\
=&\lim_{i\to\infty}\frac{q(r_i)^2h(r_i)+(n-1)(1-h(r_i))}{2h(r_i)-(n-1)}\notag\\
=&-1.
\end{align*}
Since the sequence $\{r_i\}_{i=1}^{\infty}$ is arbitrary, \eqref{q-limit=-1} holds.

{\hfill$\square$\vspace{6pt}}

\begin{lem}\label{w1-limit-lem}
Let $\mu_1<0$ and $n\ge 2$. Suppose $h\in C^2((0,\infty))\cap C^1([0,\infty))$ is  a solution  of \eqref{h-steady-soliton-eqn}. Let 
\begin{equation}\label{w1-defn}
w(r)=\frac{u_r(r)}{h(r)^2}\quad\forall r>0.
\end{equation}
Then 
\begin{equation}\label{w1-limit}
\lim_{r\to\infty}w(r)=\frac{n-4}{n-1}. 
\end{equation}
\end{lem}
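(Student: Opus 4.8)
The plan is to turn the second order equation \eqref{u-eqn} for $u=rh$ into a first order \emph{linear} ODE for $w$, to show that $w$ is bounded on $[0,\infty)$, and then to read off the limit from the associated integral representation by l'Hospital's rule. For the first step, write $q$ for the quantity in \eqref{q-defn} and note the exact identity $w(r)h(r)=q(r)+1$, which follows from $u_r=h+rh_r=(1+q)h$. Differentiating $w=u_r/h^2$, inserting \eqref{u-eqn} and $rh_r=qh$, and using $w^2h=w(q+1)$ to remove the term quadratic in $w$, a direct computation gives
\[
w_r(r)=-\frac{n-1}{2rh(r)}\left(w(r)-\frac{n-4}{n-1}\right)+\frac{3(1-q(r))}{2r}\,w(r)\qquad\forall\,r>0.
\]
Since $h(r)\to0$ as $r\to\infty$ by Theorem \ref{steady-soln-asymptotic-behaviour-thm}, the coefficient $-(n-1)/(2rh)$ of the zeroth order term eventually dominates every other coefficient, and this is precisely the mechanism that should drive $w$ to $(n-4)/(n-1)$.

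Next I would prove that $w$ is bounded. It is continuous on $[0,\infty)$ with $w(0)=u_r(0)/h(0)^2=1$, so only its behaviour for large $r$ matters. By Lemma \ref{ur-positive-negative-lem}, either \eqref{ur>0} holds, or \eqref{ur<0} holds with $2\le n<4$; in the first case Lemma \ref{q-bded-lem} and in the second Lemma \ref{q-bded-lem2} give a constant $C_1>0$ with $|1-q(r)|\le C_1$ for all $r>0$. By Theorem \ref{steady-soln-asymptotic-behaviour-thm} we may enlarge the $r_0$ of \eqref{h-upper-bd2} so that also $h(r)\le(n-1)/(6C_1)$ for $r\ge r_0$; then on $[r_0,\infty)$ the coefficient of $w$ in the displayed ODE is $\le-(n-1)/(4rh(r))<0$. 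A standard trapping argument now applies: whenever $w(r)>2|n-4|/(n-1)$ (resp.\ $w(r)<-2|n-4|/(n-1)$), the ODE gives $w_r(r)<0$ (resp.\ $w_r(r)>0$), so $w$ cannot leave the interval $[-T,T]$ with $T:=\max\bigl(2|n-4|/(n-1),|w(r_0)|\bigr)$ once $r\ge r_0$. Hence $M_0:=\sup_{r>0}|w(r)|<\infty$.

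Finally, with $F$ as in \eqref{F-defn} we have $F'(r)=F(r)(n-1)/(2rh(r))$, and the ODE rearranges to $\bigl(F(w-\tfrac{n-4}{n-1})\bigr)_r=\tfrac{3(1-q)}{2r}Fw$. Integrating over $(r_0,r)$ and dividing by $F(r)$,
\[
w(r)-\frac{n-4}{n-1}=\frac{F(r_0)\bigl(w(r_0)-\tfrac{n-4}{n-1}\bigr)}{F(r)}+\frac{1}{F(r)}\int_{r_0}^{r}\frac{3\bigl(1-q(\rho)\bigr)w(\rho)}{2\rho}\,F(\rho)\,d\rho.
\]
Since $F(r)\to\infty$ (cf.\ \eqref{r-1-F-limit-infty}), the first term tends to $0$; and since $|(1-q)w|\le C_1M_0$ by the previous step, the second is bounded in modulus by $\tfrac32 C_1M_0\,F(r)^{-1}\int_{r_0}^{r}\rho^{-1}F(\rho)\,d\rho$, which tends to $0$ by l'Hospital's rule because the quotient of derivatives equals $r^{-1}F(r)/F'(r)=2h(r)/(n-1)\to0$. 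Therefore $w(r)\to(n-4)/(n-1)$, which is \eqref{w1-limit}.

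The genuine work is the boundedness of $w$; once the ODE is written in its linear form, the identity $w^2h=w(q+1)$ makes both the trapping estimate and the l'Hospital step routine. The one point that needs care in the trapping is that the inhomogeneity $\tfrac{n-4}{2rh}$ blows up like $1/h$, so one must exploit the full coefficient $-(n-1)/(4rh)$ of $w$ rather than merely an $O(1/r)$ bound.
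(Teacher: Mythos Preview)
Your argument is correct. Both you and the paper derive a first order linear ODE for $w$, multiply by an integrating factor, and finish with l'Hospital; the difference is in how the lower–order terms are handled. The paper keeps the full coefficient of $w$ in the ODE,
\[
w_r+\frac{n-1-2h+4qh-u_r}{2rh}\,w=\frac{n-4}{2rh},
\]
uses the exact integrating factor $F_1(r)=\exp\!\bigl(\int_{r_0}^r\frac{n-1-2h+4qh-u_r}{2\rho h}\,d\rho\bigr)$, and reads off the limit directly from l'Hospital applied to $\bigl(\int F_1/(\rho h)\bigr)/F_1$; for this it invokes Theorem~\ref{h-steady-soln-asymptotic-behaviour2-thm} ($q\to-1$) and Lemmas~\ref{ur-positive-negative-lem}, \ref{ur-bded-lem} ($u_r\to0$) to control $F_1$ and evaluate the l'Hospital quotient. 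You instead isolate the dominant piece $-(n-1)/(2rh)$, push the bounded remainder $3(1-q)w/(2r)$ to the right, and use the simpler $F$ from \eqref{F-defn}; the price is that you must first show $w$ is bounded, which your trapping argument does using only the boundedness of $q$ from Lemmas~\ref{q-bded-lem} and \ref{q-bded-lem2}. In particular your route does not appeal to Theorem~\ref{h-steady-soln-asymptotic-behaviour2-thm} or to $u_r\to0$, so it is logically a bit more self-contained, at the cost of the extra trapping step. Both approaches are clean; the paper's is shorter once Theorem~\ref{h-steady-soln-asymptotic-behaviour2-thm} is in hand, while yours shows that the lemma really only needs $h\to0$ and $q$ bounded.
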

\begin{proof}
Let $q$ and $u$ be given by \eqref{q-defn} and \eqref{u-defn} respectively. By \eqref{h-infty=0}, Theorem \ref{h-steady-soln-asymptotic-behaviour2-thm}, Lemma \ref{ur-positive-negative-lem} and Lemma \ref{ur-bded-lem} there exists 
a constant $r_0>1$ such that \eqref{h-upper-bd2}
and
\begin{equation}\label{q-upper-lower-bd-near-infty}
-3/2<q(r)<-1/2\quad\forall r\ge r_0
\end{equation}
and
\begin{equation}\label{ur-bded}
|u_r|\le 1/500 \quad\forall r\ge r_0
\end{equation}
hold. By \eqref{u-eqn} and a direct computation,
\begin{equation}\label{w-eqn}
w_r(r)+\frac{n-1-2h(r)+4q(r)h(r)-u_r(r)}{2rh(r)}w(r)=\frac{n-4}{2rh(r)}\quad\forall r>0.
\end{equation}
Let
\begin{equation}\label{F1-defn}
F_1(r)=\mbox{exp}\,\left(\int_{r_0}^r\frac{n-1-2h(\rho)+4q(\rho)h(\rho)-u_r(\rho)}{2\rho h(\rho)}\,d\rho\right)\quad\forall r\ge {r_0}.
\end{equation}
Multiplying \eqref{w-eqn} by $F_1$ and integrating over $(r_0,r)$, $r>r_0$,
\begin{align}\label{w-integral-representation}
&w(r)F_1(r)=w(r_0)+\frac{n-4}{2}\int_{r_0}^r\frac{F_1(\rho)}{\rho h(\rho)}\,d\rho\notag\\
\Rightarrow\quad&w(r)=\frac{w(r_0)}{F_1(r)}+\frac{n-4}{2F_1(r)}\int_{r_0}^r\frac{F_1(\rho)}{\rho h(\rho)}\,d\rho\quad\forall r\ge r_0.
\end{align}
By \eqref{p-u-neg}, \eqref{h-upper-bd2}, \eqref{q-upper-lower-bd-near-infty} and \eqref{ur-bded},
\begin{align}\label{F1-goes-to-infty}
\frac{n-1-2h(\rho)+4q(\rho)h(\rho)-u_r(\rho)}{2h(\rho)}\ge&\frac{1-\frac{2}{500}-\frac{4}{500}\cdot\frac{3}{2}-\frac{1}{500}}{\frac{2}{500}}> 245\notag\\
\Rightarrow\qquad\qquad\qquad\qquad\qquad\qquad\quad F_1(r)\ge&\mbox{exp}\,\left(245\int_{r_0}^r\frac{d\rho}{\rho}\right)\ge (r/r_0)^{245}\,\,\forall r\ge {r_0}\notag\\
\to&\infty\quad\mbox{ as }r\to\infty.
\end{align}
Then by \eqref{h-upper-bd2} and \eqref{F1-goes-to-infty},
\begin{align}\label{F1-integral-infty}
\int_{r_0}^r\frac{F_1(\rho)}{\rho h(\rho)}\,d\rho\ge&\frac{500}{245r_0^{245}}(r^{245}-r_0^{245})\quad\forall r\ge {r_0}\notag\\
\to&\infty\quad\mbox{ as }r\to\infty.
\end{align}
By Lemma \ref{ur-positive-negative-lem} and Lemma \ref{ur-bded-lem} \eqref{ur-goes-to-0} holds.
By \eqref{h-infty=0}, \eqref{ur-goes-to-0}, \eqref{w-integral-representation}, \eqref{F1-goes-to-infty}, \eqref{F1-integral-infty}, the
l'Hosiptal rule and Theorem \ref{h-steady-soln-asymptotic-behaviour2-thm}, we get
\begin{equation*}
\lim_{r\to\infty}w(r)=\lim_{r\to\infty}\frac{w(r_0)}{F_1(r)}=0\quad\mbox{ if }n=4
\end{equation*}
and
\begin{equation*}
\lim_{r\to\infty}w(r)=\frac{n-4}{2}\lim_{r\to\infty}\frac{F_1(r)(r h(r))^{-1}}{F_1(r)(n-1-2h(r)+4q(r)h(r)-u_r(r))(2r h(r))^{-1}}=\frac{n-4}{n-1}\quad\forall n\ge 2, n\ne 4
\end{equation*}
and \eqref{w1-limit} follows.

\end{proof}

\begin{lem}\label{rh-limit-lem}
Let $\mu_1<0$, $n\ge 4$ and $h\in C^2((0,\infty))\cap C^1([0,\infty))$ be  a solution  of \eqref{h-steady-soliton-eqn}. Then \eqref{r-h-infty-limit} holds
with $b_1>0$.
\end{lem}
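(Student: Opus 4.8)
The plan is to reduce everything to showing that $u(r)=rh(r)$ is bounded, and to get that bound from the sharp asymptotics $w(r)=u_r(r)/h(r)^2\to (n-4)/(n-1)$ supplied by Lemma~\ref{w1-limit-lem}. First I would note that, by Lemma~\ref{ur-positive-negative-lem}, for $n\ge 4$ only the alternative \eqref{ur>0} can occur, so $u$ is strictly increasing on $(0,\infty)$. Consequently $b_1:=\lim_{r\to\infty}u(r)$ exists in $(0,\infty]$ and $b_1\ge u(r_1)=r_1h(r_1)>0$ for any fixed $r_1>0$, so the positivity assertion of \eqref{r-h-infty-limit} is automatic; it remains only to rule out $b_1=\infty$, which is the real content of the lemma when $n>4$ (the comparison bound used in Lemma~\ref{rh-limit-lem0} is no longer available there).

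Next I would argue by contradiction: assume $u(r)\to\infty$. The key step is the identity
\[
-\frac{d}{dr}\left(\frac{1}{u(r)}\right)=\frac{u_r(r)}{u(r)^2}=\frac{1}{r^2}\cdot\frac{u_r(r)}{h(r)^2}=\frac{w(r)}{r^2}\qquad\forall r>0,
\]
which is just $u^2=r^2h^2$ together with the definition of $w$. Since $w$ is continuous and, by Lemma~\ref{w1-limit-lem}, converges to $(n-4)/(n-1)$, it is bounded on $[r_0,\infty)$, so $w(s)/s^2$ is integrable there; integrating the identity over $[r,\infty)$ (using $1/u(r)\to 0$, which is exactly where the hypothesis $b_1=\infty$ enters) gives
\[
\frac{1}{u(r)}=\int_r^\infty\frac{w(s)}{s^2}\,ds\qquad\forall r\ge r_0.
\]
Then, choosing for each $\varepsilon>0$ an $R\ge r_0$ with $w(s)<\frac{n-4}{n-1}+\varepsilon$ on $[R,\infty)$, I would estimate $1/u(r)\le\bigl(\frac{n-4}{n-1}+\varepsilon\bigr)/r$ for $r\ge R$, i.e. $h(r)=u(r)/r\ge\bigl(\frac{n-4}{n-1}+\varepsilon\bigr)^{-1}$ for $r\ge R$. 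Taking $\varepsilon=1$ this forces $h(r)\ge\frac{n-1}{2n-5}>0$ for all large $r$, contradicting $h(r)\to 0$ from Theorem~\ref{steady-soln-asymptotic-behaviour-thm}. Hence $b_1<\infty$, so $b_1\in(0,\infty)$ and \eqref{r-h-infty-limit} holds with $b_1>0$.

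The computations (the one-line differentiation and the elementary integral bounds) are routine; the only place where I would be careful is the passage to the improper integral over $[r,\infty)$, i.e. verifying the convergence of $\int_{r_0}^\infty w(s)s^{-2}\,ds$ from the boundedness of $w$ and the vanishing of $1/u$ at infinity. I expect no genuine obstacle, and the argument is uniform in $n\ge 4$: in the borderline case $n=4$, where $(n-4)/(n-1)=0$, the same estimate still yields $h(r)\ge 1/\varepsilon$ on $[R(\varepsilon),\infty)$, which already contradicts $h\to 0$.
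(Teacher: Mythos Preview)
Your argument is correct and follows essentially the same route as the paper: both proofs invoke Lemma~\ref{w1-limit-lem} and exploit the identity $-\frac{d}{dr}\bigl(u(r)^{-1}\bigr)=u_r/u^2=w(r)/r^2$, then integrate and use a bound on $w$ of the form $w<C_3<1$ to control $u$. The only cosmetic difference is that the paper integrates directly over $[s_1,r]$ and uses $h(s_1)<1$ to obtain $1/u(r)>(1-C_3)/s_1$, whereas you argue by contradiction, integrate over $[r,\infty)$, and derive $h(r)\ge C_3^{-1}$ to contradict Theorem~\ref{steady-soln-asymptotic-behaviour-thm}.
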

\begin{proof}
Let $u$ be given by \eqref{u-defn}. By Lemma \ref{w1-limit-lem}
there exists a constant $s_1>1$ such that
\begin{align}\label{u-upper-bd8}
&\frac{u_r(r)}{h(r)^2}<\frac{n-(3/2)}{n-1}=:C_3<1\quad\forall r\ge s_1\notag\\
\Rightarrow\quad&u(r)^{-2}u_r(r)<C_3r^{-2}\quad\forall r\ge s_1\notag\\
\Rightarrow\quad&\frac{1}{u(s_1)}-\frac{1}{u(r)}<\frac{C_3}{s_1}- \frac{C_3}{r}\quad\forall r\ge s_1\notag\\
\Rightarrow\quad&\frac{1}{u(r)}>\frac{1}{s_1h(s_1)}-\frac{C_3}{s_1}+\frac{C_3}{r}
\ge\frac{1-C_3}{s_1}\quad\forall r\ge s_1\notag\\
\Rightarrow\quad&u(r)<\frac{s_1}{1-C_3}\quad\forall r\ge s_1.
\end{align}
By \eqref{ur>0} and \eqref{u-upper-bd8} $u(r)>0$ is monotone increasing and uniformly bounded 
in  $r>s_1$. Hence $0<b_1=\lim_{r\to\infty}u(r)$ exists and  \eqref{r-h-infty-limit} follows.

\end{proof}

\noindent{\bf Proof of Theorem \ref{steady-soln-asymptotic-hr-behaviour-thm}}:

\noindent \eqref{r-h-infty-limit} is proved in Lemma \ref{rh-limit-lem0} and Lemma \ref{rh-limit-lem}. Hence it remains to prove \eqref{h-2nd-order-limit}.  Let 
\begin{equation*}\label{v-defn}
v(r)=r(q(r)+1)\quad\forall r\ge 0.
\end{equation*}
Then by \eqref{qr-eqn10} and a direct computation,
\begin{equation}
v_r(r)+\left(\frac{n-1}{h(r)}-5+q(r)\right)\frac{v(r)}{2r}=\frac{n-4}{2}\quad\forall r>0.\label{v-eqn}
\end{equation}
By \eqref{h-infty=0} and Theorem \ref{h-steady-soln-asymptotic-behaviour2-thm} there exists a constant $r_0>1$ such that \eqref{h-upper-bd2} and \eqref{q-upper-lower-bd-near-infty} holds.
Let 
\begin{equation}\label{H-defn}
H(r)=\mbox{exp}\,\left(\frac{1}{2}\int_{r_0}^r\left(\frac{n-1}{h(\rho)}+q(r)\right)\,\frac{d\rho}{\rho}\right)\quad\forall r\ge r_0.
\end{equation}
Multiplying \eqref{v-eqn} by $r^{-5/2}H(r)$ and integrating over $(r_0,r)$, $r>r_0$,
\begin{align}\label{v-integral-representation2}
&r^{-5/2}H(r)v(r)=r_0^{-5/2}v(r_0)+\frac{n-4}{2}\int_{r_0}^r\rho^{-5/2}H(\rho)\,d\rho\quad\forall r>r_0\notag\\
\Rightarrow\quad&v(r)=\frac{r_0^{-5/2}v(r_0)}{r^{-5/2}H(r)}+\frac{n-4}{2r^{-5/2}H(r)}\int_{r_0}^r\rho^{-5/2}H(\rho)\,d\rho\quad\forall r>r_0.
\end{align}
By \eqref{h-upper-bd2} and \eqref{q-upper-lower-bd-near-infty},
\begin{align}\label{r-1-H-limit-infty}
&H(r)\ge\mbox{exp}\,\left({\frac{1}{2}\int_{r_0}^r\left(100(n-1)-(3/2)\right)\,\frac{d\rho}{\rho}}\right)\ge \mbox{exp}\,\left(49\int_{r_0}^r\frac{ d\rho}{\rho}\right)
=(r/r_0)^{49}\quad\forall r>r_0\notag\\
\Rightarrow\quad&r^{-5/2}H(r)\ge r_0^{-49}r^{93/2}\to\infty\quad\mbox{ as }r\to\infty\\
\Rightarrow\quad&\int_{r_0}^r\rho^{-5/2}H(\rho)\,d\rho\to\infty\quad\mbox{ as }r\to\infty.\label{integral-to-infty2}
\end{align}
Hence for $n=4$ by \eqref{v-integral-representation2} and \eqref{r-1-H-limit-infty},
\begin{equation*}
\lim_{r\to\infty}v(r)=0.
\end{equation*}
For $n\ge 2$ and $n\ne 4$ by \eqref{r-h-infty-limit}, \eqref{v-integral-representation2}, \eqref{r-1-H-limit-infty},  \eqref{integral-to-infty2},  the l'Hospital rule and Theorem \ref{h-steady-soln-asymptotic-behaviour2-thm},
\begin{align*}
\lim_{r\to\infty}v(r)=&\frac{n-4}{2}\lim_{r\to\infty}\frac{1}{r^{-5/2}H(r)}\int_{r_0}^r\rho^{-5/2}H(\rho)\,d\rho\notag\\
=&\frac{n-4}{2}\lim_{r\to\infty}\frac{r^{-5/2}H(r)}{-(7/2)r^{-7/2}H(r)+r^{-5/2}H(r)[(n-1)(2rh(r))^{-1}+q(r)(2r)^{-1}]}\notag\\
=&\left(\frac{n-4}{n-1}\right)\lim_{r\to\infty}rh(r)=\left(\frac{n-4}{n-1}\right)b_1\quad\forall n\ge 2, n\ne 4
\end{align*}
where $b_1$ is given by \eqref{r-h-infty-limit} and \eqref{h-2nd-order-limit}  follows.

{\hfill$\square$\vspace{6pt}}

\noindent $\underline{\text{Alternate proof of Theorem \ref{steady-soln-asymptotic-hr-behaviour-thm}}}$: Let $w$ be given by \eqref{w1-defn}. Then by Theorem \ref{h-steady-soln-asymptotic-behaviour2-thm} and Lemma \ref{w1-limit-lem},
\begin{align*}
&v(r)=w(r)rh(r)\quad\forall r>0\\
\Rightarrow\quad&\lim_{r\to\infty}v(r)=\lim_{r\to\infty}w(r)\cdot\lim_{r\to\infty}rh(r)=\left(\frac{n-4}{n-1}\right)b_1\quad\forall n\ge 2
\end{align*}
and \eqref{h-2nd-order-limit}  follows.
 
{\hfill$\square$\vspace{6pt}}

\begin{rmk}
By  \eqref{h-2nd-order-limit} of Theorem \ref{steady-soln-asymptotic-hr-behaviour-thm},
\begin{align}\label{h-asymptotic-near-infty}
&\frac{h_r(r)}{h(r)}\approx -\frac{1}{r}-\frac{c_1b_1}{r^2}\quad\mbox{ as }r\to\infty\notag\\
\Rightarrow\quad&\log\left(\frac{h(r)}{h(r_0)}\right)\approx\log\left(\frac{r_0}{r}\right)+c_1b_1\left(\frac{1}{r}-\frac{1}{r_0}\right)\quad\forall 0<r_0<r\quad\mbox{ as }r>r_0\to\infty\notag\\
\Rightarrow\quad&h(r)\approx \frac{C}{r}e^{\frac{c_1b_1}{r}}\quad\mbox{ as }r\to\infty\notag\\
\Rightarrow\quad&\frac{1}{\sqrt{h(r^2)}}\approx\frac{r}{\sqrt{C}}e^{-\frac{c_1b_1}{r^2}}\approx\frac{r}{\sqrt{C}}\left(1-\frac{c_1b_1}{r^2}\right)\quad\mbox{ as }r\to\infty
\end{align}
for some constant $C>0$ depending on $r_0$ where $b_1$ is as given by \eqref{r-h-infty-limit} and
$c_1=\frac{n-4}{n-1}$. Hence by 
\eqref{a-t-relation} and \eqref{h-asymptotic-near-infty},
\begin{equation*}
t=\int_0^{a(t)}\frac{d\rho}{\sqrt{h(\rho^2)}}\approx \frac{1}{\sqrt{C}}\int_{r_0}^{a(t)}\left(\rho-\frac{c_1b_1}{\rho}\right)\,d\rho\approx \frac{1}{2\sqrt{C}}\left(a(t)^2-c_1b_1\log a(t)^2\right)\quad\mbox{ as }a(t)\to\infty
\end{equation*}
where $r_0>0$ is a sufficiently large constant.
Thus $t\to\infty$ as $a(t)\to\infty$ and the metric $g$ given by \eqref{g-rotational-form} or equivalent by \eqref{g=h-form} with $a(t)$ and $h(a(t))$ related by \eqref{a-t-relation} where $h$ is a solution of \eqref{h-steady-soliton-eqn} with $\mu_1<0$ is a complete metric.
\end{rmk}

\section{Asymptotic behaviour of rotationally symmetric expanding gradient Ricci soliton}
\setcounter{equation}{0}
\setcounter{thm}{0}

In this section we will prove the asymptotic behaviour of the solution $h$ of \eqref{h-ode-initial-value-problem} for the case $n\ge 2$ and $\lambda>0$ as $r\to\infty$. We will assume that $\lambda>0$, $\mu_1\in\mathbb{R}\setminus\{0\}$, $n\ge 2$ and $h\in C^2((0,\infty))\cap C^1([0,\infty))$ is a solution of \eqref{h-ode-initial-value-problem}  in this section.

\noindent{\bf Proof of Theorem \ref{expanding-soln-asymptotic-hr-behaviour-thm}}:

\noindent We first observe that  by \eqref{h-derivative-integral-formula0} of
Lemma \ref{h-lower-bd-lem},
\begin{equation}\label{hr-eqn21}
h_r(r)=\frac{n-1}{r}+\lambda+\sqrt{\frac{h(r)}{h(r_1)}}H_1(r_1,r)\le\frac{n-1}{r}+\lambda+\sqrt{\frac{h(r)}{h(r_1)}}H_2(r_1)\quad\forall r>r_1>0
\end{equation}
where 
\begin{equation}\label{H1-defn}
H_1(r_1,r)=h_r(r_1)-\frac{n-1}{r_1}+\frac{(n-1)\sqrt{h(r_1)}}{2}\int_{r_1}^r\frac{h(\rho)+1}{\rho^2\sqrt{h(\rho)}}\,d\rho-\lambda\quad\forall r>r_1>0
\end{equation}
and
\begin{equation}\label{H2-defn}
H_2(r_1)=h_r(r_1)+\frac{(n-1)\sqrt{h(r_1)}}{2}\int_{r_1}^{\infty}\frac{h(\rho)+1}{\rho^2\sqrt{h(\rho)}}\,d\rho-\lambda\quad\forall r_1>0.
\end{equation}
On the other hand by Theorem \ref{h-steady-soln-property-thm1} \eqref{hr-sign} holds. Hence by \eqref{hr-sign},
\begin{equation*}
h_{\infty}:=\lim_{r\to\infty}h(r)\in [0,\infty]\quad\mbox{ exists.}
\end{equation*} 

\noindent{\bf Proof of (i)}: By direct computation \eqref{h-uniform-upper-lower-bd31} is the 
explicit analytic solution of \eqref{h-ode-initial-value-problem}.

\noindent{\bf Proof of (ii)}: Suppose $0<\mu_1<\lambda/n$. Let $\mu_1<\delta<\lambda/n$.
By \eqref{h-ode-initial-value-problem} there exists a constant $s_0>0$ such that
\begin{equation*}
h_r(r)<\delta\quad\forall 0\le r<s_0.
\end{equation*}
Let
\begin{equation*}
R_0=\sup\{r_1>0:h_r(r)<\delta\quad\forall 0\le r<r_1\}.
\end{equation*}
Then $R_0\ge s_0$. Suppose $R_0<\infty$. Then
\begin{align}
&h_r(r)<\delta\quad\forall 0\le r<R_0\quad\mbox{ and }\quad h_r(R_0)=\delta\label{hr-upper-bd5}\\
\Rightarrow\quad&1<h(r)<1+\delta r\quad\forall 0<r\le R_0\quad\mbox{ and }\quad h_{rr}(R_0)\ge 0.\label{h-upper-bd5-hrr-positive}
\end{align}
By \eqref{h-ode-initial-value-problem}, \eqref{hr-upper-bd5} and \eqref{h-upper-bd5-hrr-positive},
\begin{align*}
&2R_0^2h(R_0)h_{rr}(R_0)<(n-1)\left(1+\delta R_0\right)\delta R_0+\delta R_0\left(\delta R_0-\lambda R_0-(n-1)\right)=\delta R_0^2(n\delta-\lambda)<0\notag\\
\Rightarrow\quad&h_{rr}(R_0)<0
\end{align*}
which contradicts \eqref{h-upper-bd5-hrr-positive}. Hence $R_0=\infty$. Thus by letting $\delta\searrow\mu_1$ in \eqref{hr-upper-bd5} and \eqref{h-upper-bd5-hrr-positive},
\begin{equation}\label{h-hr-upper-bd}
0<h_r(r)\le\mu_1\quad\mbox{ and }\quad 1<h(r)\le 1+\mu_1r\quad\forall r>0.
\end{equation}
Thus $h_{\infty}>1$ and \eqref{h-uniform-upper-lower-bd32} follows. 
Suppose 
\begin{equation}\label{h-infty}
h_{\infty}=\infty.
\end{equation}
By \eqref{h-hr-upper-bd} we can divide the proof of (ii) into two cases.

\noindent $\underline{\text{\bf Case 1}}$: $\lim_{r\to\infty}h_r(r)=0$.

\noindent Let $0<\delta_0<\frac{\lambda}{2n}$. Then there exists a constant $r_0>1$ such that 
\begin{align}
&h_r(r)<\delta_0\quad\forall r\ge r_0\label{hr-upper-bd21}\\
\Rightarrow\quad&1<h(r)<h(r_0)+\delta_0 r\quad\forall r\ge r_0.\label{h-upper-bd21}
\end{align}
Hence by \eqref{h-upper-bd21} for any $r_1>r_0$,
\begin{align}\label{h-h-integral}
\frac{\sqrt{h(r_1)}}{2}\int_{r_1}^{\infty}\frac{h(\rho)+1}{\rho^2\sqrt{h(\rho)}}\,d\rho
\le&(h(r_0)+\delta_0r_1)^{1/2}\int_{r_1}^{\infty}\frac{\sqrt{h(\rho)}}{\rho^2}\,d\rho\notag\\
\le&(h(r_0)+\delta_0r_1)^{1/2}(h(r_0)r_1^{-1}+\delta_0)^{1/2}\int_{r_1}^{\infty}\rho^{-3/2}\,d\rho\notag\\
\le&2(h(r_0)+\delta_0r_1)^{1/2}(h(r_0)r_1^{-1}+\delta_0)^{1/2}r_1^{-1/2}.
\end{align}
By \eqref{H2-defn} and \eqref{h-h-integral},
\begin{equation}\label{H2-limit}
\limsup_{r_1\to\infty}H_2(r_1)\le 2(n-1)\delta_0-\lambda<0.
\end{equation}
By \eqref{H2-limit} there exists a constant $r_1>r_0$ such that
\begin{equation}\label{Hr2-negative}
H_2(r_1)<0.
\end{equation}
By \eqref{hr-eqn21}, \eqref{h-infty} and \eqref{Hr2-negative},
\begin{equation*}
\limsup_{r\to\infty}h_r(r)=-\infty
\end{equation*}
which contradicts \eqref{hr-sign}. Hence case 1 does not hold.

\noindent $\underline{\text{\bf Case 2}}$: There exists a constant $\delta_1>0$ and a sequence $\{s_i\}_{i=1}^{\infty}\subset (1,\infty)$, $s_i\to\infty$ as $i\to\infty$, such that 
\begin{equation}\label{hr-positive6}
h_r(s_i)\ge\delta_1\quad\forall i\in\Z^+.  
\end{equation}

\noindent By \eqref{h-hr-upper-bd} and \eqref{hr-positive6} the sequence $\{s_i\}_{i=1}^{\infty}$ has a subsequence which we may assume without loss of generality to be the sequence itself such that
\begin{equation}\label{hr-sequence-limit-positive}
h_r(s_i)\to\delta_2\quad\mbox{ as }i\to\infty  
\end{equation}
for some constant $\delta_2\in (0,\mu_1]$. By \eqref{h-hr-upper-bd},
\begin{align}\label{h-integral-limit}
\int_{s_i}^{\infty}\frac{h(\rho)+1}{\rho^2\sqrt{h(\rho)}}\,d\rho
\le&2\int_{s_i}^{\infty}\frac{\sqrt{h(\rho)}}{\rho^2}\,d\rho
\le 2(1+\mu_1)^{1/2}\int_{s_i}^{\infty}\rho^{-3/2}\,d\rho
\le 4(1+\mu_1)^{1/2}s_i^{-1/2}\quad\forall i\in\Z^+\notag\\
\to&0\quad\mbox{ as }i\to\infty.
\end{align}
By \eqref{h-infty},   \eqref{hr-sequence-limit-positive}, \eqref{h-integral-limit} and the l'Hospital rule,
\begin{align}\label{h-integral-ratio-limit}
\lim_{i\to\infty}\frac{\sqrt{h(s_i)}}{2}\int_{s_i}^{\infty}\frac{h(\rho)+1}{\rho^2\sqrt{h(\rho)}}\,d\rho
=&\lim_{i\to\infty}\frac{\int_{s_i}^{\infty}\frac{h(\rho)+1}{2\rho^2\sqrt{h(\rho)}}\,d\rho}{h(s_i)^{-1/2}}
=\lim_{i\to\infty}\frac{\frac{h(s_i)+1}{s_i^2\sqrt{h(s_i)}}}{h(s_i)^{-3/2}h_r(s_i)}\notag\\
=&\delta_2^{-1}\lim_{i\to\infty}\frac{h(s_i)}{s_i}\cdot\lim_{i\to\infty}\frac{h(s_i)+1}{s_i}\notag\\
=&\delta_2^{-1}\lim_{i\to\infty}h_r(s_i)\cdot\lim_{i\to\infty}h_r(s_i)\notag\\
=&\delta_2.
\end{align}
Putting $r_1=s_i$, $r=\infty$, in \eqref{H1-defn} and letting $i\to\infty$, by \eqref{hr-sequence-limit-positive} and \eqref{h-integral-ratio-limit},
\begin{equation*}
\lim_{i\to\infty}H_1(s_i,\infty)=\delta_2+(n-1)\delta_2-\lambda=n\delta_2-\lambda\le n\mu_1-\lambda<0.
\end{equation*}
Hence there exists $i_0\in\Z^+$ such that
\begin{equation}\label{Hsi-negative}
H_1(s_{i_0},\infty)<0.
\end{equation}
Putting $r_1=s_{i_0}$ in \eqref{hr-eqn21} and letting $r\to\infty$, by \eqref{hr-eqn21}, \eqref{h-infty} and \eqref{Hsi-negative},
\begin{equation*}
\limsup_{r\to\infty}h_r(r)=-\infty
\end{equation*}
which contradicts \eqref{hr-sign}. Hence case 2 does not hold. Thus \eqref{h-infty} does not hold and 
$h_{\infty}<\infty$. Putting $r_1=1$ and letting $r\to\infty$ in \eqref{h-derivative-integral-formula0},
by \eqref{h-hr-upper-bd},
\begin{equation}\label{hr-limit7}
\lim_{r\to\infty}h_r(r)=\lambda+\sqrt{\frac{h_{\infty}}{h(1)}}\left(h_r(1)-(n-1)-\lambda\right)+(n-1)\frac{\sqrt{h_{\infty}}}{2}\int_1^{\infty}\frac{h(\rho)+1}{\rho^2\sqrt{h(\rho)}}\,d\rho\in [0,\mu_1].
\end{equation}
Since $h_{\infty}<\infty$, by \eqref{hr-limit7},
\begin{equation*}
\lim_{r\to\infty}h_r(r)=0
\end{equation*}
and (ii) follows.

\noindent{\bf Proof of (iii)}: Suppose $\mu_1>\lambda/n$. Let $\mu_1>\delta>\lambda/n$.
By \eqref{h-ode-initial-value-problem} there exists a constant $s_0'>0$ such that
\begin{equation*}
h_r(r)>\delta\quad\forall 0\le r<s_0'.
\end{equation*}
Let
\begin{equation*}
R_0=\sup\{r_1>0:h_r(r)>\delta\quad\forall 0\le r<r_1\}.
\end{equation*}
Then $R_0\ge s_0'$. Suppose $R_0<\infty$. Then
\begin{align}
&h_r(r)>\delta\quad\forall 0\le r<R_0\quad\mbox{ and }\quad h_r(R_0)=\delta\label{hr-lower-bd5}\\
\Rightarrow\quad&h(r)>1+\delta r\quad\forall 0<r\le R_0\quad\mbox{ and }\quad h_{rr}(R_0)\le 0.\label{h-lower-bd5-hrr-negative}
\end{align}
By \eqref{h-ode-initial-value-problem}, \eqref{hr-lower-bd5} and \eqref{h-lower-bd5-hrr-negative},
\begin{align*}
&2R_0^2h(R_0)h_{rr}(R_0)>(n-1)\left(1+\delta R_0\right)\delta R_0+\delta R_0\left(\delta R_0-\lambda R_0-(n-1)\right)=\delta R_0^2(n\delta-\lambda)>0\notag\\
\Rightarrow\quad&h_{rr}(R_0)>0
\end{align*}
which contradicts \eqref{h-lower-bd5-hrr-negative}. Hence $R_0=\infty$. Thus by letting $\delta\nearrow\mu_1$ in \eqref{hr-lower-bd5} and \eqref{h-lower-bd5-hrr-negative},
\begin{align}
&h_r(r)\ge\mu_1\quad\mbox{ and }\quad h(r)\ge 1+\mu_1r\quad\forall r>0\label{h-hr-lower-bd}\\
\Rightarrow\quad&h_{\infty}=\lim_{r\to\infty}h(r)=\infty.\notag
\end{align}
Hence \eqref{h-uniform-upper-lower-bd33} holds. By \eqref{h-hr-lower-bd},
\begin{equation}\label{h-integral-lower-bd7}
\sqrt{h(r_1)}\int_{r_1}^{\infty}\frac{h(\rho)+1}{2\rho^2\sqrt{h(\rho)}}\,d\rho
\ge\frac{(1+\mu_1r_1)^{1/2}\sqrt{\mu_1}}{2}\int_{r_1}^{\infty}\rho^{-3/2}\,d\rho
\ge\mu_1\quad\forall r_1>0.
\end{equation}
By  \eqref{H1-defn}, \eqref{h-hr-lower-bd} and \eqref{h-integral-lower-bd7},
\begin{equation*}
\liminf_{r_1\to\infty}H_1(r_1,\infty)\ge\mu_1+(n-1)\mu_1-\lambda=n\mu_1-\lambda>0.
\end{equation*}
Hence there exists a constant $r_1>0$ such that
\begin{equation*}
H_1(r_1,\infty)>0.
\end{equation*}
Thus there exists a constant 
$r_0'>r_1$ such that
\begin{equation}\label{H1-strictly-positive}
H_1(r_1,r)\ge H_1(r_1,r_0')>0\quad\forall r\ge r_0'.
\end{equation}
Letting $r\to\infty$ in \eqref{hr-eqn21}, by \eqref{H1-strictly-positive},
\begin{equation*}
\lim_{r\to\infty}h_r(r)=\infty
\end{equation*}
and (iii) follows.

\noindent{\bf Proof of (iv)}: Suppose $h_{\infty}=0$. Then by \eqref{hr-eqn21},
\begin{align*}\label{hr-eqn23}
&h_r(r)\ge\frac{n-1}{r}+\lambda+\sqrt{\frac{h(r)}{h(r_1)}}\left(h_r(r_1)-\frac{n-1}{r_1}-\lambda\right)\quad\forall r>r_1>0\\
\Rightarrow\quad&\liminf_{r\to\infty}h_r(r)\ge\lambda>0
\end{align*}
which contradicts \eqref{hr-sign}. Hence by \eqref{hr-sign} $h_{\infty}\in (0,1)$. Thus,
\begin{equation}\label{h-integral-bd15}
\int_1^{\infty}\frac{h(\rho)+1}{\rho^2\sqrt{h(\rho)}}\,d\rho<\infty.
\end{equation}
Putting $r_1=1$ and letting $r\to\infty$ in \eqref{h-derivative-integral-formula0}, by \eqref{hr-sign} and \eqref{h-integral-bd15},
\begin{equation*}
h_r^{\infty}:=\lim_{r\to\infty}h_r(r)=\lambda+\sqrt{\frac{h_{\infty}}{h(1)}}\left(h_r(1)-(n-1)-\lambda\right)+(n-1)\frac{\sqrt{h_{\infty}}}{2}\int_1^{\infty}\frac{h(\rho)+1}{\rho^2\sqrt{h(\rho)}}\,d\rho\in (-\infty,0].
\end{equation*}
If $h_r^{\infty}<0$, then there exists a constant $r_0>0$  such that
\begin{align*}
&h_r(r)\le h_r^{\infty}/2<0\quad\forall r\ge r_0\notag\\
\Rightarrow\quad&h(r)\le h(r_0)+(r-r_0)(h_r^{\infty}/2)\le 1+(r-r_0)(h_r^{\infty}/2)<0\quad\forall r>r_0+(2/|h_r^{\infty}|)
\end{align*}
which  contradicts \eqref{hr-sign}. Hence
\begin{equation*}
h_r^{\infty}:=\lim_{r\to\infty}h_r(r)=0
\end{equation*}
and (iv) follows.

{\hfill$\square$\vspace{6pt}}

Finally by an argument similar to the proof of (iii) of Theorem \ref{expanding-soln-asymptotic-hr-behaviour-thm} we get that Theorem \ref{steady-soln-asymptotic-behaviour-for-hr-positive-thm} holds.

\end{document}